\documentclass{article}

 \usepackage[preprint]{neurips_2026}

\usepackage[utf8]{inputenc} 
\usepackage[T1]{fontenc}    
\usepackage{url}            
\usepackage{booktabs}       
\usepackage{amsfonts}       
\usepackage{nicefrac}       
\usepackage{microtype}      
\usepackage{xcolor}         
\usepackage{comment}
\usepackage{csquotes}

\usepackage{amsmath}
\usepackage{algorithm}
\usepackage{algpseudocode}
\usepackage{graphicx}
\usepackage{amsthm}
\usepackage{amssymb}
\usepackage{thmtools}
\usepackage{thm-restate}
\usepackage[colorlinks=true, allcolors=blue]{hyperref}
\usepackage{subcaption}

\usepackage{enumitem} 

\newtheorem{theorem}{Theorem}
\newtheorem{lemma}{Lemma}

\newtheorem{remark}{Remark}
\newtheorem*{theorem*}{Theorem}
\newtheorem*{lemma*}{Lemma}

\usepackage[colorinlistoftodos,bordercolor=orange,backgroundcolor=orange!20,linecolor=orange,textsize=scriptsize]{todonotes}

\newcommand{\bi}{\begin{itemize}}
\newcommand{\ei}{\end{itemize}}

\title{FlexGrad: \\ A Root-Free Approach to Adaptive Step Sizes}

\author{%
  Pablo Barros \\
  Fundação Getulio Vargas \\
  \And
  Aaron Defazio \\
  Fundamental AI Research Team, Meta \\
  \And
  Vincent Guigues \\
  Fundação Getulio Vargas
}

\begin{document}

\maketitle

\begin{abstract}
We introduce the FlexGrad optimizer, an adaptive gradient method that addresses a key limitation of classical AdaGrad-type methods: their inability to exploit correlations between gradients. By scaling steps using both cumulative gradients and cumulative squared gradients, FlexGrad preserves AdaGrad-like behavior in noisy regimes while allowing substantially larger steps when gradient directions are persistent. The method is fully online, horizon-free, and avoids any auxiliary search procedure during optimization. For convex Lipschitz objectives, we prove that FlexGrad attains the optimal asymptotic convergence rate for nonsmooth convex optimization, with guarantees for both global and coordinate-wise variants. FlexGrad is straightforward to implement and is closely connected to practical gradient-normalized methods. Experiments on a range of convex and nonconvex problems show that FlexGrad performs competitively across a range of optimization settings.
\end{abstract}

\section{Introduction}

Adaptive optimization methods form the foundation on which modern machine learning is built. Understanding and addressing their limitations is crucial to putting the field on more solid ground. This work focuses on one key limitation of classical AdaGrad-family \citep{adagrad} optimizers: \emph{their blindness to correlations between gradients.} AdaGrad adapts the global learning rate $\gamma$ to each coordinate $i$ by using the sum of the squares of gradients for that coordinate:
\[
    \gamma_{k+1,i} = \frac{\gamma}{\sqrt{\sum_{j=0}^{k} g^2_{j,i}}}.
\]
This update scales the step size down when gradients are large, even when far from the solution, which can slow down training. We propose the use of a more flexible step size that does not use a square-root on the denominator:
\[
\gamma_{k+1,i}=\frac{\gamma\left|\sum_{j=0}^{k}g_{j,i}\right|}{\sum_{j=0}^{k}g_{j,i}^{2}}.
\]
This scaling maintains the desirable properties of the AdaGrad step size, decreasing at a $1/\sqrt{k}$ rate when the gradients are noisy or near the optimum, while maintaining larger step sizes when gradient signal is strong and consistent.
\begin{description}
    \item[Uncorrelated Gradients] When consecutive gradients are uncorrelated, the absolute value in the numerator behaves like a random-walk, growing as $\sqrt{\sum_{j=0}^{k}g_{j,i}^{2}}$, resulting in a similar step size to conventional AdaGrad. \[
\gamma_{k+1,i}=\frac{\gamma\left|\sum_{j=0}^{k}g_{j,i}\right|}{\sum_{j=0}^{k}g_{j,i}^{2}}\approx\frac{\gamma\sqrt{\sum_{j=0}^{k}g_{j,i}^{2}}}{\sum_{j=0}^{k}g_{j,i}^{2}}=\frac{\gamma}{\sqrt{\sum_{j=0}^{k}g_{j,i}^{2}}}
    \]
    \item[Correlated Gradients] When gradients are correlated, the numerator becomes larger, resulting in much larger steps (Figure~\ref{fig:flex-example}) and a non-decreasing step size:
\[
\gamma_{k+1,i}=\frac{\gamma\left|\sum_{j=0}^{k}g_{j,i}\right|}{\sum_{j=0}^{k}g_{j,i}^{2}}\approx\frac{\gamma(k+1)G}{(k+1)G^{2}}=\frac{\gamma}{G}.
\]
\end{description}

This approach is not merely heuristic; we establish a strong convergence theory for both coordinate-wise and global versions of this method.

The paper is organized as follows. Section~\ref{sec:literature} reviews the related literature, and Section~\ref{sec:motivation} motivates FlexGrad. Section~\ref{sec:convergence-theory} introduces FlexGrad and its coordinate-wise variant, and proves their convergence. Section~\ref{sec:ema} develops Exponential-Moving-Average (EMA) variants. Finally, Section~\ref{sec:numsim} reports numerical experiments on standard machine learning problems, comparing FlexGrad, coordinate-wise FlexGrad, their EMA variants, AdaGrad \cite{adagrad}, Adam \cite{kingma2014adam}, BCos , and GradaGrad \cite{defazio2022grad}.

\begin{figure}
    \centering
    \includegraphics[width=1\linewidth]{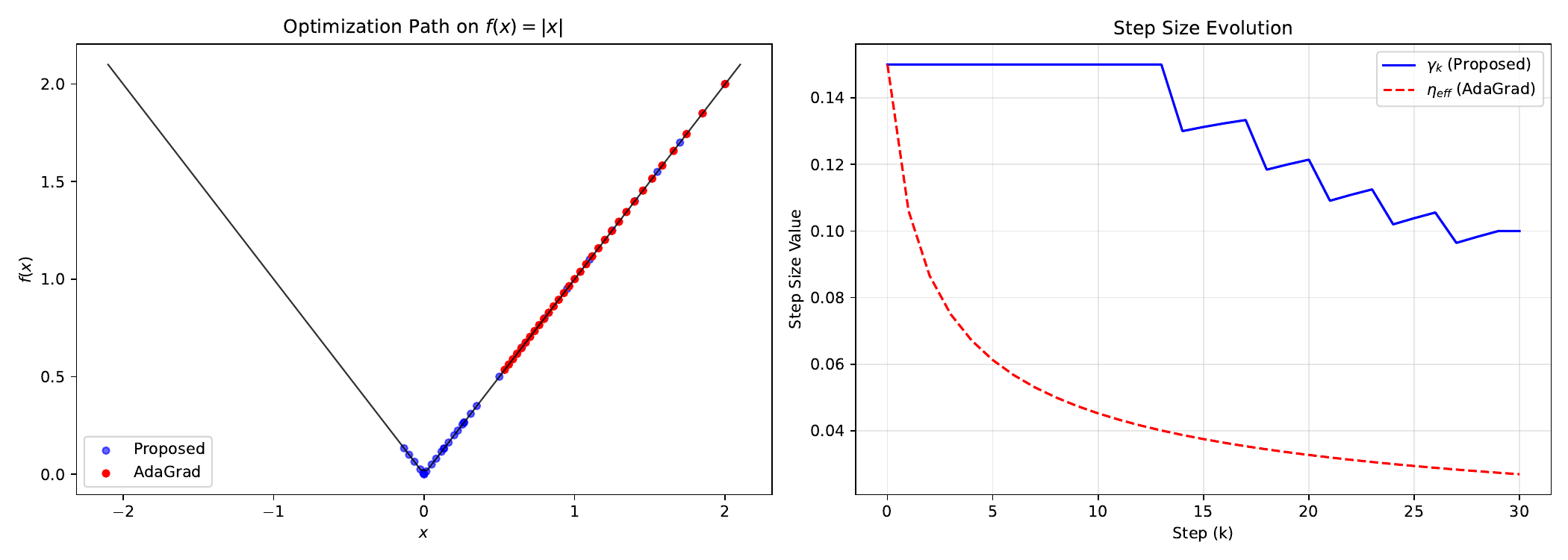}
    \caption{The FlexGrad optimizer maintains a larger step size when gradients are correlated.}
    \label{fig:flex-example}
\end{figure}

\section{Related Work}\label{sec:literature}

Since the development of the AdaGrad method \citep{adagrad}, a number of directions have been explored under the broad scope of adaptive optimization. These methods differ in the degree of adaptivity, the quantity adapted to, and the practicality of the approach. 

Coin-betting approaches \citep{orabona2017training} attempt to adapt to the initial distance-to-solution $D$, but the resulting methods lose adaptivity to the gradient norm sequence, becoming sensitive to outliers and significantly varying gradient norm sequences, which hurts their practical performance. Coin-betting techniques have intriguing theoretical properties. The resulting step sizes are non-monotonic, and can grow and shrink more rapidly than other known adaptive methods. Recent work introduces exponential-moving-averages into the formulation, and makes the methods more modular, resulting in better practical performance \citep{cutkosky2023mechanic}.

Since fully non-monotonic estimation of $D$ can be unstable, another family of approaches uses monotonically increasing estimates of $D$, coupled with standard gradient-norm-adaptivity techniques. D-Adaptation \citep{defazio2023dadapt} maintains a lower bound on $D$ that is updated at every step, asymptotically approaching within a constant factor of the true $D$. The weighted Prodigy \citep{mishchenko2023prodigy} variant of D-Adaptation has demonstrated state-of-the-art learning rate adaptation across a large number of problem classes. However, its estimate is only for the global learning rate, and no coordinate-wise version is known to work. Other methods in this class include DoG \citep{ivgi2023dog} and DoWG \citep{khaled2023dowg}, which estimate $D$ using a maximum of the iterate distances seen so far $\displaystyle \max_{i<k}\|x_i - x_0\|$.

Coin-betting and D-Adaptation methods share a property: their adaptivity makes use of inner products of the current gradient with sums of past gradients. Many methods have been developed that try to make explicit use of such quantities to define a step size. Historically, the inner-product of two consecutive gradients was the main approach, defining the class of \emph{hyper-gradient methods}. The name refers to the fact that for a step like $x_{k+1}=x_k - \eta_k g_k$ the gradient with respect to the step size $\eta_k$ is given by this inner-product \citep{mirzoakhmedov1983adaptive}:
\[
\frac{\partial f(x_{k+1})}{\partial\eta_{k}}=\left\langle g_{k+1},g_{k}\right\rangle.
\]
Recent explorations of hyper-gradient methods in the context of machine learning include \citet{baydin2017online} and \citet{mahmood2012tuning}. Like the methods that adapt to $D$, these approaches do not have practical coordinate-wise variants.

The GradaGrad method \citep{defazio2022grad} incorporates this gradient product into the denominator of an AdaGrad-like step size, yielding a degree of non-monotonic flexibility:
\[
\frac{D}{\sqrt{\sum_{t=0}^{k}\left(g_{i,t}^{2}-\rho g_{i,j}g_{i,t-1}\right)}},
\]
for coordinate $i$. This approach potentially yields negative values under the square-root, and a number of tricks are needed to handle small or potentially negative values to prevent blow-up. Our approach avoids this issue by not using a square-root in the denominator, and introducing non-monontone behavior by an increasing numerator instead. The EMA variant of our approach is related to the BCOS method \cite{jiang2025bcos}. Their proposed method is developed as a simplified variant of the update we suggest.

\section{Motivation}\label{sec:motivation}

Our proposed approach arises from the goal of adapting D-Adaptation's global learning rate estimation approach to the coordinate-wise case. D-Adaptation uses the inner product of the current gradient and the sum of past gradients $s_{k+1} := \sum_{j=0}^{k} g_j$ to estimate $D$:
\[
\hat{d}_{k+1}=\frac{\sum_{i=0}^{k}d_{i}\gamma_{i}\left\langle g_{i},s_{i}\right\rangle }{\left\Vert s_{k+1}\right\Vert }
\]
where $d_i$ is the maximum of the previous estimates at step $i$. The denominator of the step size (here folded into $\gamma$) is taken to be the AdaGrad denominator.

This $D$ estimate can grow exponentially fast, which is beneficial for global learning rate estimation, but potentially problematic when applied coordinate-wise, as such rapid changes can be fragile. Additionally, D-Adaptation's theory relies crucially on the use of function-value gap positivity ($f(x_k) - f_* > 0$) which has no obvious coordinate-wise analog, and the method fails almost immediately when applied coordinate-wise.

The FlexGrad optimizer's global version uses the same inner product not to estimate $D$, but to modify the gradient-squared accumulation performed in AdaGrad to bring about a more flexible adaptation. The $g$-to-$s$ inner product arises if we consider the expansion of the numerator:
\begin{align*}
\gamma_{k+1}
&=\frac{D\left\Vert \sum_{i=0}^{k}g_{i}\right\Vert}{\sum_{i=0}^{k}\left\Vert g_{i}\right\Vert^{2}}
=\frac{D\sqrt{\left\Vert \sum_{i=0}^{k-1}g_{i}+g_{k}\right\Vert^{2}}}{\sum_{i=0}^{k}\left\Vert g_{i}\right\Vert^{2}}\\
&=\frac{D\sqrt{\left\Vert \sum_{i=0}^{k-1}g_{i}\right\Vert^{2}
+2\left\langle s_{k},g_{k}\right\rangle+\left\Vert g_{k}\right\Vert^{2}}}{\sum_{i=0}^{k}\left\Vert g_{i}\right\Vert^{2}}\\
&\dots\\
&=\frac{D\sqrt{2\sum_{i=0}^{k}\left\langle s_i,g_i\right\rangle+\sum_{i=0}^{k}\|g_i\|^2}}{\sum_{i=0}^{k}\left\Vert g_{i}\right\Vert^{2}}.
\end{align*}
When $\sum_{i=0}^{k}\left\langle s_{i},g_{i}\right\rangle=0$ the step size collapses to the AdaGrad step, so the sign of this inner product quantity determines whether the step size is smaller or larger than AdaGrad. The step size is scaled up when this inner product is positive, and down when it is negative.

The D-Adaptation theory directly motivates this choice of step size if we consider the simplified setting of fixed-in-advance step sizes. Suppose we use a step size of the form
\[
\gamma_{\text{FlexGrad}}=\frac{D\|s_{k+1}\|}{\sum_{i=0}^{k}\|g_{i}\|^{2}}.
\]
D-Adaptation theory arises from the following key inequality \citep[Lemma 6]{defazio2023dadapt}:
\[
\sum_{i=0}^{k}(f(x_{i})-f_{\star})\le D\|s_{k+1}\|-\frac{\gamma}{2}\|s_{k+1}\|^{2}+\frac{\gamma}{2}\sum_{i=0}^{k}\|g_{i}\|^{2}.
\]
Substituting in $\gamma_{\text{FlexGrad}}$ gives:
\begin{equation}\label{dadapflexgad}
\sum_{i=0}^{k}(f(x_{i})-f_{\star})\le\frac{3}{2}D\|s_{k+1}\|-\frac{1}{2}D\frac{\|s_{k+1}\|^{3}}{\sum_{i=1}^{k}\|g_{i}\|^{2}}.
\end{equation}
We can maximize the right-hand size over $u=\|s_{k+1}\|\ge0$
to find the worse-case upper bound. So let us take the derivative with
respect to $u$ and set it equal to zero:
\[
\frac{\partial}{\partial u}\left(\frac{3}{2}Du-\frac{1}{2}D\frac{u^{3}}{\sum_{i=1}^{k}\|g_{i}\|^{2}}\right)=\frac{3}{2}D-\frac{3}{2}D\frac{u^{2}}{\sum_{i=1}^{k}\|g_{i}\|^{2}}=0
\]
\[
\therefore u=\sqrt{\sum_{i=1}^{k}\|g_{i}\|^{2}}.
\]
Plugging this in \eqref{dadapflexgad}  gives the final convergence rate bound:
\[
\sum_{i=0}^{k}(f(x_{i})-f_{\star})\le D\sqrt{\sum_{i=0}^{k}\|g_{i}\|^{2}}.
\]
This bound exactly matches the convergence rate of adaptive methods for non-smooth convex optimization. Note that this worst-case performance happens when $u=\|s_{k+1}\|= \sqrt{\sum_{i=1}^{k}\|g_{i}\|^{2}}$, which is precisely when the 
$g$-to-$s$ inner product term is $0$. Outside of the worst-case, the convergence rate bound can potentially be much better than the AdaGrad bound. In Section~\ref{sec:convergence-theory} we establish full convergence theory when the step size is not fixed in advance, for both the global variant and coordinate-wise versions of the method.

\section{Convergence Theory}
\label{sec:convergence-theory}

We now turn to the theoretical analysis of FlexGrad in the convex nonsmooth setting. Our goal is to show that, despite its more flexible and potentially non-monotone scaling, the method retains the same worst-case order as standard adaptive subgradient methods. At the same time, the bounds make explicit how FlexGrad departs from AdaGrad by responding to cumulative gradient alignment through the quantity \(s_{k+1}\).

We work in a weighted framework with coefficients \((\lambda_k)_{k\ge0}\), which includes the basic method as the special case \(\lambda_k\equiv 1\). This formulation is convenient both for the analysis and for later comparisons with other adaptive schemes.
Our FlexGrad optimizer is given in Algorithm \ref{algo1}
below.

\begin{algorithm}[h!]
\caption{FlexGrad}
\label{algo1}
\begin{flushleft}
\textbf{Input:} $x_0\in\mathbb R^d$, \, $(\lambda_k)_{k\ge0}$ with $\lambda_k\ge0$ and $\lambda_0>0$, \, $\widehat D \ge \|x_0-x_*\|.$ \\
$s_0=0$, \, $g_0\in\partial f(x_0)$ \\
If $g_0=0$, exit with $\hat x_n=x_0$ \\
\textbf{for} $k=0$ \textbf{to} $n-1$ \textbf{do} \\
\quad $g_k\in\partial f(x_k)$ \\
\quad $s_{k+1}=s_k+\lambda_k g_k$ \\
\quad $\displaystyle \gamma_{k+1}=\frac{\widehat D\|s_{k+1}\|}{\sum_{i=0}^{k}\lambda_i^2\|g_i\|^2}$ \\
\quad $x_{k+1}=x_0-\gamma_{k+1}s_{k+1}$ \\
\textbf{end for} \\
Return $\displaystyle \hat x_n=\frac{1}{\sum_{k=0}^n \lambda_k}\sum_{k=0}^n \lambda_k x_k$
\end{flushleft}
\end{algorithm}

Our first result shows that FlexGrad satisfies an adaptive convergence guarantee of the same overall form as those known for AdaGrad-type methods, with an additional factor that quantifies the irregularity of the gradient sequence under the FlexGrad scaling.

\begin{theorem}\label{thm:weighted}
In Algorithm \ref{algo1}, for every \(n\ge1\),
\[
f(\hat x_n)-f_*
\le
14\widehat D(1+\rho_n)
\frac{\sqrt{\sum_{k=0}^{n}\lambda_k^2 \|g_k\|^2}}{\sum_{k=0}^n \lambda_k},
\]
where
\[
\rho_n:=
\max_{1\le k\le n}
\frac{\lambda_k^2\|g_k\|^2}{\sum_{i=0}^{k-1}\lambda_i^2\|g_i\|^2}.
\]
\end{theorem}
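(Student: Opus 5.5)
The proof follows the heuristic of Section~\ref{sec:motivation}, but for the actual, non-monotone sequence $\gamma_k$. Write $a_k:=\|s_k\|$ and $G_k:=\sum_{i=0}^k\lambda_i^2\|g_i\|^2$. Then $\gamma_{k}=\widehat D a_{k}/G_{k-1}$ for $k\ge1$, and $G_k\le(1+\rho_n)G_{k-1}$ for $1\le k\le n$.

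\textbf{Step 1: reduce to the scalar sequences $a_k$, $G_k$.} By convexity and Jensen,
\[
\Big(\sum_{k=0}^n\lambda_k\Big)\big(f(\hat x_n)-f_*\big)\le\sum_{k=0}^n\lambda_k\langle g_k,x_k-x_*\rangle .
\]
Split $x_k-x_*=(x_0-x_*)+(x_k-x_0)$. The first piece contributes $\langle s_{n+1},x_0-x_*\rangle\le\widehat D a_{n+1}$. For the second, $x_k-x_0=-\gamma_k s_k$, and the term with $k=0$ vanishes since $s_0=0$. Use the identity
\[
2\lambda_k\langle g_k,s_k\rangle=a_{k+1}^2-a_k^2-\lambda_k^2\|g_k\|^2 .
\]
This gives
\[
\sum_{k=0}^n\lambda_k\big(f(x_k)-f_*\big)\le \widehat D a_{n+1}+\frac{\widehat D}{2}\sum_{k=1}^n\frac{a_k}{G_{k-1}}\Big(\lambda_k^2\|g_k\|^2+a_k^2-a_{k+1}^2\Big).
\]
The same bound holds with $n$ replaced by any $m\le n$. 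Its left-hand side is nonnegative, and that fact will be used in Step 3.

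\textbf{Step 2: a cubic potential to handle non-monotonicity.} The cross term $a_k(a_k^2-a_{k+1}^2)$ has no fixed sign, so I will compare it with $\tfrac23(a_k^3-a_{k+1}^3)$. The exact identity is
\[
a(a^2-b^2)=\tfrac23(a^3-b^3)+\tfrac13(a-b)^2(a+2b).
\]
Since $|a_{k+1}-a_k|\le\lambda_k\|g_k\|$, the remainder is at most $\lambda_k^2\|g_k\|^2(a_k+2a_{k+1})/3$. This is of the same type as the $\lambda_k^2\|g_k\|^2 a_k$ term already present, with $a_{k+1}\le a_k+\lambda_k\|g_k\|$.

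Sum $\sum_k(a_k^3-a_{k+1}^3)/G_{k-1}$ by parts. Because $G$ is increasing, the interior terms $a_k^3(1/G_{k-1}-1/G_{k-2})$ are nonpositive and can be dropped. What remains is a boundary term $-a_{n+1}^3/G_{n-1}\le -a_{n+1}^3/G_n$ and an initial term controlled by $a_1^3/G_0=\lambda_0\|g_0\|$. The bound then takes the form
\[
\widehat D\Big(a_{n+1}-\tfrac13\frac{a_{n+1}^3}{G_n}\Big)+C\widehat D\sum_{k=1}^n\frac{\lambda_k^2\|g_k\|^2\,(a_k+\lambda_k\|g_k\|)}{G_{k-1}} .
\]

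\textbf{Step 3: an a priori bound $a_k\lesssim(1+\rho_n)^{1/2}\sqrt{G_k}$ (main obstacle).} The sum in Step 2 is only of the right order if $a_k=O(\sqrt{G_{k}})$ up to $\rho_n$ factors. I would prove this by strong induction on $m$, using the nonnegativity of the Step 1 bound truncated at $m$. Assume the bound for all $k\le m$. Then the truncated right-hand side has the form $\widehat D(a_{m+1}-\tfrac13 a_{m+1}^3/G_m)+C'\widehat D(1+\rho_n)\sqrt{G_m}$. Requiring this to be $\ge0$ forces $a_{m+1}\le c(1+\rho_n)^{1/2}\sqrt{G_m}$, because the cubic dominates.

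Making the constants close, so that the induction does not inflate them, is where I expect the work to be. If the exponent does not close, my fallback is to use the cruder bound $\lambda_k\|g_k\|\le\sqrt{\rho_n G_{k-1}}$ inside the remainder terms. That still gives a factor linear in $(1+\rho_n)$, which is all the statement requires.

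\textbf{Step 4: conclude.} With the bound from Step 3, the sum is at most
\[
C(1+\rho_n)\sum_k\frac{\lambda_k^2\|g_k\|^2}{\sqrt{G_{k-1}}}\le 2C(1+\rho_n)^{3/2}\sqrt{G_n}\ \text{(or better)},
\]
via $\sqrt{G_k}-\sqrt{G_{k-1}}\ge \lambda_k^2\|g_k\|^2/(2\sqrt{G_k})$ and $G_k\le(1+\rho_n)G_{k-1}$; I would then absorb the extra powers into a single factor $(1+\rho_n)$ by a more careful use of the previous step. Finally, maximize $u-\tfrac13u^3/G_n$ over $u\ge0$. The maximum is attained at $u=\sqrt{G_n}$ and equals $\tfrac23\sqrt{G_n}$. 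Collecting constants yields $14\widehat D(1+\rho_n)\sqrt{G_n}$, and dividing by $\sum_{k=0}^n\lambda_k$ gives the statement.
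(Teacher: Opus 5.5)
Your argument is correct, but at the key step it takes a different route from the paper's. Both proofs use the same convexity reduction and the same cubic comparison. The paper's identity $t_k-\frac23\,(u_k^3-u_{k+1}^3)/v_k=(u_k\Delta_k+u_kd_k^2+\frac23 d_k^3)/v_k$ is your $a(a^2-b^2)=\frac23(a^3-b^3)+\frac13(a-b)^2(a+2b)$ rewritten.

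The difference is what happens to the nonpositive interior terms of the summation by parts.
The paper keeps them, as $-u_{k+1}^3\Delta_k/(v_kv_{k+1})$. Lemma~\ref{keybound1} then splits each step according to whether $u_k>2\sqrt{v_k}+4\sqrt{\Delta_k}$: when $u_k$ is large these terms cancel $3u_k\Delta_k/v_k$, and otherwise $u_k$ is bounded outright. The outcome, Lemma~\ref{keybound2}, is a deterministic inequality for arbitrary scalar sequences with $d_k^2\le\Delta_k$; it makes no use of $f$.

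You instead discard those terms and recover control of $a_k$ from the global fact $\sum_{k\le m}\lambda_k(f(x_k)-f_*)\ge0$. This is a self-bounding argument in the D-Adaptation style. What your route buys:
\begin{itemize}
\item it avoids the case analysis;
\item it gives a better constant;
\item as a by-product, the iterates stay in a ball: $\|x_k-x_0\|=\widehat D a_k^2/G_{k-1}=O((1+\rho_n)\widehat D)$.
\end{itemize}
What the paper's route buys is Theorem~\ref{thm:weighted_coord}. There the scalar lemma is applied coordinate by coordinate, and the per-coordinate sums $\sum_k\lambda_k g_{k,j}(x_{k,j}-x_{*,j})$ need not be nonnegative. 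This is exactly the obstruction to D-Adaptation-type arguments that the paper notes in Section~\ref{sec:motivation}, so your argument would not transfer.

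The parts you left open close cleanly if you state the a priori bound as $a_k\le c\sqrt{(1+\rho_n)G_{k-1}}$, which is what your induction actually produces.
\begin{itemize}
\item Under this hypothesis, Lemma~\ref{sumbounds} (with $v_k=G_{k-1}$) turns the truncated inequality into $\frac{t^3}{3}-t\le(2c+1)(1+\rho_n)$ for $t=a_{m+1}/\sqrt{G_m}$. The case $t>c\sqrt{1+\rho_n}$ is then impossible for $c=4$, so the induction closes.
\item The final sum is at most $c\sqrt{1+\rho_n}\,\bigl(1+\sqrt{1+\rho_n}\bigr)\sqrt{G_n}\le 2c(1+\rho_n)\sqrt{G_n}$.
\end{itemize}
So the factor $(1+\rho_n)^{3/2}$ in your Step 4 never arises. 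It came from indexing by $\sqrt{G_k}$ and then paying $\sqrt{1+\rho_n}$ to convert back to $\sqrt{G_{k-1}}$. Collecting terms gives $\frac{28}{3}\widehat D(1+\rho_n)\sqrt{G_n}$, which is within the claimed constant 14.
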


The following lemma is the key technical ingredient in the proof of Theorem~\ref{thm:weighted}.

\begin{lemma}\label{keybound2}
Let \((u_k)_{k\ge1}\) and \((v_k)_{k\ge1}\) be sequences such that \(v_1>0\) and
\[
u_k\ge 0,\qquad d_k:=u_{k+1}-u_k,\qquad \Delta_k:=v_{k+1}-v_k,
\]
with
\[
d_k^2\le \Delta_k \qquad (k\ge1).
\]
Then
\begin{align*}
u_{n+1}+ \frac{1}{2} \sum_{k=1}^n
\frac{u_k}{v_k}\left(u_k^2-u_{k+1}^2+\Delta_k\right)
&\le
\frac{2}{3} \sqrt{v_{n+1}}
+\frac13\,\frac{u_1^3}{v_1}
+2\sum_{k=1}^n \frac{\Delta_k}{\sqrt{v_k}}
+\frac{13}{3} \sum_{k=1}^n \frac{\Delta_k^{3/2}}{v_k}.
\end{align*}
\end{lemma}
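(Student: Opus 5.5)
The plan is to treat the sum on the left as a discrete version of $\frac{d}{dt}\bigl(u - u^3/(3v)\bigr)$, rewrite it as a telescoping main part plus local error terms, and then bound each local term by a one-variable maximization in $u$. First observe that $\Delta_k\ge d_k^2\ge 0$, so $v_k$ is nondecreasing and $v_k\ge v_1>0$ for all $k$; hence every quotient is well defined. Expanding $u_{k+1}^2=u_k^2+2u_kd_k+d_k^2$ gives
\[
\tfrac12\frac{u_k}{v_k}\bigl(u_k^2-u_{k+1}^2+\Delta_k\bigr)=-\frac{u_k^2d_k}{v_k}+\frac{u_k(\Delta_k-d_k^2)}{2v_k}.
\]
The second summand is at most $u_k\Delta_k/(2v_k)$. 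For the first, I will use the cubic expansion $u_{k+1}^3-u_k^3=3u_k^2d_k+3u_kd_k^2+d_k^3$. This gives
\[
-\frac{u_k^2d_k}{v_k}=-\frac{u_{k+1}^3-u_k^3}{3v_k}+\frac{u_kd_k^2}{v_k}+\frac{d_k^3}{3v_k}.
\]
Using $d_k^2\le\Delta_k$ and $|d_k|^3\le\Delta_k^{3/2}$, the remainders are bounded by $u_k\Delta_k/v_k+\Delta_k^{3/2}/(3v_k)$.

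Next I would apply Abel summation to $-\frac13\sum_{k}(u_{k+1}^3-u_k^3)/v_k$. This produces the boundary term $\frac13 u_1^3/v_1$ (which appears in the statement), the term $-\frac13 u_{n+1}^3/v_{n+1}$, and interior terms. More precisely, writing $1/v_k - 1/v_{k+1}=\Delta_k/(v_kv_{k+1})$, the sum equals
\[
\frac{u_1^3}{3v_1}-\frac{u_{n+1}^3}{3v_{n+1}}-\frac13\sum_{k=1}^n u_{k+1}^3\frac{\Delta_k}{v_kv_{k+1}}.
\]
(Here I include the $k=n$ boundary properly, up to sign bookkeeping.) Combining with $u_{n+1}$ on the left, the function $u\mapsto u-u^3/(3v_{n+1})$ is maximized at $u=\sqrt{v_{n+1}}$, which gives the term $\frac23\sqrt{v_{n+1}}$.

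What remains for each $k$ is the local expression
\[
\frac{3u_k\Delta_k}{2v_k}-\frac{u_{k+1}^3\Delta_k}{3v_kv_{k+1}}+\frac{\Delta_k^{3/2}}{3v_k}.
\]
The main obstacle is that the negative cubic carries $u_{k+1}$ and $v_{k+1}$ rather than $u_k$ and $v_k$. I would handle this with two cases.
\begin{itemize}
\item If $\Delta_k\ge v_k$, I simply drop the negative term and use $u_k\Delta_k/v_k\le$ something of the form $\sqrt{\Delta_k}\,\Delta_k/v_k$ when $u_k\lesssim\sqrt{\Delta_k}$; otherwise I use the second case's argument with a constant loss.
\item If $\Delta_k\le v_k$, then $v_{k+1}\le 2v_k$, and $u_{k+1}\ge u_k-\sqrt{\Delta_k}$.
\end{itemize}
In the second case I would split on whether $u_k\ge 2\sqrt{\Delta_k}$. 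If so, $u_{k+1}^3\ge u_k^3/8$, and the local term is at most $\frac{\Delta_k}{v_k}\sup_{u\ge0}\bigl(\tfrac32u-\tfrac{u^3}{48v_k}\bigr)=c\,\Delta_k/\sqrt{v_k}$. If instead $u_k<2\sqrt{\Delta_k}$, the positive term is directly $O(\Delta_k^{3/2}/v_k)$. Tracking these constants, possibly by choosing the split thresholds more cleverly (for example, using $u_{k+1}^3\ge u_k^3-3u_k^2\sqrt{\Delta_k}$ and absorbing the error into the $\Delta_k^{3/2}$ and $\Delta_k/\sqrt{v_k}$ buckets), should give the stated coefficients $2$ and $\frac{13}{3}$. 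I expect this constant-matching in the local estimate to be the fiddliest part of the proof, but the overall structure of the bound is determined by the telescoping step above.
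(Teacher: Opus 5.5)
Your architecture is the same as the paper's: turn the quadratic increment into a cubic one, Abel-sum $-\frac13\sum_k (u_{k+1}^3-u_k^3)/v_k$, maximize $u_{n+1}-u_{n+1}^3/(3v_{n+1})$ to get $\frac23\sqrt{v_{n+1}}$, and reduce everything to a per-step bound on $\frac{\Delta_k}{v_k}\bigl[c\,u_k-\frac{u_{k+1}^3}{3v_{k+1}}\bigr]$.

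The gap is that per-step estimate. It carries all the difficulty (the paper isolates it as Lemma~\ref{keybound1}), you leave it unfinished, and the bounds you sketch do not give the stated coefficients.

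\textbf{Case $\Delta_k\le v_k$, $u_k\ge 2\sqrt{\Delta_k}$.} Here $\sup_{u\ge0}\bigl(\tfrac32u-\tfrac{u^3}{48v_k}\bigr)=2\sqrt6\,\sqrt{v_k}\approx 4.9\sqrt{v_k}$. So you get about $4.9\,\Delta_k/\sqrt{v_k}$ where the statement allows only $2\,\Delta_k/\sqrt{v_k}$. When $\Delta_k\ll v_k$, the $\Delta_k^{3/2}/v_k$ term cannot absorb the excess.

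\textbf{Case $\Delta_k\ge v_k$.} This case is not actually handled. For $u_k\gg\sqrt{\Delta_k}$ the negative term cannot be dropped, and your fallback uses $v_{k+1}\le 2v_k$, which is false there. Redoing it with $v_{k+1}\le 2\Delta_k$ gives $2\sqrt6\,\Delta_k^{3/2}/v_k$, above the available $4\,\Delta_k^{3/2}/v_k$.

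\textbf{An avoidable loss.} Bounding the two remainders separately produces $\frac{3u_k\Delta_k}{2v_k}$. Their sum is actually $\frac{u_k(\Delta_k+d_k^2)}{2v_k}\le\frac{u_k\Delta_k}{v_k}$, which is the coefficient the paper uses. With your $\frac32$, even the paper's split below would overshoot.

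\textbf{How the paper closes the step.} It splits on $u_k\le 2\sqrt{v_k}+4\sqrt{\Delta_k}$. On the complement, $u_{k+1}\ge u_k-\sqrt{\Delta_k}$ and $\sqrt{v_{k+1}}\le\sqrt{v_k}+\sqrt{\Delta_k}$ give $u_{k+1}>2\sqrt{v_{k+1}}$ and $u_{k+1}\ge\frac34u_k$. Hence $u_{k+1}^3/v_{k+1}>3u_k$ and the local term is nonpositive. On the first set the positive part is at most $\frac{\Delta_k}{v_k}\bigl(2\sqrt{v_k}+4\sqrt{\Delta_k}\bigr)$. Adding your $\frac13\Delta_k^{3/2}/v_k$ gives exactly $2$ and $\frac{13}{3}$. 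The mismatch between $v_{k+1}$ and $v_k$ is handled by $\sqrt{v_{k+1}}\le\sqrt{v_k}+\sqrt{\Delta_k}$, not by comparing $\Delta_k$ with $v_k$.

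\textbf{A case-free alternative.} This works even with your coefficient $\frac32$. Since $u_{k+1}\ge(u_k-\sqrt{\Delta_k})_+$, write $u_k=\sqrt{\Delta_k}+w$ with $w\ge0$ (the case $u_k\le\sqrt{\Delta_k}$ is trivial). Then
\[
\tfrac32u_k-\frac{u_{k+1}^3}{3v_{k+1}}
\le\tfrac32\sqrt{\Delta_k}+\sup_{w\ge0}\Bigl(\tfrac32w-\frac{w^3}{3v_{k+1}}\Bigr)
=\tfrac32\sqrt{\Delta_k}+\sqrt{\tfrac32 v_{k+1}}
\le\sqrt{\tfrac32}\,\sqrt{v_k}+\Bigl(\tfrac32+\sqrt{\tfrac32}\Bigr)\sqrt{\Delta_k}.
\]
This fits within $2\sqrt{v_k}+4\sqrt{\Delta_k}$ and completes your argument with the stated constants.
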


In our setting, the left-hand side of the inequality above is proportional to the convexity upper bound for
\(
\sum_{k=0}^{n}\lambda_k\bigl(f(x_k)-f_*\bigr)
\)
after the substitutions
\(
u_k:=\|s_k\|,
\,
v_k:=\sum_{i=0}^{k-1}\lambda_i^2\|g_i\|^2.
\)
See the proof of Theorem~\ref{thm:weighted} for the corresponding derivation.

\begin{remark}
The conditions of Lemma~\ref{keybound2} are invariant under adding a nonnegative constant to the sequence \((v_k)\). More precisely, for any \(c\ge 0\), define
\[
\tilde v_k:=v_k+c \qquad (k\ge 1).
\]
Then
\[
(u_{k+1}-u_k)^2\le \tilde v_{k+1}-\tilde v_k=v_{k+1}-v_k,
\]
so the assumptions remain unchanged, except that \(v_1\) is replaced by \(\tilde v_1=v_1+c\). In particular, in Algorithm \ref{algo1} one may define
\[
\gamma_{k+1}=\frac{\widehat D\|s_{k+1}\|}{c+\sum_{i=0}^{k}\lambda_i^2\|g_i\|^2},
\qquad k\ge 0,
\]
and Theorem~\ref{thm:weighted} gives
\[
f(\hat x_n)-f_*
\le
14\widehat D(1+\rho_n)
\frac{\sqrt{c+\sum_{k=0}^{n}\lambda_k^2 \|g_k\|^2}}{\sum_{k=0}^n \lambda_k}
\]
for
\[
\rho_n:=
\max_{1\le k\le n} \,
\frac{\lambda_k^2\|g_k\|^2}{c+\sum_{i=0}^{k-1}\lambda_i^2\|g_i\|^2}.
\]
\end{remark}

The parameter \(c\) plays the same role as the \(\varepsilon\)-stabilization commonly used in AdaGrad, RMSProp, and Adam-type methods. Its presence does not alter the qualitative form of the guarantee, but improves robustness when the accumulated squared-gradient term is still small. In particular, the regularized formulation makes clear that FlexGrad can be implemented with the same kind of denominator smoothing that is standard in practical adaptive optimizers.

We next specialize the general bound to polynomially growing weights, under the standard additional assumption that \(f\) is Lipschitz continuous. This yields a more explicit asymptotic statement and clarifies how the regularity factor behaves for a common class of schedules.

\begin{theorem}\label{thm:weighted_poly}
Assume that, in Algorithm~\ref{algo1}, the weights satisfy
\[
\lambda_k \asymp k^\alpha \qquad (k\ge 1)
\]
for some \(\alpha>-\tfrac12\). Define
\[
\delta_n:=
\begin{cases}
1, & \text{if} \ \ \alpha\le 0,\\[4pt]
\displaystyle \min_{1\le k\le n}\sqrt{k}\,\|g_k\|, & \text{if} \ \ \alpha>0.
\end{cases}
\]
Then
\[
f(\hat x_n)-f_*
=
\mathcal O\!\left(\frac1{\delta_n^2 \, \sqrt n}\right).
\]
\end{theorem}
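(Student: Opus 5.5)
We apply Theorem~\ref{thm:weighted} directly and estimate its three ingredients under the Lipschitz assumption $\|g_k\|\le G$ and the weights $\lambda_k\asymp k^\alpha$. Writing $\Lambda_n:=\sum_{k=0}^n\lambda_k$ and $V_n:=\sum_{k=0}^n\lambda_k^2\|g_k\|^2$, the bound reads $f(\hat x_n)-f_*\le 14\widehat D(1+\rho_n)\sqrt{V_n}/\Lambda_n$. The plan is to show three things: $\Lambda_n\asymp n^{\alpha+1}$, which uses $\alpha>-1/2>-1$; $V_n=\mathcal O(n^{2\alpha+1})$, which uses $2\alpha>-1$ and $\|g_k\|\le G$; and $\rho_n=\mathcal O(1/\delta_n^2)$. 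The first two together give $\sqrt{V_n}/\Lambda_n=\mathcal O(n^{\alpha+1/2}/n^{\alpha+1})=\mathcal O(n^{-1/2})$. The third then yields $(1+\rho_n)=\mathcal O(1/\delta_n^2)$, since $\delta_n\le G$ forces $1\lesssim 1/\delta_n^2$ in the case $\alpha>0$, and $\delta_n=1$ in the case $\alpha\le 0$.

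The main work is bounding $\rho_n$, that is, bounding $\lambda_k^2\|g_k\|^2/\sum_{i<k}\lambda_i^2\|g_i\|^2$ uniformly in $k$. The numerator is always at most $G^2\lambda_k^2\lesssim G^2k^{2\alpha}$. For the denominator we need a lower bound.

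\emph{Case $\alpha\le 0$.} The denominator is at least $\lambda_0^2\|g_0\|^2>0$; this is positive because Algorithm~\ref{algo1} exits early if $g_0=0$, and $\lambda_0>0$. If $\alpha=0$ this alone gives $\rho_n\le C$. If $\alpha<0$, the numerator is $\lesssim k^{2\alpha}\to 0$, so the ratio is again bounded by $G^2\sup_k\lambda_k^2/(\lambda_0^2\|g_0\|^2)$. This constant depends on the problem but not on $n$, so $\rho_n=\mathcal O(1)$. The point to check here is that nothing better than the single term $i=0$ is needed, because the numerator does not grow.

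\emph{Case $\alpha>0$.} This is the step I expect to be the real obstacle, because the numerator grows like $k^{2\alpha}$ and the denominator must grow comparably. We use the definition of $\delta_n$: for $1\le i\le k-1\le n$ we have $\|g_i\|^2\ge\delta_n^2/i$. Hence
\[
\sum_{i=0}^{k-1}\lambda_i^2\|g_i\|^2\ \ge\ \delta_n^2\sum_{i=1}^{k-1}\frac{\lambda_i^2}{i}\ \gtrsim\ \delta_n^2\sum_{i=1}^{k-1}i^{2\alpha-1}\ \asymp\ \delta_n^2\,k^{2\alpha},
\]
valid for $k\ge 2$, using $2\alpha-1>-1$. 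The ratio is then $\lesssim G^2k^{2\alpha}/(\delta_n^2k^{2\alpha})=\mathcal O(1/\delta_n^2)$. The index $k=1$ is handled separately through the $i=0$ term as in the other case, giving an $\mathcal O(1)$ contribution that is dominated by $1/\delta_n^2$.

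Taking the maximum over $k$ gives $\rho_n=\mathcal O(1/\delta_n^2)$. Combining this with the estimate of $\sqrt{V_n}/\Lambda_n$ proves the claim. The constants depend on $\widehat D$, $G$, $\alpha$, the implicit constants in $\lambda_k\asymp k^\alpha$, and $\lambda_0\|g_0\|$.
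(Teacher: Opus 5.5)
Your proposal is correct and follows essentially the same route as the paper. Both arguments invoke Theorem~\ref{thm:weighted}, bound $\sqrt{V_n}/\Lambda_n=\mathcal O(n^{-1/2})$ using Lipschitzness and polynomial sums, and bound $\rho_n$ by a constant for $\alpha\le 0$ (through the $\lambda_0^2\|g_0\|^2$ term) and by $\mathcal O(G^2/\delta_n^2)$ for $\alpha>0$. The only differences are minor. You use the per-index bound $\|g_i\|^2\ge\delta_n^2/i$ and treat $k=1$ separately, whereas the paper uses the uniform bound $\delta_n^2/k$ and loosely extends it to $i=0$, where $\delta_n$ gives no control over $\|g_0\|$; your version avoids that slip.
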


The conclusion shows that, within the polynomial-weight regime \(\lambda_k\asymp k^\alpha\), FlexGrad still achieves the standard \(\mathcal O(n^{-1/2})\) rate unless unbounded weights are accompanied by excessively fast gradient decay. The quantity \(\delta_n\) captures exactly this issue. For \(\alpha\le 0\), no further condition is required; for \(\alpha>0\), it suffices that \(\sqrt{k}\,\|g_k\|\) stay bounded away from zero. In this sense, Theorem~\ref{thm:weighted_poly} recovers the usual \(\mathcal O(n^{-1/2})\) rate throughout the natural nondegenerate regimes.

One of the main motivations for FlexGrad is that the same root-free scaling extends naturally to the coordinate-wise setting. This is especially important from a theoretical perspective, since many globally adaptive methods do not admit equally clean coordinate-wise analogues, and many practical coordinate-wise methods are backed primarily by smooth or stochastic analyses rather than nonsmooth convex guarantees of this form.

\begin{algorithm}[h!]
\caption{Coordinate-wise FlexGrad}
\label{algo1_coord}
\begin{flushleft}
\textbf{Input:} $x_0\in\mathbb R^d$, \, $(\lambda_k)_{k\ge0}$ with $\lambda_k\ge0$ and $\lambda_0>0$, \, $\vec D\in\mathbb R^d$ such that $\vec D\ge |x_0-x_*|$ \\
$s_0=0$, \, $g_0\in\partial f(x_0)$, \, $v_0=0$\\
\textbf{for} $k=0$ \textbf{to} $n-1$ \textbf{do} \\
\quad $g_k\in\partial f(x_k)$ \\
\quad $s_{k+1}=s_k+\lambda_k g_k$ \\
\quad $v_{k+1}=v_k+\lambda_k^2 g_k \odot g_k$ \\
\quad $\displaystyle \gamma_{k+1}= \vec D\odot |s_{k+1}| \oslash v_{k+1} \qquad \text{(using convention } 0/0=0)$ \\
\quad $x_{k+1}=x_0-\gamma_{k+1} \odot s_{k+1}$ \\
\textbf{end for} \\
Return $\displaystyle \hat x_n=\frac{1}{\sum_{k=0}^n \lambda_k}\sum_{k=0}^n \lambda_k x_k$
\end{flushleft}
\end{algorithm}

The next theorem shows that the coordinate-wise variant 
of FlexGrad  given in Algorithm \ref{algo1_coord}
below admits essentially the same convergence theory as the global variant.

\begin{theorem}\label{thm:weighted_coord}
In Algorithm \ref{algo1_coord}, for every \(n\ge1\),
\[
f(\hat x_n)-f_*
\le
14\|\vec D\|(1+\rho_n)
\frac{\sqrt{\sum_{k=0}^{n}\lambda_k^2 \|g_k\|^2}}{\sum_{k=0}^n \lambda_k},
\]
where
\[
\rho_n:=
\max_{\substack{1\le k\le n\\ 1\le j\le d}} \,
\frac{\lambda_k^2 g_{k,j}^2}{\sum_{i=0}^{k-1}\lambda_i^2 g_{i,j}^2}.
\]
\end{theorem}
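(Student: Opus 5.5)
The plan is to reduce Theorem~\ref{thm:weighted_coord} to $d$ one-dimensional instances of Lemma~\ref{keybound2}, one per coordinate, and then sum them with Cauchy--Schwarz. By convexity and the weighted averaging in Algorithm~\ref{algo1_coord},
\[
\Big(\sum_{k=0}^n\lambda_k\Big)\big(f(\hat x_n)-f_*\big)\le \sum_{k=0}^n\lambda_k\langle g_k,x_k-x_*\rangle=\sum_{j=1}^d \sum_{k=0}^n \lambda_k g_{k,j}(x_{k,j}-x_{*,j}).
\]
So it suffices to bound, for each coordinate $j$, the scalar quantity $R_j:=\sum_k\lambda_k g_{k,j}(x_{k,j}-x_{*,j})$. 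For fixed $j$, the coordinate update is exactly the global FlexGrad recursion in dimension one:
\[
x_{k,j}=x_{0,j}-D_j\,\frac{|s_{k,j}|\,s_{k,j}}{v_{k,j}},
\]
with $D_j\ge|x_{0,j}-x_{*,j}|$. The one-dimensional derivation used for Theorem~\ref{thm:weighted} therefore applies verbatim, with the substitutions $u_k=|s_{k,j}|$ and $v_k=v_{k,j}=\sum_{i<k}\lambda_i^2g_{i,j}^2$. The hypothesis of Lemma~\ref{keybound2} holds because $(|s_{k+1,j}|-|s_{k,j}|)^2\le \lambda_k^2g_{k,j}^2=\Delta_k$. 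The output is a per-coordinate bound
\[
R_j\le C\,D_j(1+\rho_n^{(j)})\sqrt{v_{n+1,j}},
\]
where $\rho_n^{(j)}$ is the coordinatewise ratio $\max_k \lambda_k^2g_{k,j}^2/\sum_{i<k}\lambda_i^2g_{i,j}^2$, which is at most $\rho_n$. In this step, $\sum_k\Delta_k/\sqrt{v_k}\lesssim\sqrt{1+\rho}\,\sqrt{v_{n+1}}$ and $\sum_k\Delta_k^{3/2}/v_k\le\sqrt{\rho}\sum_k\Delta_k/\sqrt{v_k}$, and the constant comes out as $14$ exactly as in Theorem~\ref{thm:weighted}.

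Summing over $j$ and applying Cauchy--Schwarz then gives
\[
\sum_j D_j\sqrt{v_{n+1,j}}\le \|\vec D\|\sqrt{\sum_j v_{n+1,j}}=\|\vec D\|\sqrt{\sum_{k=0}^{n}\lambda_k^2\|g_k\|^2},
\]
and dividing by $\sum_k\lambda_k$ yields the claim.

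The main obstacle is handling degenerate coordinates. Lemma~\ref{keybound2} requires $v_1>0$, that is $g_{0,j}\ne0$, but a coordinate may have zero gradient for the first several iterations. Under the $0/0=0$ convention such a coordinate simply does not move, so the terms $\lambda_kg_{k,j}(x_{k,j}-x_{*,j})$ vanish until the first index $k_j$ with $g_{k_j,j}\neq0$. I would re-index from $k_j$, set $u_1=|s_{k_j+1,j}|$, and apply the lemma from there. The boundary term $\tfrac13u_1^3/v_1=\tfrac13\lambda_{k_j}|g_{k_j,j}|$ together with the first-step contribution is then controlled by $D_j\sqrt{v_{n+1,j}}$. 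Coordinates with all $g_{k,j}=0$ contribute nothing. One must also check that $\rho_n$, with $0/0$ interpreted as $0$ and with the first nonzero step appearing as an infinite ratio, does not break the argument. The first step from $v=0$ is exactly the one excluded when the lemma starts at $k_j$, just as $k=0$ is excluded in the global theorem.
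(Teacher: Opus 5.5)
Your proposal is correct and follows the paper's proof essentially step for step: you linearize via convexity and Jensen, treat each coordinate as a one-dimensional FlexGrad instance, re-index from the first activation index (the paper's $\tau_j$) so that Lemma~\ref{keybound2} applies with $v_1>0$, bound the auxiliary sums with Lemma~\ref{sumbounds} to get the constant $14$, and combine the coordinates by Cauchy--Schwarz. One small correction: the activation index should be the first $k$ with $\lambda_k g_{k,j}\neq 0$ rather than $g_{k,j}\neq 0$, since a zero weight also leaves $s_{k,j}$ and $v_{k,j}$ unchanged; and, as you anticipated, the paper likewise takes $\rho_n$ only over entries with $v_{k,j}>0$, which gives a bound at least as strong as the stated one.
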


The structure of the bound is the same as in the global case, with \(\|\vec D\|\) replacing \(\widehat D\) and a coordinate-wise version of the irregularity factor \(\rho_n\). Thus the root-free scaling remains theoretically stable even when applied independently across coordinates. This is one of the main conceptual differences between FlexGrad and several other adaptive mechanisms based on cumulative gradient information: the same basic principle extends directly to a coordinate-wise rule without requiring square-root corrections, sign restrictions, or auxiliary clipping devices in the analysis. As in the global case, one can combine Theorem~\ref{thm:weighted_coord} with polynomially growing weights \(\lambda_k\asymp k^\alpha\) to obtain the corresponding \(\mathcal O(n^{-1/2})\)-type asymptotic estimate under the analogous nondegeneracy condition on the coordinate gradients.

Taken together, these results show that FlexGrad preserves the standard adaptive guarantees associated with AdaGrad-type methods, while incorporating cumulative-gradient information in a manner more reminiscent of D-Adaptation and coin-betting ideas. The resulting theory makes the comparison precise: in the worst case, FlexGrad behaves like a classical adaptive method; in more favorable regimes, it can respond more aggressively to persistent gradient signal; and this same principle extends cleanly from the global setting to the coordinate-wise one.

\section{Exponential-Moving-Average Variants}\label{sec:ema}

Exponential-Moving-Average variants of AdaGrad such as Adam \citep{kingma2014adam} and AdamW \citep{loshchilov2018decoupled} are overwhelmingly used in practice due to their superior practical performance over vanilla AdaGrad. Our approach can be adapted to use EMAs for the $s$ and $v$ sequences as follows (where each update is applied per-coordinate): 
\begin{align}
s_{k+1} & =\beta s_{k}+(1-\beta)g_{k}, \nonumber\\
v_{k+1} & =\beta v_{k}+\left(1-\beta\right)g_{k}^{2}. \label{eq:ema}
\end{align}
When using a mirror-descent-style step for consistency with Adam, the update becomes:
\[
x_{k+1}=x_{k}-\gamma\frac{\left|s_{k+1}\right|}{v_{k+1}}m_{k+1}.
\]
Here, $m_{k+1}$ is the momentum buffer also used by Adam. This approach is closely related to Adam as we may rewrite the step as:
\[
\frac{\left|s_{k+1}\right|}{v_{k+1}}m_{k+1} = \left(\frac{\left|s_{k+1}\right|}{\sqrt{v_{k+1}}}\right)\cdot\frac{m_{k+1}}{\sqrt{v_{k+1}}}.
\]
We have rewritten the step to match the Adam update $m_{k+1}/\sqrt{v_{k+1}}$ modulated by a quantity related to the signal-to-noise ratio $|\mu|/\sigma$ of the gradient. This modulator is non-expansive, as we show in Lemma~\ref{lem:ema_ineq}.
\begin{lemma}
\label{lem:ema_ineq} The EMA-based updates in Equation~\ref{eq:ema}, when initialized with $s_0=0$ and $v_0=0$, satisfy:
$$|s_{k+1}| \leq \sqrt{v_{k+1}}.$$
\end{lemma}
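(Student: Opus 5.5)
The claim follows from Jensen's inequality, i.e.\ convexity of $t\mapsto t^2$, applied to the EMA weights. Working per coordinate, I would first unroll both recursions in Equation~\ref{eq:ema} from $s_0=0$ and $v_0=0$:
\[
s_{k+1}=\sum_{j=0}^{k} w_j g_j,\qquad v_{k+1}=\sum_{j=0}^{k} w_j g_j^2,\qquad w_j:=(1-\beta)\beta^{k-j}.
\]
A one-line induction on $k$ confirms this, since both sequences are updated with the same coefficients $\beta$ and $1-\beta$. The weights are nonnegative for $\beta\in[0,1)$, and their sum is
\[
W:=\sum_{j=0}^k w_j=1-\beta^{k+1}\le 1.
\]

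Next I would apply Cauchy--Schwarz with the factorization $w_j g_j=\sqrt{w_j}\cdot\sqrt{w_j}\,g_j$:
\[
s_{k+1}^2=\Bigl(\sum_{j=0}^k \sqrt{w_j}\,\sqrt{w_j}\,g_j\Bigr)^2\le \Bigl(\sum_{j=0}^k w_j\Bigr)\Bigl(\sum_{j=0}^k w_j g_j^2\Bigr)=W\,v_{k+1}\le v_{k+1}.
\]
Taking square roots gives $|s_{k+1}|\le\sqrt{v_{k+1}}$. This already holds with the sharper factor $\sqrt{1-\beta^{k+1}}$, which equals the usual Adam bias-correction ratio. The inequality therefore also holds for the bias-corrected quantities, because $s$ and $v$ are divided by the same factor $1-\beta^{k+1}$ while the factor enters the right-hand side only through a square root. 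I might remark on this but do not need it.

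The main thing to get right is bookkeeping rather than any real obstacle. First, the same $\beta$ must appear in both recursions; with different $\beta_1$ and $\beta_2$, as in Adam, the argument would need a modified weight comparison. Second, the zero initialization is what makes the total weight at most $1$. A direct induction is an alternative route: assume $s_k^2\le v_k$ and use convexity, $(\beta s_k+(1-\beta)g_k)^2\le \beta s_k^2+(1-\beta)g_k^2\le v_{k+1}$. I would include this two-line induction as the written proof, since it avoids unrolling altogether.
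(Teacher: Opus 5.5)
Your proof is correct. The two-line induction you plan to write up is exactly the paper's proof: convexity of $t\mapsto t^2$ with weights $\beta$ and $1-\beta$, followed by the inductive hypothesis $s_k^2\le v_k$.

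Your primary argument takes a genuinely different route. You unroll both recursions to $s_{k+1}=\sum_{j=0}^k w_j g_j$ and $v_{k+1}=\sum_{j=0}^k w_j g_j^2$, then apply Cauchy--Schwarz. This is equally valid, and it gives something the paper's argument does not state: the quantitative slack $s_{k+1}^2\le(1-\beta^{k+1})\,v_{k+1}$. From this you get the inequality for the bias-corrected quantities $s_{k+1}/(1-\beta^{k+1})$ and $v_{k+1}/(1-\beta^{k+1})$ as well.

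The induction, in turn, is shorter and avoids computing the weight sum. It also goes through verbatim for any initialization satisfying $s_0^2\le v_0$, not only $s_0=v_0=0$.
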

\begin{proof}
We proceed by induction. The base case is trivial, so assume by induction that $|s_{k}| \leq \sqrt{v_{k}}$. Consider $s_{k+1}^2$. Since our weights $\beta$ and $(1-\beta)$ are positive and sum to 1, we can apply Jensen's inequality:
\[
\left(\beta s_k + (1-\beta)g_k\right)^2 \leq \beta s_k^2 + (1-\beta)g_k^2
\]
Now, apply our inductive hypothesis ($s_k^2 \leq v_k$) to the right side of the inequality:$$\beta s_k^2 + (1-\beta)g_k^2 \leq \beta v_k + (1-\beta)g_k^2$$
The right side of this new inequality is exactly the update rule for $v_{k+1}$. Therefore:
$$s_{k+1}^2 \leq \beta v_k + (1-\beta)g_k^2 = v_{k+1}.$$
Taking the square-root gives the result.
\end{proof}
This EMA version captures the same desirable properties of the non-EMA version: it takes larger steps in directions with high gradient signal, and smaller steps in directions dominated by noise. We use it as a practical Adam-style variant in the experiments, while the convergence theory in Section~\ref{sec:convergence-theory} applies to the cumulative non-EMA versions of FlexGrad. 

\section{Experiments}\label{sec:numsim}

\subsection{Convex Problems}

We validated FlexGrad on standard logistic-regression benchmarks from the LIBSVM repository, available at
\url{https://www.csie.ntu.edu.tw/~cjlin/libsvmtools/datasets/}.
Our code
is available
on Github at
\url{https://github.com/vguigues/Adaptive-Optimizer}.
We compare FlexGrad, coordinate-wise FlexGrad, EMA, and coordinate-wise EMA  against AdaGrad, Adam, BCOS-g, and GradaGrad. For each dataset and method, we performed an independent hyperparameter search and selected the configuration achieving the best accuracy. The grid was
\(\{10^{-3},10^{-2},10^{-1},1,10\}\)
for the parameter \(D\) in FlexGrad, the learning rate in AdaGrad, parameter
$\alpha_t=\alpha$ for BCOS-g,
and the parameter \(\gamma\) in the EMA variants. For the latter, we additionally used
\(\beta\in\{0.9,0.96,0.98\}\),
while for Adam we used
\(\beta_1,\beta_2\in\{0.9,0.99,0.999\}\) and
\(\alpha\in\{10^{-3},10^{-2},10^{-1},1,10\}\).
For BCOS-g, we also take 
\(\beta\in\{0.9,0.96,0.98\}\).
For GradaGrad, we take
$\gamma_0 \in \{10^{-3},10^{-2},10^{-1},1,10\}$, 
$\rho \in \{1,2,4\}$, and $r \in \{0.25,0.5, 1\}$.

All methods were implemented in Julia and run on a machine with an AMD Ryzen 5 2600 CPU and 16 GB of RAM. Each experiment used 50 epochs and regularization parameter \(10^{-4}\). For binary classification problems, we minimized the \(\ell_1\)-regularized logistic objective; for multiclass problems, we minimized the cross-entropy loss.

After hyperparameter selection, each method was run five times from independently generated initial points \(x_0\). 

In a first set of experiments, we compare variant BCOS-g of BCOS,
GradaGrad, Flexgrad coordinate wise, Adagrad,
EMA coordinate-wise, and ADAM.
For these methods, Figure~\ref{empprobmodel26} reports the mean accuracy along the iterations while Figure~\ref{empprobmodels6} reports the corresponding standard deviations. Figure~\ref{empprobmodelo6} reports the objective value along the iterations.

In this experiment, methods BCOS-g and GradaGrad
in general do not perform as well
as the remaining methods and in a second set of experiments
we compare our methods Flexgrad, Flexgrad coordinate-wise, 
EMA, and EMA coordinate-wise
against Adagrad and ADAM.

For this second set of experiments, Figure~\ref{empprobmodel2}  reports the mean accuracy along the iterations while Figure~ \ref{empprobmodels} reports the corresponding standard deviations. Figure~\ref{empprobmodelo} reports the objective value along the iterations.

Except on \texttt{vowel}, all methods achieve high accuracy after 50 iterations. The fastest methods to reach the best observed accuracy vary across datasets: coordinate-wise FlexGrad is fastest on several binary benchmarks, AdaGrad is fastest on others, and the EMA variants perform best on some multiclass problems. On \texttt{dna} and \texttt{usps}, all methods obtain similar accuracies.

The objective-value curves show that, on the binary benchmarks, all optimizers reach near-optimal values within fewer than 10 iterations. On the remaining datasets, the relative ordering varies: EMA and Adam are fastest on \texttt{wine}, EMA is fastest on \texttt{vehicle}, coordinate-wise EMA is best on \texttt{letter}, and the methods perform similarly on several other tasks. The standard deviations generally decrease along the iterations and are small at the final iteration for all methods.

These results indicate that the benefits of the FlexGrad variants are not restricted to nonconvex settings and also extend to standard convex benchmarks.

\subsection{LLM Pretraining}
We tested the EMA variant of FlexGrad for language model pretraining. We used a LLama 3 architecture 1B parameter model with 16 layers and a layer width of 2048, and pretrained on the FineWeb-EDU dataset for 19,074 steps at a batch-size of 1M, which equates to approximately 20B training tokens. This gives a tokens-per-parameter of 20, following the Chinchilla-ratio which is quickly becoming standard for comparisons in the literature. For both the baseline AdamW optimizer and the FlexGrad optimizer, we tuned both learning rate, weight-decay and $1-\beta_2$ on a log-spaced grid. Figure~\ref{fig:llm-pretrain-1b} shows that the FlexGrad optimizer is competitive with AdamW, giving essentially the same loss throughout training. This demonstrates that it can be used as a drop-in replacement for Adam.

\begin{figure}
    \centering
    \includegraphics[scale=0.5]{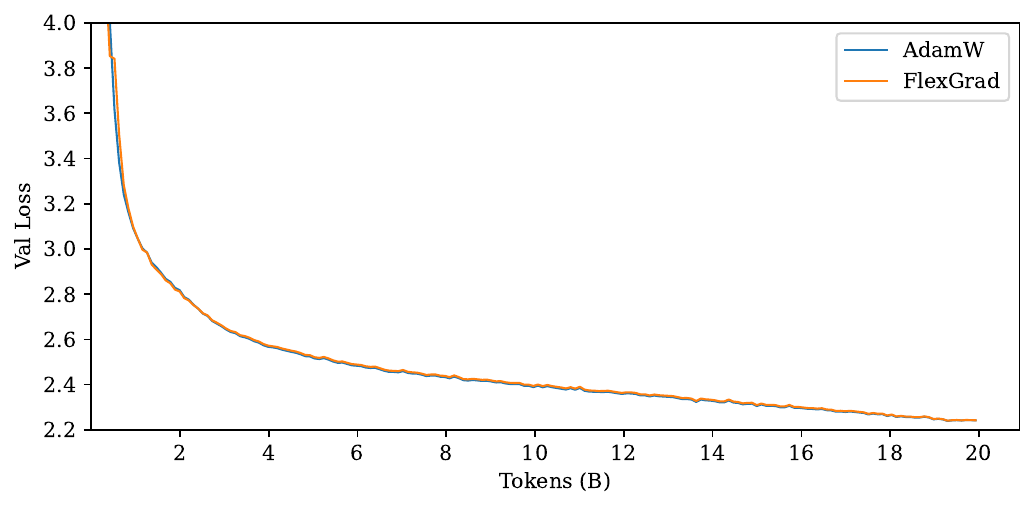}
    \caption{1B parameter LLM Pretraining}
    \label{fig:llm-pretrain-1b}
\end{figure}

\subsection{Learning Rate Robustness}
The EMA variant of the FlexGrad optimizer's biggest advantage is it's increased stability and robustness to larger step sizes. To investigate this we ran a learning rate sweep for AdamW and the FlexGrad optimizer on a 120M parameter model, trained for 12.5B tokens, with 0.25B batch-size. Figure~\ref{fig:lr_sweep} shows that FlexGrad can converge with step sizes much larger than AdamW without divergence. 

\begin{figure}
    \centering
    \includegraphics[scale=0.5]{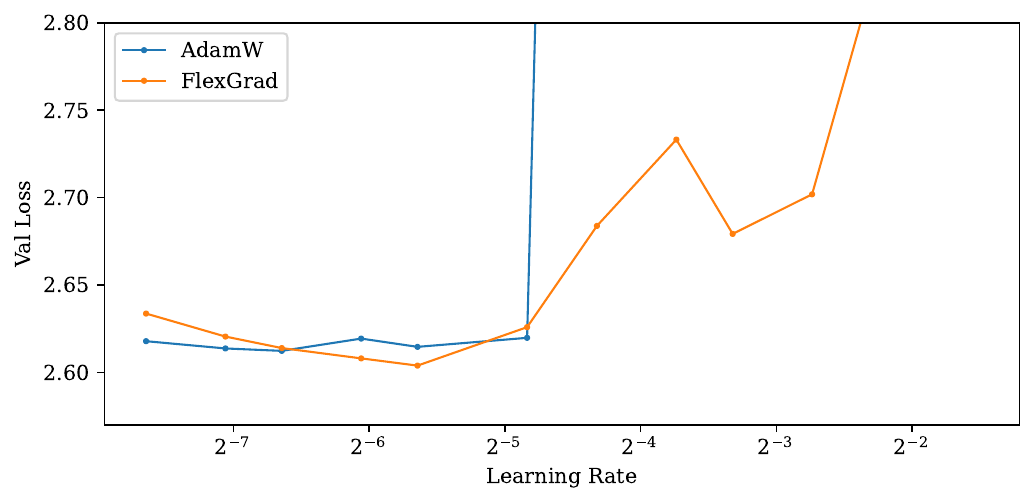}
    \caption{EMA optimizer converges stably with larger learning rates than AdamW}
    \label{fig:lr_sweep}
\end{figure}

\section{Conclusion}

We have presented FlexGrad, a root-free adaptive method that uses cumulative gradient information to modulate the stepsize more directly than classical square-root-normalized rules. The main limitations of the present work are the scope of the theory and the current empirical coverage. Our convergence guarantees apply to the non-EMA FlexGrad variants in the convex Lipschitz nonsmooth setting, while the EMA variants used in large-scale neural-network experiments are currently supported by stability observations and empirical evidence rather than a full convergence theory. Empirically, FlexGrad is competitive across the settings considered here, but its practical advantage is expected to be most visible when gradients exhibit persistent directional structure; in highly noisy regimes, the method is designed to behave similarly to AdaGrad-type normalization rather than to yield a large improvement. Its appeal is not only algorithmic simplicity, but also the fact that the same principle captures two desirable behaviors: conservative AdaGrad-type scaling in noisy or unstable regimes, and larger, correlation-sensitive updates when gradient directions remain coherent over time. Our analysis shows that this added responsiveness is compatible with the standard worst-case theory for nonsmooth Lipschitz convex optimization, both in the global and coordinate-wise settings.

\clearpage
\bibliography{neurips_2026}
\bibliographystyle{apalike}
\newpage


\appendix

\section{Appendix section}

The next lemma controls the two auxiliary sums that remain after the main scalar reduction. Both are bounded on the natural \(\sqrt{v_{n+1}}\) scale, up to the irregularity factor \(\rho_n\).

\begin{lemma}\label{sumbounds}
Let \(v_1>0\). Define the sequences \((\Delta_k)_{k\ge1}\), with \(\Delta_k\ge0\), and
\[
v_{k+1}=v_k+\Delta_k \qquad (k\ge1).
\]
For \(n\ge1\), let
\[
\rho_n:=\max_{1\le k\le n}\frac{\Delta_k}{v_k}.
\]
Then
\bi
\item[(i)]
\[
\sum_{k=1}^n \frac{\Delta_k}{\sqrt{v_k}}
\le
\left(1+\sqrt{1+\rho_n}\right)(\sqrt{v_{n+1}}-\sqrt{v_1}).
\]

\item[(ii)]
\[
\sum_{k=1}^n \frac{\Delta_k^{3/2}}{v_k}
\le
\sqrt{\rho_n}\,\left(1+\sqrt{1+\rho_n}\right)(\sqrt{v_{n+1}}-\sqrt{v_1}).
\]
\ei
\end{lemma}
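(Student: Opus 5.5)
The approach is to reduce both sums to a telescoping sum of $\sqrt{v_{k+1}}-\sqrt{v_k}$. The key identity is
\[
\sqrt{v_{k+1}}-\sqrt{v_k}=\frac{\Delta_k}{\sqrt{v_{k+1}}+\sqrt{v_k}},
\]
which gives $\Delta_k=(\sqrt{v_{k+1}}+\sqrt{v_k})(\sqrt{v_{k+1}}-\sqrt{v_k})$. Since $v_1>0$ and $\Delta_k\ge0$, every $v_k$ is positive, so all the quantities involved are well defined. We also have $v_{k+1}=v_k+\Delta_k\le(1+\rho_n)v_k$ for every $k\le n$, by the definition of $\rho_n$.

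For (i), divide the identity by $\sqrt{v_k}$ to get
\[
\frac{\Delta_k}{\sqrt{v_k}}=\Bigl(1+\sqrt{v_{k+1}/v_k}\Bigr)\bigl(\sqrt{v_{k+1}}-\sqrt{v_k}\bigr).
\]
Using $v_{k+1}/v_k\le 1+\rho_n$, this is at most $\bigl(1+\sqrt{1+\rho_n}\bigr)\bigl(\sqrt{v_{k+1}}-\sqrt{v_k}\bigr)$. Summing over $k=1,\dots,n$ telescopes to $\sqrt{v_{n+1}}-\sqrt{v_1}$, which is exactly (i).

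For (ii), write
\[
\frac{\Delta_k^{3/2}}{v_k}=\sqrt{\frac{\Delta_k}{v_k}}\cdot\frac{\Delta_k}{\sqrt{v_k}}\le\sqrt{\rho_n}\,\frac{\Delta_k}{\sqrt{v_k}},
\]
and then apply (i) term by term before summing. This gives the factor $\sqrt{\rho_n}\,\bigl(1+\sqrt{1+\rho_n}\bigr)\bigl(\sqrt{v_{n+1}}-\sqrt{v_1}\bigr)$ claimed in (ii).

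There is no real obstacle. The only point needing care is to bound the ratio $v_{k+1}/v_k$ uniformly by $1+\rho_n$, and this is immediate because $\rho_n$ is defined as a maximum over $1\le k\le n$.
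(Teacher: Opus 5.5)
Your proposal is correct and follows the paper's proof essentially step for step. You use the same factorization $\Delta_k/\sqrt{v_k}=(1+\sqrt{v_{k+1}/v_k})(\sqrt{v_{k+1}}-\sqrt{v_k})$, the same uniform bound $v_{k+1}/v_k\le 1+\rho_n$ followed by telescoping for (i), and the same reduction of (ii) to (i) through $\Delta_k^{3/2}/v_k\le\sqrt{\rho_n}\,\Delta_k/\sqrt{v_k}$. (The paper's text says ``item (ii)'' at that last step, but it means (i), as you use.)
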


\begin{proof}
\bi
\item[(i)]
Let \(R_k:=\sqrt{v_k}\). Since \(v_{k+1}-v_k=\Delta_k\),
\[
\frac{\Delta_k}{\sqrt{v_k}}
=
\frac{R_{k+1}^2-R_k^2}{R_k}
=
(R_{k+1}-R_k)\left(\frac{R_{k+1}}{R_k}+1\right)
=
(R_{k+1}-R_k)\left(\sqrt{1+\frac{\Delta_k}{v_k}}+1\right).
\]
Because \(\Delta_k/v_k\le \rho_n\),
\[
\frac{\Delta_k}{\sqrt{v_k}}
\le
(1+\sqrt{1+\rho_n})(R_{k+1}-R_k).
\]
Summing over \(k=1,\ldots,n\) yields
\[
\sum_{k=1}^n \frac{\Delta_k}{\sqrt{v_k}}
\le
(1+\sqrt{1+\rho_n})
\sum_{k=1}^n (R_{k+1}-R_k)=
(1+\sqrt{1+\rho_n})(\sqrt{v_{n+1}}-\sqrt{v_1}).
\]
\item[(ii)]
Using \(\Delta_k\le \rho_n v_k\), we get
\[
\frac{\Delta_k^{3/2}}{v_k}
=
\frac{\Delta_k}{\sqrt{v_k}}\sqrt{\frac{\Delta_k}{v_k}}
\le
\sqrt{\rho_n}\,\frac{\Delta_k}{\sqrt{v_k}}.
\]
Now it suffices to use item (ii) above.
\ei
\end{proof}

The next lemma controls the main nonlinear contribution appearing in the scalar estimate. It reduces this term to the two auxiliary sums estimated above.

\begin{lemma}\label{keybound1}
Let \((u_k)_{k\ge1}\) and \((v_k)_{k\ge1}\) be sequences such that \(v_1>0\) and
\[
u_k\ge 0,\qquad d_k:=u_{k+1}-u_k,\qquad \Delta_k:=v_{k+1}-v_k,
\]
with
\[
d_k^2\le \Delta_k \qquad (k\ge1).
\]
Then, for every \(n\ge1\),
\[
\sum_{k=1}^n \left[\frac{3u_k\Delta_k}{v_k}
-\frac{u_{k+1}^3\Delta_k}{v_kv_{k+1}}\right]
\le
6\sum_{k=1}^n \frac{\Delta_k}{\sqrt{v_k}}
+
12\sum_{k=1}^n \frac{\Delta_k^{3/2}}{v_k}.
\]
\end{lemma}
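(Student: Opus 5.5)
The plan is to prove the inequality \emph{termwise}: for each fixed $k\in\{1,\dots,n\}$ we show
\[
\frac{3u_k\Delta_k}{v_k}-\frac{u_{k+1}^3\Delta_k}{v_kv_{k+1}}\le 6\frac{\Delta_k}{\sqrt{v_k}}+12\frac{\Delta_k^{3/2}}{v_k},
\]
and then sum over $k$.

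First, $d_k^2\le\Delta_k$ forces $\Delta_k\ge0$, so $v_{k+1}\ge v_k\ge v_1>0$ and every quotient is well defined. If $\Delta_k=0$, both sides vanish and there is nothing to prove. If $\Delta_k>0$, we multiply through by the positive factor $v_k/\Delta_k$. The claim then reduces to the scalar inequality
\[
3u_k-\frac{u_{k+1}^3}{v_{k+1}}\le 6\sqrt{v_k}+12\sqrt{\Delta_k}.
\]

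Next we eliminate $u_k$ in favour of $w:=u_{k+1}\ge0$. Since $|u_{k+1}-u_k|=|d_k|\le\sqrt{\Delta_k}$, we have $u_k\le w+\sqrt{\Delta_k}$. Hence the left-hand side is at most
\[
3\sqrt{\Delta_k}+\Bigl(3w-\frac{w^3}{v_{k+1}}\Bigr).
\]
The bracket is a one-variable cubic in $w\ge0$. Its derivative $3-3w^2/v_{k+1}$ vanishes at $w=\sqrt{v_{k+1}}$, which gives the maximum value $2\sqrt{v_{k+1}}$. This is the same optimization that appears in the motivation section.

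Finally, by subadditivity of the square root, $\sqrt{v_{k+1}}=\sqrt{v_k+\Delta_k}\le\sqrt{v_k}+\sqrt{\Delta_k}$. The left-hand side is therefore at most $2\sqrt{v_k}+5\sqrt{\Delta_k}$, which is comfortably below $6\sqrt{v_k}+12\sqrt{\Delta_k}$. Multiplying back by $\Delta_k/v_k$ and summing over $k=1,\dots,n$ gives the lemma.

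The constants in the statement are loose, so no step is delicate. The only point needing care is using $u_{k+1}\ge0$, so that the cubic is maximized over $w\ge0$ rather than over all of $\mathbb R$, where it would be unbounded above. We also rely on the sign of the correction $u_k-u_{k+1}$ being controlled only through $|d_k|\le\sqrt{\Delta_k}$.
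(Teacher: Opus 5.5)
Your proof is correct, and it handles each term differently from the paper. The paper also argues term by term, but through a case split at the threshold $u_k = 2\sqrt{v_k}+4\sqrt{\Delta_k}$:
\begin{itemize}
\item above the threshold, it uses $u_{k+1}\ge \frac34 u_k$ and $u_{k+1}>2\sqrt{v_{k+1}}$ to show that $3u_k-u_{k+1}^3/v_{k+1}$ is nonpositive;
\item below it, it drops the negative cubic term and bounds $3u_k$ by the threshold, which is where the constants $6$ and $12$ come from.
\end{itemize}

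You use the same consequence $|d_k|\le\sqrt{\Delta_k}$, in the form $u_k\le u_{k+1}+\sqrt{\Delta_k}$. You then maximize $3w-w^3/v_{k+1}$ exactly over $w\ge 0$. So there is no case analysis and no threshold to tune, and the negative term is used in every regime rather than discarded in one of them.

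This gives the sharper bound $2\sum_k \Delta_k/\sqrt{v_k}+5\sum_k \Delta_k^{3/2}/v_k$. Carried through the proof of Lemma~\ref{keybound2} together with Lemma~\ref{sumbounds}, it would lower the constant $14$ in Theorem~\ref{thm:weighted} to about $6$.

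What the paper's split offers in return is mainly a qualitative picture: once $u_k$ is large relative to $\sqrt{v_k}$, the negative term fully cancels the positive one.
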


\begin{proof}
Set
\[
A_k:=\frac{3u_k\Delta_k}{v_k}
-\frac{u_{k+1}^3\Delta_k}{v_kv_{k+1}}.
\]
Split the indices into
\[
I:=\{k:\ u_k\le 2\sqrt{v_k}+4\sqrt{\Delta_k}\},
\qquad
J:=\{k:\ u_k>2\sqrt{v_k}+4\sqrt{\Delta_k}\}.
\]
We first show that \(A_k\le0\) for every \(k\in J\).
Since \(d_k^2\le \Delta_k\), we have \(|d_k|\le \sqrt{\Delta_k}\), hence
\[
u_{k+1}\ge u_k-\sqrt{\Delta_k}>2\sqrt{v_k}+3\sqrt{\Delta_k}.
\]
Using \(\sqrt{v_k+\Delta_k}\le \sqrt{v_k}+\sqrt{\Delta_k}\), we get
\[
u_{k+1}>2\sqrt{v_k}+2\sqrt{\Delta_k}\ge 2\sqrt{v_k+\Delta_k}=2\sqrt{v_{k+1}}.
\]
Thus
\[
\frac{u_{k+1}^2}{v_{k+1}}>4.
\]
Also, since \(k\in J\),
\[
u_k>4\sqrt{\Delta_k},
\]
so
\[
u_{k+1}\ge u_k-\sqrt{\Delta_k}\ge \frac34\,u_k.
\]
Combining the last two inequalities,
\[
\frac{u_{k+1}^3}{v_{k+1}}
=
u_{k+1}\frac{u_{k+1}^2}{v_{k+1}}
>
4u_{k+1}
\ge
3u_k.
\]
Hence
\[
A_k
=
\frac{\Delta_k}{v_k}\left(3u_k-\frac{u_{k+1}^3}{v_{k+1}}\right)
\le 0.
\]
Now let \(k\in I\). Then
\[
A_k\le \frac{3u_k\Delta_k}{v_k}
\le
3\,\frac{(2\sqrt{v_k}+4\sqrt{\Delta_k})\Delta_k}{v_k}
=
6\frac{\Delta_k}{\sqrt{v_k}}
+
12\frac{\Delta_k^{3/2}}{v_k}.
\]
Since \(A_k\le0\) for every \(k\in J\), we conclude that
\[
\sum_{k=1}^n A_k
=
\sum_{\substack{k\le n\\k\in I}} A_k
+
\sum_{\substack{k\le n\\k\in J}} A_k
\le
\sum_{\substack{k\le n\\k\in I}} A_k
\le
6\sum_{k=1}^n \frac{\Delta_k}{\sqrt{v_k}}
+
12\sum_{k=1}^n \frac{\Delta_k^{3/2}}{v_k}.
\]
\end{proof}

The following lemma is the core scalar estimate behind the proof of Theorem~\ref{thm:weighted}. It reduces the nonlinear contribution of the FlexGrad scaling to standard adaptive quantities.

\begin{lemma*}[Lemma \ref{keybound2}]
Let \((u_k)_{k\ge1}\) and \((v_k)_{k\ge1}\) be sequences such that \(v_1>0\) and
\[
u_k\ge 0,\qquad d_k:=u_{k+1}-u_k,\qquad \Delta_k:=v_{k+1}-v_k,
\]
with
\[
d_k^2\le \Delta_k \qquad (k\ge1).
\]
Then
\begin{align*}
u_{n+1}+ \frac{1}{2} \sum_{k=1}^n
\frac{u_k}{v_k}\left(u_k^2-u_{k+1}^2+\Delta_k\right)
&\le
\frac{2}{3} \sqrt{v_{n+1}}
+\frac13\,\frac{u_1^3}{v_1}
+2\sum_{k=1}^n \frac{\Delta_k}{\sqrt{v_k}}
+\frac{13}{3} \sum_{k=1}^n \frac{\Delta_k^{3/2}}{v_k}.
\end{align*}
\end{lemma*}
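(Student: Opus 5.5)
The plan is to compare the nonlinear term \(\tfrac12 u_k(u_k^2-u_{k+1}^2)\) with a telescoping cubic potential \(u^3/v\). The leftover error terms will then be of exactly the form handled by Lemma~\ref{keybound1}. At the end, \(u_{n+1}\) will be absorbed against the negative cubic term. The constants \(2=6/3\) and \(13/3=12/3+1/3\) in the statement suggest precisely this route.

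\textbf{Step 1 (algebraic identity).} Write \(u_{k+1}=u_k+d_k\). A direct factorization gives
\[
\tfrac12 u_k(u_k^2-u_{k+1}^2)=-\tfrac13\,(u_{k+1}^3-u_k^3)+\tfrac16\,d_k^2\,(2u_{k+1}+u_k).
\]
It comes from \(\tfrac13(u_{k+1}^3-u_k^3)-\tfrac12u_k(u_{k+1}^2-u_k^2)=\tfrac{d_k}{6}(2u_{k+1}^2-u_ku_{k+1}-u_k^2)=\tfrac{d_k^2}{6}(2u_{k+1}+u_k)\).

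Now use \(d_k^2\le\Delta_k\), which also gives \(u_{k+1}\le u_k+\sqrt{\Delta_k}\). This bounds the last term by \(\Delta_k(\tfrac12u_k+\tfrac13\sqrt{\Delta_k})\).

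\textbf{Step 2 (telescoping).} Divide by \(v_k\) and use
\[
\frac{u_{k+1}^3}{v_k}=\frac{u_{k+1}^3}{v_{k+1}}+\frac{u_{k+1}^3\Delta_k}{v_kv_{k+1}}.
\]
Summing over \(k\), the cubic pieces telescope to \(\tfrac13 u_1^3/v_1-\tfrac13 u_{n+1}^3/v_{n+1}\). Adding the remaining term \(\tfrac12 u_k\Delta_k/v_k\) from the left-hand side, each summand is bounded by
\[
\frac{u_k\Delta_k}{v_k}-\frac13\frac{u_{k+1}^3\Delta_k}{v_kv_{k+1}}+\frac13\frac{\Delta_k^{3/2}}{v_k}
=\frac13 A_k+\frac13\frac{\Delta_k^{3/2}}{v_k},
\]
where \(A_k\) is the quantity appearing in Lemma~\ref{keybound1}. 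That lemma gives \(\tfrac13\sum_k A_k\le 2\sum_k\Delta_k/\sqrt{v_k}+4\sum_k\Delta_k^{3/2}/v_k\). Adding the extra \(\tfrac13\sum_k\Delta_k^{3/2}/v_k\) produces the coefficient \(13/3\).

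\textbf{Step 3 (absorbing \(u_{n+1}\)).} After Steps 1 and 2, the left-hand side of the lemma is at most
\[
u_{n+1}-\frac{u_{n+1}^3}{3v_{n+1}}+\frac13\frac{u_1^3}{v_1}+2\sum_{k=1}^n\frac{\Delta_k}{\sqrt{v_k}}+\frac{13}{3}\sum_{k=1}^n\frac{\Delta_k^{3/2}}{v_k}.
\]
The function \(u\mapsto u-u^3/(3v)\) on \(u\ge0\) is maximized at \(u=\sqrt v\), with value \(\tfrac23\sqrt v\). This yields the term \(\tfrac23\sqrt{v_{n+1}}\).

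\textbf{Main obstacle.} The only delicate point is finding the identity in Step 1 with the exact coefficients \(1/2\) and \(1/3\). The cross term \(u_{k+1}^3\Delta_k/(v_kv_{k+1})\) generated by telescoping must appear with exactly the weight that, together with \(u_k\Delta_k/v_k\), reconstitutes \(\tfrac13A_k\). Everything else is bookkeeping. Note that \(v_k>0\) for all \(k\) follows from \(v_1>0\) and \(\Delta_k\ge d_k^2\ge0\), so all divisions are legitimate.
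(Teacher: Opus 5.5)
Your proof is correct and follows essentially the same route as the paper. Your identity $\tfrac12 u_k(u_k^2-u_{k+1}^2)+\tfrac13(u_{k+1}^3-u_k^3)=\tfrac12 u_kd_k^2+\tfrac13 d_k^3$ is exactly half of the paper's decomposition of $t_k-\tfrac23(u_k^3-u_{k+1}^3)/v_k$, and from there you use the same telescoping, the same appeal to Lemma~\ref{keybound1}, and the same maximization of $u\mapsto u-u^3/(3v)$; the only cosmetic difference is that the paper bounds $2u_{n+1}+\sum_k t_k$ and divides by $2$ at the end, while you carry the factor $\tfrac12$ throughout.
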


\begin{proof}
Set
\[
t_k:=\frac{u_k}{v_k}\left(u_k^2-u_{k+1}^2+\Delta_k\right).
\]
Since
\[
u_k^2-u_{k+1}^2=-2u_kd_k-d_k^2,
\]
we have
\[
t_k=\frac{u_k\Delta_k-2u_k^2d_k-u_kd_k^2}{v_k}.
\]
Also,
\[
u_k^3-u_{k+1}^3=-3u_k^2d_k-3u_kd_k^2-d_k^3,
\]
hence
\[
\frac23\,\frac{u_k^3-u_{k+1}^3}{v_k}
=
\frac{-2u_k^2d_k-2u_kd_k^2-\frac23 d_k^3}{v_k}.
\]
Subtracting,
\[
t_k-\frac23\,\frac{u_k^3-u_{k+1}^3}{v_k}
=
\frac{u_k\Delta_k+u_kd_k^2+\frac23 d_k^3}{v_k}.
\]
Furthermore, 
\[
\frac{u_k^3-u_{k+1}^3}{v_k}
=
\left[\frac{u_k^3}{v_k}-\frac{u_{k+1}^3}{v_{k+1}}\right] - \left[\frac{u_{k+1}^3}{v_k}-\frac{u_{k+1}^3}{v_{k+1}}\right]
=
\left[\frac{u_k^3}{v_k}-\frac{u_{k+1}^3}{v_{k+1}}\right] - \frac{u_{k+1}^3 (v_{k+1}-v_k)}{v_k v_{k+1}}.
\]
Therefore
\begin{align*}
    \sum_{k=1}^n t_k
&=
\sum_{k=1}^n \frac{2}{3} \left(\left[\frac{u_k^3}{v_k}-\frac{u_{k+1}^3}{v_{k+1}}\right] - \frac{u_{k+1}^3 \Delta_k}{v_k v_{k+1}}\right)+\sum_{k=1}^n \frac{u_k\Delta_k+u_kd_k^2+\frac23 d_k^3}{v_k}
\\&=\frac23\,\frac{u_1^3}{v_1}-\frac23\,\frac{u_{n+1}^3}{v_{n+1}}
+
\sum_{k=1}^n \left[ \frac{u_k\Delta_k+u_kd_k^2+\frac23 d_k^3}{v_k} - \frac23\frac{u_{k+1}^3 \Delta_k}{v_k v_{k+1}} \right].
\end{align*}
From $d_k^2\le \Delta_k$ we get $u_kd_k^2\le u_k\Delta_k$ and $d_k^3 \le \Delta_k^{3/2}$, thus
\begin{align*}
    2u_{n+1}+\sum_{k=1}^n t_k
&\le
2u_{n+1}+\frac23\,\frac{u_1^3}{v_1}-\frac23\,\frac{u_{n+1}^3}{v_{n+1}}
+
\sum_{k=1}^n \left[ \frac{u_k\Delta_k+u_k\Delta_k+\frac23 \Delta_k^{3/2}}{v_k} - \frac23\frac{u_{k+1}^3 \Delta_k}{v_k v_{k+1}} \right]
\\&= 2u_{n+1}-\frac23\,\frac{u_{n+1}^3}{v_{n+1}}+\frac23\,\frac{u_1^3}{v_1}
+\frac23\sum_{k=1}^n \left[\frac{3u_k\Delta_k}{v_k} - \frac{u_{k+1}^3 \Delta_k}{v_k v_{k+1}}\right]
+\frac{2}{3} \sum_{k=1}^n \frac{\Delta_k^{3/2}}{v_k}.
\end{align*}
Maximization in $u_{n+1}\ge 0$ gives
\[
2u_{n+1}-\frac23\,\frac{u_{n+1}^3}{v_{n+1}} \le \frac{4}{3} \sqrt{v_{n+1}}.
\]
By Lemma \ref{keybound1},
\[
\frac23\sum_{k=1}^n \left[\frac{3u_k\Delta_k}{v_k}
-\frac{u_{k+1}^3\Delta_k}{v_kv_{k+1}}\right]
\le
4\sum_{k=1}^n \frac{\Delta_k}{\sqrt{v_k}}
+
8\sum_{k=1}^n \frac{\Delta_k^{3/2}}{v_k}.
\]
Therefore
\begin{align*}
2u_{n+1}+ \sum_{k=1}^n t_k
&\le
\frac{4}{3} \sqrt{v_{n+1}}
+\frac23\,\frac{u_1^3}{v_1}
+4\sum_{k=1}^n \frac{\Delta_k}{\sqrt{v_k}}
+\frac{26}{3} \sum_{k=1}^n \frac{\Delta_k^{3/2}}{v_k}.
\end{align*}
Dividing by $2$ and substituting back $t_k$ gives the lemma.
\end{proof}

We now apply the scalar estimates with \(u_k=\|s_k\|\) and \(v_k=\sum_{i=0}^{k-1}\lambda_i^2\|g_i\|^2\). The resulting bound has the same basic form as in AdaGrad, up to the factor \(1+\rho_n\).

\begin{theorem*}[Theorem \ref{thm:weighted}]
In Algorithm \ref{algo1}, for every \(n\ge1\),
\[
f(\hat x_n)-f_*
\le
14\widehat D(1+\rho_n)
\frac{\sqrt{\sum_{k=0}^{n}\lambda_k^2 \|g_k\|^2}}{\sum_{k=0}^n \lambda_k},
\]
where
\[
\rho_n:=
\max_{1\le k\le n}
\frac{\lambda_k^2\|g_k\|^2}{\sum_{i=0}^{k-1}\lambda_i^2\|g_i\|^2}.
\]
\end{theorem*}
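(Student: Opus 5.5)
The plan is to reduce the theorem to the scalar estimate of Lemma~\ref{keybound2} and then control the auxiliary sums with Lemma~\ref{sumbounds}. If \(g_0=0\), the algorithm returns a minimizer and there is nothing to prove, so assume \(g_0\neq0\). Then \(v_1:=\lambda_0^2\|g_0\|^2>0\).

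\textbf{Step 1: convexity and Jensen.} By convexity of \(f\) and Jensen's inequality applied to the weighted average \(\hat x_n\),
\[
\Big(\sum_{k=0}^n\lambda_k\Big)\bigl(f(\hat x_n)-f_*\bigr)\le\sum_{k=0}^n\lambda_k\bigl(f(x_k)-f_*\bigr)\le\sum_{k=0}^n\lambda_k\langle g_k,x_k-x_*\rangle .
\]
Write \(x_k=x_0-\gamma_k s_k\), with the convention \(\gamma_0s_0=0\). The right-hand side then splits into two parts. The first is \(\langle s_{n+1},x_0-x_*\rangle\le\widehat D\|s_{n+1}\|\), by Cauchy--Schwarz and \(\widehat D\ge\|x_0-x_*\|\). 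The second is \(-\sum_k\gamma_k\langle \lambda_kg_k,s_k\rangle\).

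\textbf{Step 2: polarization.} Since \(\lambda_kg_k=s_{k+1}-s_k\), polarization gives
\[
-\langle\lambda_kg_k,s_k\rangle=\tfrac12\bigl(\|s_k\|^2-\|s_{k+1}\|^2+\lambda_k^2\|g_k\|^2\bigr).
\]
Set
\[
u_k=\|s_k\|,\qquad v_k=\sum_{i=0}^{k-1}\lambda_i^2\|g_i\|^2,\qquad \Delta_k=\lambda_k^2\|g_k\|^2 .
\]
Then \(\gamma_k=\widehat Du_k/v_k\) for \(k\ge1\), and the \(k=0\) term vanishes because \(s_0=0\). The whole bound becomes exactly \(\widehat D\) times the left-hand side of Lemma~\ref{keybound2}. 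The lemma's hypothesis \(d_k^2\le\Delta_k\) holds by the reverse triangle inequality: \(|\|s_{k+1}\|-\|s_k\||\le\lambda_k\|g_k\|\).

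\textbf{Step 3: apply the lemmas.} Applying Lemma~\ref{keybound2} requires simplifying \(u_1^3/v_1\). Since \(u_1=\lambda_0\|g_0\|=\sqrt{v_1}\), we get \(u_1^3/v_1=\sqrt{v_1}\le\sqrt{v_{n+1}}\). The \(\rho_n\) of Lemma~\ref{sumbounds} coincides with the theorem's \(\rho_n\), since \(\Delta_k/v_k=\lambda_k^2\|g_k\|^2/\sum_{i<k}\lambda_i^2\|g_i\|^2\). Lemma~\ref{sumbounds} bounds both sums by \((1+\sqrt{1+\rho_n})\sqrt{v_{n+1}}\), the second with an extra factor \(\sqrt{\rho_n}\). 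This yields
\[
\sum_k\lambda_k(f(x_k)-f_*)\le\widehat D\sqrt{v_{n+1}}\Bigl[\tfrac23+\tfrac13+\bigl(2+\tfrac{13}{3}\sqrt{\rho_n}\bigr)\bigl(1+\sqrt{1+\rho_n}\bigr)\Bigr],
\]
where \(v_{n+1}=\sum_{k=0}^n\lambda_k^2\|g_k\|^2\).

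\textbf{Step 4: constants.} Use \(1+\sqrt{1+\rho_n}\le2\sqrt{1+\rho_n}\) and \(2+\tfrac{13}{3}\sqrt{\rho_n}\le\tfrac{19}{3}\sqrt{1+\rho_n}\). The bracket is then at most \(1+\tfrac{38}{3}(1+\rho_n)\le14(1+\rho_n)\). Dividing by \(\sum_k\lambda_k\) gives the claim.

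The main risk is bookkeeping: checking that the indices of the algorithm's \(s_k,\gamma_k\) align exactly with the lemma's \(u_k,v_k\), including the \(k=0\) term and the endpoint \(x_n\) using \(\gamma_n\). The constant at the end needs only the crude estimates above, which leave slack.
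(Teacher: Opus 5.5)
Your proposal is correct and follows the paper's proof essentially step for step. The shared steps are convexity plus Jensen, then the polarization identity, which reduces the bound to $\widehat D$ times the left-hand side of Lemma~\ref{keybound2} with $u_k=\|s_k\|$, $v_k=\sum_{i<k}\lambda_i^2\|g_i\|^2$ and $u_1^3/v_1=\sqrt{v_1}$, and finally Lemma~\ref{sumbounds}. The only difference is cosmetic bookkeeping in the constants: you discard the $-\sqrt{v_1}$ terms and pass through $\sqrt{1+\rho_n}$ rather than bounding directly by $2(1+\rho_n)$, and this still gives $\tfrac{41}{3}(1+\rho_n)\le 14(1+\rho_n)$.
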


\begin{proof}
In the case where \(\lambda_0 g_0 = 0\), the estimate is trivial. Thus we assume that \(v_1=\lambda_0^2\|g_0\|^2>0\).

For each \(k\ge 0\), convexity of \(f\) and the fact that \(g_k\in\partial f(x_k)\) give
\[
f(x_k)-f_*
\le
\langle g_k,x_k-x_*\rangle.
\]
Multiplying by \(\lambda_k\),
\[
\lambda_k\left(f(x_k)-f_*\right)
\le
\langle \lambda_k g_k,x_k-x_*\rangle
=
\langle s_{k+1}-s_k,x_k-x_*\rangle.
\]
Since \(x_k=x_0-\gamma_k s_k\), we get
\[
\lambda_k\left(f(x_k)-f_*\right)
\le
\langle s_{k+1}-s_k,x_0-x_*\rangle-\gamma_k\langle s_{k+1}-s_k,s_k\rangle.
\]
Summing from \(k=0\) to \(n\),
\[
\sum_{k=0}^{n}\lambda_k\left(f(x_k)-f_*\right)
\le
\left\langle \sum_{k=0}^{n} (s_{k+1}-s_k),\; x_0-x_*\right\rangle
-
\sum_{k=0}^{n}\gamma_k\langle s_{k+1}-s_k,s_k\rangle.
\]
Since \(s_0=0\), we obtain
\begin{equation}\label{convexitysum}
\begin{aligned}
        \sum_{k=0}^{n}\lambda_k\left(f(x_k)-f_*\right)
&\le
\|s_{n+1}\|\,\|x_0-x_*\|
-
\sum_{k=1}^{n}\gamma_k\langle s_{k+1}-s_k,s_k\rangle
\\&\le \widehat D\|s_{n+1}\|
-
\sum_{k=1}^{n}\gamma_k\langle s_{k+1}-s_k,s_k\rangle
\end{aligned}
\end{equation}
Now define the scalar sequences
\[
u_k:=\|s_k\|,\qquad v_k:=\sum_{i=0}^{k-1}\lambda_i^2\|g_i\|^2\qquad (k\ge 1)
\]
and their successive differences
\[
d_k := u_{k+1}-u_k, \qquad \Delta_k := v_{k+1}-v_k.
\]
Then
\[
\gamma_k=\frac{\widehat Du_k}{v_k},\qquad \Delta_k=\lambda_k^2\|g_k\|^2,\qquad u_k\ge 0,\qquad v_1>0.
\]
Moreover,
\[
u_{k+1}^2
=
\|s_k+\lambda_k g_k\|^2
=
u_k^2+2\langle \lambda_k g_k,s_k\rangle+\lambda_k^2\|g_k\|^2,
\]
so
\[
2\langle -(s_{k+1}-s_k),s_k\rangle
=
u_k^2-u_{k+1}^2+\lambda_k^2\|g_k\|^2
=
u_k^2-u_{k+1}^2+\Delta_k.
\]
Therefore
\begin{equation}\label{eqgammasum}
    \frac{\widehat D}{2}\frac{u_k}{v_k}\left(u_k^2-u_{k+1}^2+\Delta_k\right)
=
-\gamma_k\langle s_{k+1}-s_k,s_k\rangle.
\end{equation}
Also,
\[
|d_k|
=
\left|\|s_k+\lambda_k g_k\|-\|s_k\|\right|
\le
\lambda_k\|g_k\|,
\]
hence
\[
d_k^2\le \lambda_k^2\|g_k\|^2=\Delta_k.
\]
We may then apply Lemma \ref{keybound2} with the scalar sequences \((u_k)\) and \((v_k)\):
\begin{align*}
u_{n+1}+ \frac{1}{2} \sum_{k=1}^n
\frac{u_k}{v_k}\left(u_k^2-u_{k+1}^2+\Delta_k\right)
&\le
\frac{2}{3} \sqrt{v_{n+1}}
+\frac13\,\frac{u_1^3}{v_1}
+2\sum_{k=1}^n \frac{\Delta_k}{\sqrt{v_k}}
+\frac{13}{3} \sum_{k=1}^n \frac{\Delta_k^{3/2}}{v_k},
\end{align*}
with \(u_1^3/v_1=\sqrt{v_1}\). Notice that
\[
\rho_n=\max_{1\le k\le n}\frac{\Delta_k}{v_k}.
\]
Applying Lemma~\ref{sumbounds},
\begin{align*}
2\sum_{k=1}^n \frac{\Delta_k}{\sqrt{v_k}}
&\le
2\left(1+\sqrt{1+\rho_n}\right)(\sqrt{v_{n+1}}-\sqrt{v_1}) \le 4 (1+\rho_n) (\sqrt{v_{n+1}}-\sqrt{v_1}), \\
\frac{13}{3}\sum_{k=1}^n \frac{\Delta_k^{3/2}}{v_k}
&\le
\frac{13}{3}\sqrt{\rho_n}\,\left(1+\sqrt{1+\rho_n}\right)(\sqrt{v_{n+1}}-\sqrt{v_1}) \le \frac{26}{3} (1+\rho_n) (\sqrt{v_{n+1}}-\sqrt{v_1}).
\end{align*}
Hence
\begin{align*}
u_{n+1}+ \frac{1}{2} \sum_{k=1}^n
\frac{u_k}{v_k}\left(u_k^2-u_{k+1}^2+\Delta_k\right)
&\le
\frac23 \sqrt{v_{n+1}}
+\frac13\sqrt{v_1} 
+ \frac{38}{3} (1+\rho_n)
(\sqrt{v_{n+1}}-\sqrt{v_1})
\\&\le 14 (1+\rho_n) \sqrt{v_{n+1}}.
\end{align*}
Multiplying by $\widehat D$ and using \eqref{convexitysum} and \eqref{eqgammasum}, we obtain
\[
\sum_{k=0}^{n}\lambda_k\left(f(x_k)-f_*\right)
\le
14 \widehat D(1+\rho_n) \sqrt{v_{n+1}}.
\]
Finally, by Jensen's inequality, we get
\[
f(\hat x_n)-f_*
\le
\frac{1}{\sum_{k=0}^n \lambda_k}14 \widehat D(1+\rho_n) \sqrt{v_{n+1}},
\]
which gives the result after substituting back $v_{n+1}$.
\end{proof}

The next corollary makes explicit when the general estimate recovers the standard \(n^{-1/2}\) rate. The only additional issue is whether the gradients vanish too quickly when the weights grow.

\begin{theorem*}[Theorem \ref{thm:weighted_poly}]
Assume that, in Algorithm~\ref{algo1}, the weights satisfy
\[
\lambda_k \asymp k^\alpha \qquad (k\ge 1)
\]
for some \(\alpha>-\tfrac12\). Define
\[
\delta_n:=
\begin{cases}
1, & \text{if} \ \ \alpha\le 0,\\[4pt]
\displaystyle \min_{1\le k\le n}\sqrt{k}\,\|g_k\|, & \text{if} \ \ \alpha>0.
\end{cases}
\]
Then
\[
f(\hat x_n)-f_*
=
\mathcal O\!\left(\frac1{\delta_n^2 \, \sqrt n}\right).
\]
\end{theorem*}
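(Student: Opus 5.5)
The plan is to plug the polynomial weights into Theorem~\ref{thm:weighted}. Lipschitz continuity supplies a uniform bound $\|g_k\|\le G$, and it then suffices to control three quantities separately: the denominator $\sum_{k=0}^n\lambda_k$, the numerator $\sqrt{v_{n+1}}=\sqrt{\sum_{k=0}^n\lambda_k^2\|g_k\|^2}$, and the irregularity factor $\rho_n$.

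\textbf{Denominator and numerator.} Since $\alpha>-\tfrac12>-1$, the weights satisfy $\sum_{k=0}^n\lambda_k\asymp n^{1+\alpha}$. Because $2\alpha>-1$, they also satisfy $\sum_{k=0}^n\lambda_k^2\asymp n^{1+2\alpha}$. Combining the second estimate with $\|g_k\|\le G$ gives
\[
\sqrt{v_{n+1}}\lesssim G\,n^{1/2+\alpha}.
\]
Hence the ratio $\sqrt{v_{n+1}}/\sum_k\lambda_k$ is $\mathcal O(n^{-1/2})$. It therefore remains to show $1+\rho_n=\mathcal O(\delta_n^{-2})$, with constants depending only on $G$, $\lambda_0$, $\|g_0\|$, $\alpha$ and the $\asymp$ constants.

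\textbf{Bounding $\rho_n$.} The lower bound $v_k\ge v_1=\lambda_0^2\|g_0\|^2>0$ always holds, and by Theorem~\ref{thm:weighted} we may assume $g_0\neq0$.

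In the case $\alpha\le0$, the weights are bounded: $\lambda_k\lesssim1$. Then
\[
\frac{\lambda_k^2\|g_k\|^2}{v_k}\le \frac{C G^2}{\lambda_0^2\|g_0\|^2},
\]
so $\rho_n=\mathcal O(1)=\mathcal O(\delta_n^{-2})$.

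In the case $\alpha>0$, write $\delta:=\delta_n$, so that $\|g_i\|\ge\delta/\sqrt i$ for every $1\le i\le n$. For $k\ge2$ this gives
\[
v_k\ge\sum_{i=1}^{k-1}\lambda_i^2\|g_i\|^2\gtrsim\delta^2\sum_{i=1}^{k-1}i^{2\alpha-1}\asymp\delta^2 k^{2\alpha},
\]
where the last step uses $2\alpha-1>-1$. The numerator satisfies $\lambda_k^2\|g_k\|^2\lesssim G^2k^{2\alpha}$, so the ratio is $\lesssim G^2/\delta^2$. For $k=1$, the bound $v_1>0$ gives a constant directly.

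Combining the two cases, $1+\rho_n\lesssim 1+\delta_n^{-2}$. Since $\delta_n\le G$ (take the $k=1$ term, $\delta_n\le\|g_1\|\le G$), we have $1\lesssim\delta_n^{-2}$, and so $1+\rho_n=\mathcal O(\delta_n^{-2})$. Together with the first step, this yields the claimed rate $\mathcal O\bigl(1/(\delta_n^2\sqrt n)\bigr)$.

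\textbf{Main obstacle.} The delicate step is the lower bound on $v_k$ when $\alpha>0$. The comparison $\sum_{i<k}i^{2\alpha-1}\asymp k^{2\alpha}$ must hold uniformly in $k$, including small $k$, which is why $k=1$ is handled separately via $v_1$. I would also handle $\delta_n=0$ by convention: the bound is then vacuous.
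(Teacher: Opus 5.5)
Your proposal is correct and follows the paper's proof. You bound $\sqrt{v_{n+1}}/\sum_k\lambda_k$ by $\mathcal O(n^{-1/2})$ using $\|g_k\|\le G$, bound $\rho_n$ by a constant when $\alpha\le 0$ via $v_k\ge\lambda_0^2\|g_0\|^2$, bound $\rho_n$ by $\mathcal O(G^2/\delta_n^2)$ when $\alpha>0$ via $v_k\gtrsim\delta_n^2k^{2\alpha}$, and then invoke Theorem~\ref{thm:weighted}. Your termwise bound $\|g_i\|^2\ge\delta_n^2/i$, your separate treatment of $k=1$, and your use of $\delta_n\le G$ to absorb the $1$ in $1+\rho_n$ are in fact slightly more careful than the paper: its bound $\min_{0\le i\le k}\|g_i\|^2\ge\delta_n^2/k$ includes $g_0$, which $\delta_n$ does not control.
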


\begin{proof}
Since \(\lambda_k\asymp k^\alpha\) with \(\alpha>-\tfrac12\), we have
\[
\sum_{k=1}^n \lambda_k \asymp n^{\alpha+1},
\qquad
\sum_{k=1}^n \lambda_k^2 \asymp n^{2\alpha+1}.
\]
From that and $G$-Lipschitzness of $f$,
\[
\frac{\sqrt{\sum_{k=0}^{n}\lambda_k^2\|g_k\|^2}}{\sum_{k=0}^n \lambda_k}
\le G\frac{\sqrt{\sum_{k=0}^{n}\lambda_k^2}}{\sum_{k=0}^n \lambda_k}
\le G\frac{\sqrt{\lambda_0}+\sqrt{\sum_{k=1}^{n}\lambda_k^2}}{\lambda_0+\sum_{k=1}^n \lambda_k}
=\mathcal O\!\left(\frac1{\sqrt n}\right).
\]

If \(\alpha\le 0\), then \((\lambda_k)\) is bounded, and therefore
\[
\rho_n:=
\max_{1\le k\le n}
\frac{\lambda_k^2\|g_k\|^2}{\sum_{i=0}^{k-1}\lambda_i^2\|g_i\|^2}
\]
is bounded above, since $\|g_k\|^2 \le G^2$ and
\[
\sum_{i=0}^{k-1}\lambda_i^2\|g_i\|^2\ge \lambda_0^2\|g_0\|^2 >0.
\]

Assume now that \(\alpha>0\). Since \(\sqrt{k}\,\|g_k\|\ge \delta_n\) for \(1\le k\le n\), we have
\[
\|g_i\|\ge \frac{\delta_n}{\sqrt{i}}\ge \frac{\delta_n}{\sqrt{k}}
\qquad (1\le i\le k),
\]
hence
\[
\min_{0\le i\le k}\|g_i\|^2\ge \frac{\delta_n^2}{k}.
\]
Therefore,
\[
\sum_{i=0}^{k-1}\lambda_i^2\|g_i\|^2
\ge
\left(\min_{0\le i\le k}\|g_i\|^2\right)\sum_{i=0}^{k-1}\lambda_i^2
\ge
\frac{\delta_n^2}{k}\sum_{i=0}^{k-1}\lambda_i^2.
\]
Using also \(\|g_k\|\le G\), we obtain
\[
\rho_n
=
\max_{1\le k\le n}
\frac{\lambda_k^2\|g_k\|^2}{\sum_{i=0}^{k-1}\lambda_i^2\|g_i\|^2}
\le
\frac{G^2}{\delta_n^2}
\max_{1\le k\le n}
\frac{k\,\lambda_k^2}{\sum_{i=0}^{k-1}\lambda_i^2}.
\]
Since 
\[
\lambda_k^2\asymp k^{2\alpha},
\qquad
\sum_{i=0}^{k-1}\lambda_i^2
\asymp
k^{2\alpha+1},
\]
we have
\[
\frac{k\,\lambda_k^2}{\sum_{i=0}^{k-1}\lambda_i^2} = \mathcal O(1),
\]
and therefore
\[
\rho_n \lesssim \frac{G^2}{\delta_n^2}.
\]
The conclusion now follows straightforwardly from Theorem \ref{thm:weighted}.
\end{proof}

The same scalar mechanism extends to the coordinate-wise method. The proof proceeds componentwise and yields a bound of the same form as in the global case.

\begin{theorem*}[Theorem \ref{thm:weighted_coord}]
In Algorithm \ref{algo1_coord}, for every $n\geq 1$,
$$
f(\hat x_n)-f_*
\leq
14\|\vec D\|(1+\rho_n)
\frac{\sqrt{\sum_{k=0}^{n}\lambda_k^2\|g_k\|^2}}
{\sum_{k=0}^n\lambda_k},
$$
where
$$
\rho_n := \max\left\{ 0, \max_{\substack{1 \leq k \leq n, 1 \leq j \leq d \\ v_{k,j} > 0}} \frac{\lambda_k^2 g_{k,j}^2}{v_{k,j}} \right\}, \quad \text{where } v_{k,j} := \sum_{i=0}^{k-1} \lambda_i^2 g_{i,j}^2.
$$
\end{theorem*}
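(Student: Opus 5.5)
We follow the proof of Theorem~\ref{thm:weighted}, but apply the scalar machinery one coordinate at a time. We then combine the per-coordinate bounds with Cauchy--Schwarz.

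\textbf{Convexity step.} Since $g_k\in\partial f(x_k)$ and $s_0=0$, convexity gives
\[
\sum_{k=0}^n\lambda_k(f(x_k)-f_*)\le\sum_{j=1}^d\Big[s_{n+1,j}(x_{0,j}-x_{*,j})-\sum_{k=0}^n\gamma_{k,j}s_{k,j}(s_{k+1,j}-s_{k,j})\Big].
\]
Using $|x_{0,j}-x_{*,j}|\le D_j$, each bracket is at most $D_j|s_{n+1,j}|$ minus the coordinate cross term.

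\textbf{Scalar sequences.} For a fixed coordinate $j$, set $u_k=|s_{k,j}|$ and $v_k=v_{k,j}=\sum_{i<k}\lambda_i^2g_{i,j}^2$, so that $\Delta_k=\lambda_k^2g_{k,j}^2$. The update gives $\gamma_{k,j}=D_ju_k/v_k$, and the one-dimensional identity $-2s_{k,j}(s_{k+1,j}-s_{k,j})=u_k^2-u_{k+1}^2+\Delta_k$ holds. The increments also satisfy $d_k^2\le\Delta_k$. Hence coordinate $j$ contributes exactly $D_j$ times the left-hand side of Lemma~\ref{keybound2}.

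\textbf{Degenerate starts.} The one new difficulty is that $v_{1,j}=\lambda_0^2g_{0,j}^2$ may vanish. We handle this by letting $k_j$ be the first index with $v_{k,j}>0$. For $k<k_j$ all gradient components in coordinate $j$ vanish, so $s_{k,j}=0$ and these terms contribute nothing; the $0/0=0$ convention makes $\gamma_{k,j}=0$, consistent with this. We then apply Lemma~\ref{keybound2} to the shifted sequences starting at $k_j$, taking the stated $\rho_n$ restricted to $v_{k,j}>0$ (and $\rho_n=0$ if coordinate $j$ never activates). At the start we have $u_{k_j}^3/v_{k_j}=\sqrt{v_{k_j}}$, because $u_{k_j}^2=v_{k_j}$ (a single nonzero term). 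Lemma~\ref{sumbounds} then gives $\sum_k\Delta_k/\sqrt{v_k}$ and $\sum_k\Delta_k^{3/2}/v_k$ bounded by $(1+\rho_n)$ times $\sqrt{v_{n+1,j}}-\sqrt{v_{k_j,j}}$, with the same constants as in the global proof. Exactly as there, the per-coordinate quantity is at most $14(1+\rho_n)\sqrt{v_{n+1,j}}$. I expect this re-indexing, together with checking that the $0/0$ convention doesn't break the identities, to be the main obstacle. The rest is verbatim.

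\textbf{Combining coordinates.} Summing over $j$ gives
\[
\sum_k\lambda_k(f(x_k)-f_*)\le14(1+\rho_n)\sum_jD_j\sqrt{v_{n+1,j}}.
\]
By Cauchy--Schwarz,
\[
\sum_jD_j\sqrt{v_{n+1,j}}\le\|\vec D\|\Big(\sum_j v_{n+1,j}\Big)^{1/2}=\|\vec D\|\Big(\sum_{k=0}^n\lambda_k^2\|g_k\|^2\Big)^{1/2}.
\]
Finally, Jensen's inequality applied to $\hat x_n$ yields the stated bound.
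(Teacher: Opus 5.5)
Your proposal is correct and follows essentially the same route as the paper. The paper also shifts each coordinate to its first activation (its $\tau_j$ is your $k_j-1$), applies Lemma~\ref{keybound2} and Lemma~\ref{sumbounds} with a per-coordinate $\rho_{n,j}\le\rho_n$ to get $14(1+\rho_{n,j})\sqrt{v_{n+1,j}}$, and then sums over $j$ using Cauchy--Schwarz and Jensen. One small wording slip: $\lambda_{k_j-1}g_{k_j-1,j}\neq 0$, so it is not true that all coordinate-$j$ gradient components vanish for $k<k_j$; what you actually use, $s_{k,j}=0$ and hence vanishing cross terms for $k<k_j$, is correct.
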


\begin{proof}
For every $k\geq 0$, convexity of $f$ and
$g_k\in\partial f(x_k)$ give
$$
f(x_k)-f_*
\leq
\langle g_k,x_k-x_*\rangle.
$$
Multiplying by $\lambda_k$ yields
$$
\lambda_k\bigl(f(x_k)-f_*\bigr)
\leq
\langle s_{k+1}-s_k,x_k-x_*\rangle.
$$
Since $x_k=x_0-\gamma_k\odot s_k$,
$$
\lambda_k\bigl(f(x_k)-f_*\bigr)
\leq
\langle s_{k+1}-s_k,x_0-x_*\rangle
-
\langle s_{k+1}-s_k,\gamma_k\odot s_k\rangle.
$$
Summing from $k=0$ to $n$ gives
\begin{align*}
\sum_{k=0}^{n}\lambda_k\bigl(f(x_k)-f_*\bigr)
&\leq
\left\langle
\sum_{k=0}^{n}(s_{k+1}-s_k),x_0-x_*
\right\rangle
-
\sum_{k=0}^{n}
\langle s_{k+1}-s_k,\gamma_k\odot s_k\rangle\\
&=
\langle s_{n+1},x_0-x_*\rangle
-
\sum_{k=1}^{n}
\langle s_{k+1}-s_k,\gamma_k\odot s_k\rangle.
\end{align*}
Therefore,
\begin{equation}\label{convexitysumcoord}
\begin{aligned}
\sum_{k=0}^{n}\lambda_k\bigl(f(x_k)-f_*\bigr)
&\leq
\sum_{j=1}^d
|s_{n+1,j}|\,|(x_0-x_*)_j|
-
\sum_{k=1}^{n}\sum_{j=1}^d
\gamma_{k,j}(s_{k+1,j}-s_{k,j})s_{k,j}\\
&\leq
\sum_{j=1}^d
\vec D_j|s_{n+1,j}|
-
\sum_{k=1}^{n}\sum_{j=1}^d
\gamma_{k,j}(s_{k+1,j}-s_{k,j})s_{k,j}.
\end{aligned}
\end{equation}

Fix a coordinate $j\in\{1,\ldots,d\}$. If $\lambda_k g_{k,j}=0$ for every $0\leq k\leq n$, then $s_{k,j}=v_{k,j}=0$ (and $\gamma_{k,j}=0$) for all $k$, so this coordinate contributes exactly zero to \eqref{convexitysumcoord}. Thus, we may assume the coordinate is active and define its first activation index:
$$
\tau_j
:=
\min\left\{
k\in\{0,\ldots,n\}:
\lambda_k g_{k,j}\neq 0
\right\}.
$$
For $k\leq \tau_j$, we have $s_{k,j}=v_{k,j}=0$,
whereas
$$
s_{\tau_j+1,j}
=
\lambda_{\tau_j}g_{\tau_j,j},
\qquad
v_{\tau_j+1,j}
=
\lambda_{\tau_j}^2g_{\tau_j,j}^2>0.
$$
In particular,
$$
|s_{\tau_j+1,j}|
=
\sqrt{v_{\tau_j+1,j}}.
$$

For $k\geq \tau_j+1$, set
$$
u_{k,j}:=|s_{k,j}|,
\qquad
v_{k,j}:=
\sum_{i=0}^{k-1}\lambda_i^2g_{i,j}^2,
$$
and
$$
d_{k,j}:=u_{k+1,j}-u_{k,j},
\qquad
\Delta_{k,j}:=v_{k+1,j}-v_{k,j}.
$$
Then
$$
\gamma_{k,j}
=
\frac{\vec D_j u_{k,j}}{v_{k,j}},
\qquad
\Delta_{k,j}
=
\lambda_k^2g_{k,j}^2.
$$
Moreover,
\begin{align*}
u_{k+1,j}^2
&=
s_{k+1,j}^2\\
&=
\bigl(s_{k,j}+\lambda_k g_{k,j}\bigr)^2\\
&=
u_{k,j}^2
+
2(s_{k+1,j}-s_{k,j})s_{k,j}
+
\Delta_{k,j}.
\end{align*}
Hence
$$
2\bigl(-(s_{k+1,j}-s_{k,j})s_{k,j}\bigr)
=
u_{k,j}^2-u_{k+1,j}^2+\Delta_{k,j},
$$
and therefore
\begin{equation}\label{eqgammasumcoord}
\frac{\vec D_j}{2}
\frac{u_{k,j}}{v_{k,j}}
\left(
u_{k,j}^2-u_{k+1,j}^2+\Delta_{k,j}
\right)
=
-\gamma_{k,j}
(s_{k+1,j}-s_{k,j})s_{k,j}.
\end{equation}
Furthermore,
$$
|d_{k,j}|
=
\bigl||s_{k+1,j}|-|s_{k,j}|\bigr|
\leq
|s_{k+1,j}-s_{k,j}|
=
\lambda_k|g_{k,j}|,
$$
so
$$
d_{k,j}^2\leq\Delta_{k,j}.
$$

Apply Lemma~\ref{keybound2} to the shifted sequences
$$
\widetilde u_{r,j}
:=
u_{\tau_j+r,j},
\qquad
\widetilde v_{r,j}
:=
v_{\tau_j+r,j},
\qquad
r\geq 1.
$$
Their initial values satisfy
$$
\widetilde v_{1,j}
=
v_{\tau_j+1,j}>0,
\qquad
\frac{\widetilde u_{1,j}^3}
{\widetilde v_{1,j}}
=
\sqrt{\widetilde v_{1,j}}.
$$
Thus,
\begin{align*}
&u_{n+1,j}
+
\frac12
\sum_{k=\tau_j+1}^{n}
\frac{u_{k,j}}{v_{k,j}}
\left(
u_{k,j}^2-u_{k+1,j}^2+\Delta_{k,j}
\right)\\
&\leq
\frac23\sqrt{v_{n+1,j}}
+
\frac13\sqrt{v_{\tau_j+1,j}}
+
2\sum_{k=\tau_j+1}^{n}
\frac{\Delta_{k,j}}{\sqrt{v_{k,j}}}
+
\frac{13}{3}
\sum_{k=\tau_j+1}^{n}
\frac{\Delta_{k,j}^{3/2}}{v_{k,j}}.
\end{align*}

Define
$$
\rho_{n,j}
:=
\max\left(
\left\{
\frac{\Delta_{k,j}}{v_{k,j}}:
\tau_j+1\leq k\leq n
\right\}
\cup\{0\}
\right).
$$
By Lemma~\ref{sumbounds},
\begin{align*}
2\sum_{k=\tau_j+1}^{n}
\frac{\Delta_{k,j}}{\sqrt{v_{k,j}}}
&\leq
4(1+\rho_{n,j})
\left(
\sqrt{v_{n+1,j}}
-
\sqrt{v_{\tau_j+1,j}}
\right),\\
\frac{13}{3}
\sum_{k=\tau_j+1}^{n}
\frac{\Delta_{k,j}^{3/2}}{v_{k,j}}
&\leq
\frac{26}{3}(1+\rho_{n,j})
\left(
\sqrt{v_{n+1,j}}
-
\sqrt{v_{\tau_j+1,j}}
\right).
\end{align*}
Consequently,
\begin{align*}
&u_{n+1,j}
+
\frac12
\sum_{k=\tau_j+1}^{n}
\frac{u_{k,j}}{v_{k,j}}
\left(
u_{k,j}^2-u_{k+1,j}^2+\Delta_{k,j}
\right)\\
&\leq
\frac23\sqrt{v_{n+1,j}}
+
\frac13\sqrt{v_{\tau_j+1,j}}
+
\frac{38}{3}(1+\rho_{n,j})
\left(
\sqrt{v_{n+1,j}}
-
\sqrt{v_{\tau_j+1,j}}
\right)\\
&\leq
14(1+\rho_{n,j})\sqrt{v_{n+1,j}}.
\end{align*}

For $k<\tau_j$, we have
$s_{k+1,j}-s_{k,j}=0$, while for $k=\tau_j$,
$\gamma_{\tau_j,j}=0$. Therefore, the omitted terms vanish, and
\eqref{eqgammasumcoord} gives
\begin{align*}
&\vec D_j|s_{n+1,j}|
-
\sum_{k=1}^{n}
\gamma_{k,j}(s_{k+1,j}-s_{k,j})s_{k,j}\\
&\leq
14\vec D_j(1+\rho_{n,j})\sqrt{v_{n+1,j}}.
\end{align*}

Summing over $j=1,\ldots,d$ and using
\eqref{convexitysumcoord}, we obtain
$$
\sum_{k=0}^{n}
\lambda_k\bigl(f(x_k)-f_*\bigr)
\leq
14\sum_{j=1}^d
\vec D_j(1+\rho_{n,j})\sqrt{v_{n+1,j}}.
$$
By definition, $\rho_{n,j}\leq\rho_n$, and hence
$$
\sum_{k=0}^{n}
\lambda_k\bigl(f(x_k)-f_*\bigr)
\leq
14(1+\rho_n)
\sum_{j=1}^d
\vec D_j\sqrt{v_{n+1,j}}.
$$
By Jensen's inequality and Cauchy--Schwarz,
\begin{align*}
f(\hat x_n)-f_*
&\leq
14(1+\rho_n)
\frac{
\sum_{j=1}^d\vec D_j\sqrt{v_{n+1,j}}
}{
\sum_{k=0}^n\lambda_k
}\\
&\leq
14(1+\rho_n)\|\vec D\|
\frac{
\sqrt{\sum_{j=1}^dv_{n+1,j}}
}{
\sum_{k=0}^n\lambda_k
}.
\end{align*}
Finally,
$$
\sum_{j=1}^dv_{n+1,j}
=
\sum_{k=0}^{n}\lambda_k^2\|g_k\|^2,
$$
which proves
$$
f(\hat x_n)-f_*
\leq
14\|\vec D\|(1+\rho_n)
\frac{
\sqrt{\sum_{k=0}^{n}\lambda_k^2\|g_k\|^2}
}{
\sum_{k=0}^n\lambda_k
}.
$$
\end{proof}

\subsection{Convex Experiments}
\newcommand{\figscale}{0.2} 

\begin{figure}[htbp]
    \centering
    \begin{tabular}{ccc}
        \includegraphics[scale=\figscale]{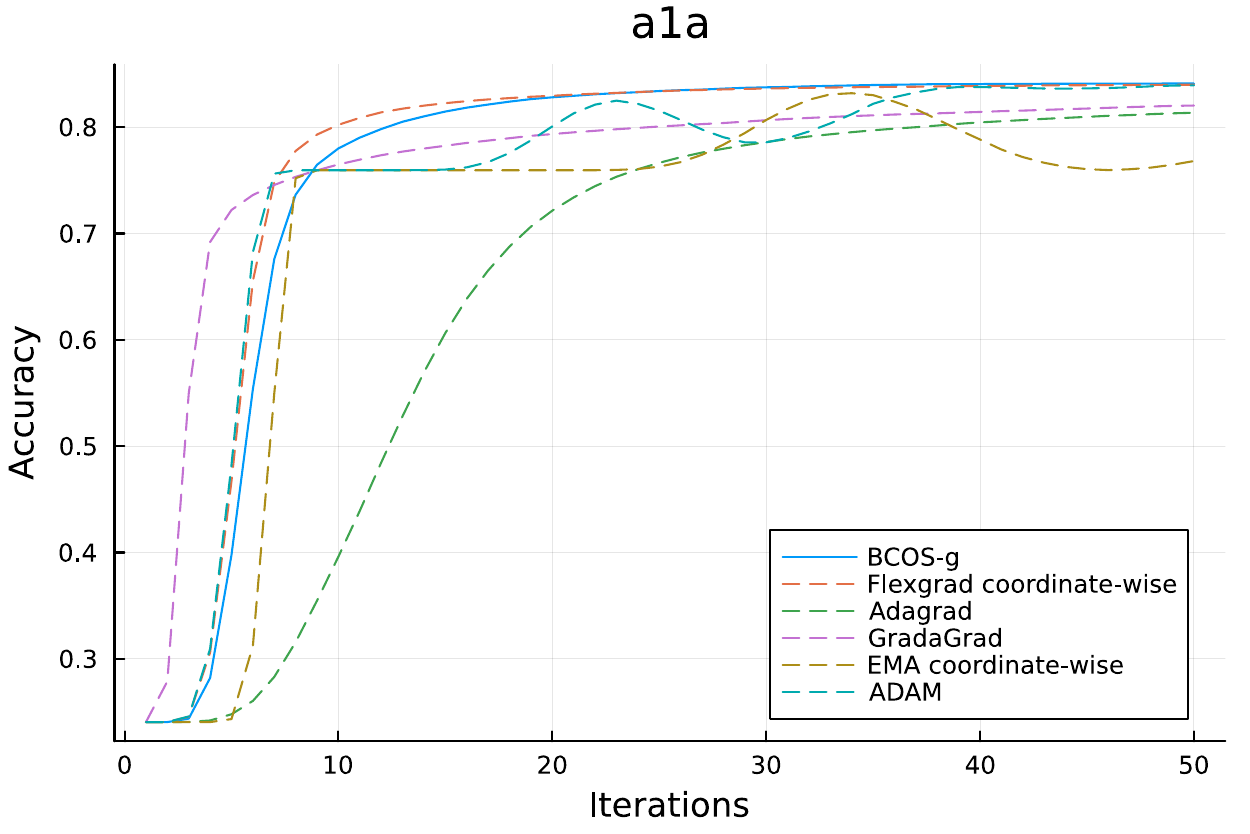}
        & \includegraphics[scale=\figscale]{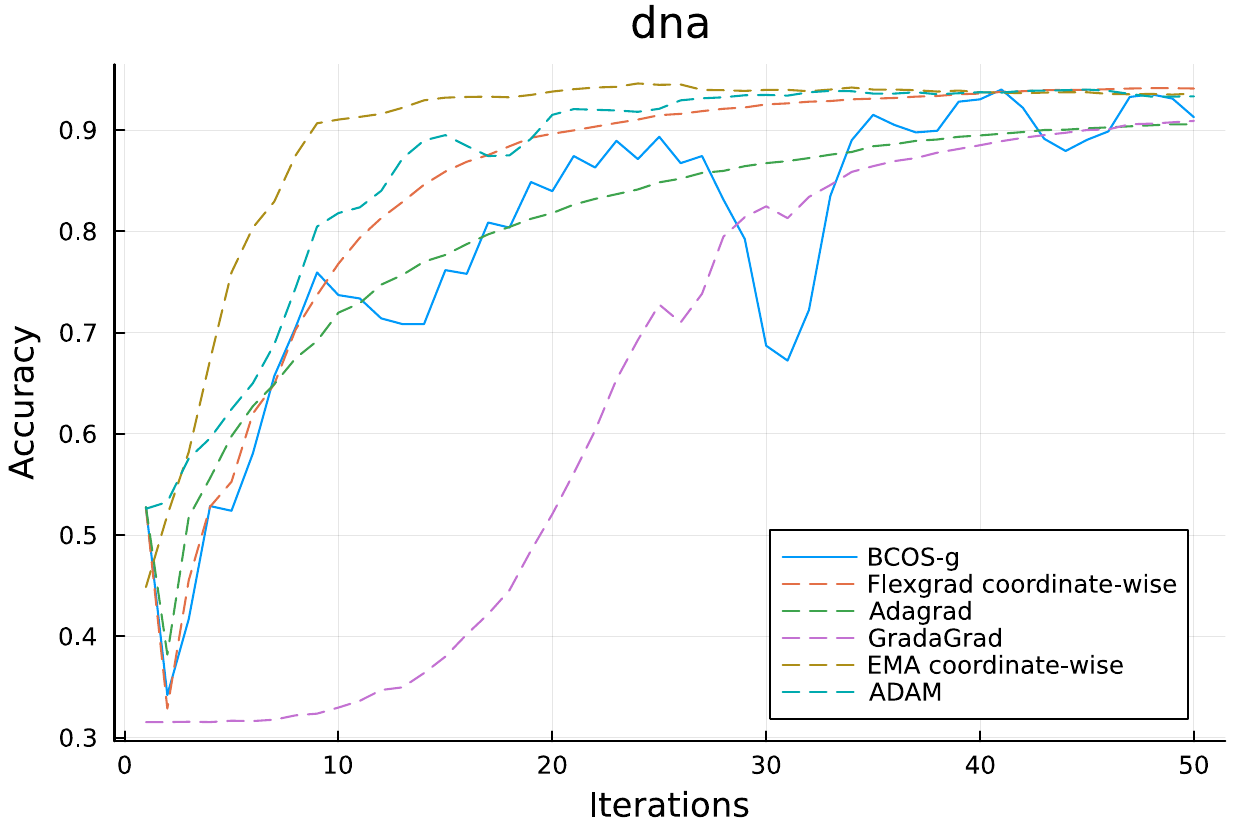}&
        \includegraphics[scale=\figscale]{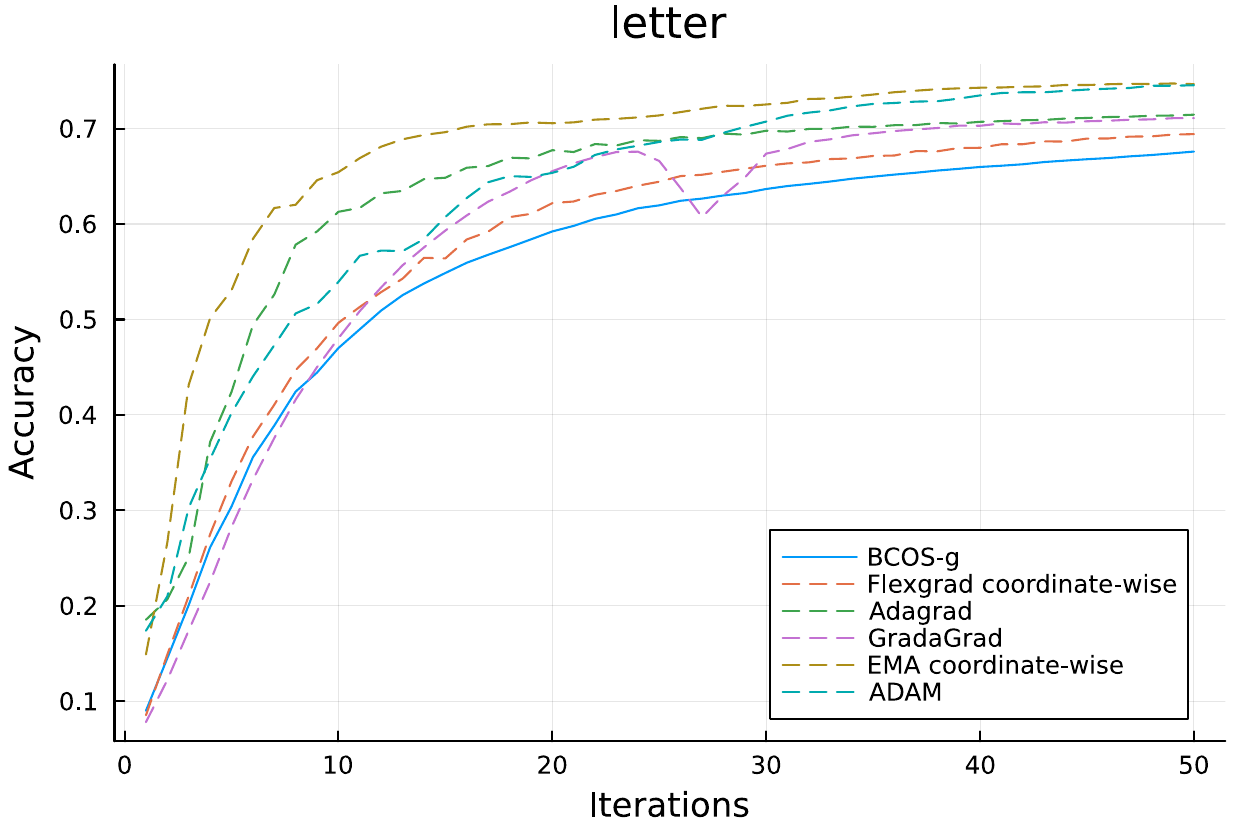}\\ 
        \includegraphics[scale=\figscale]{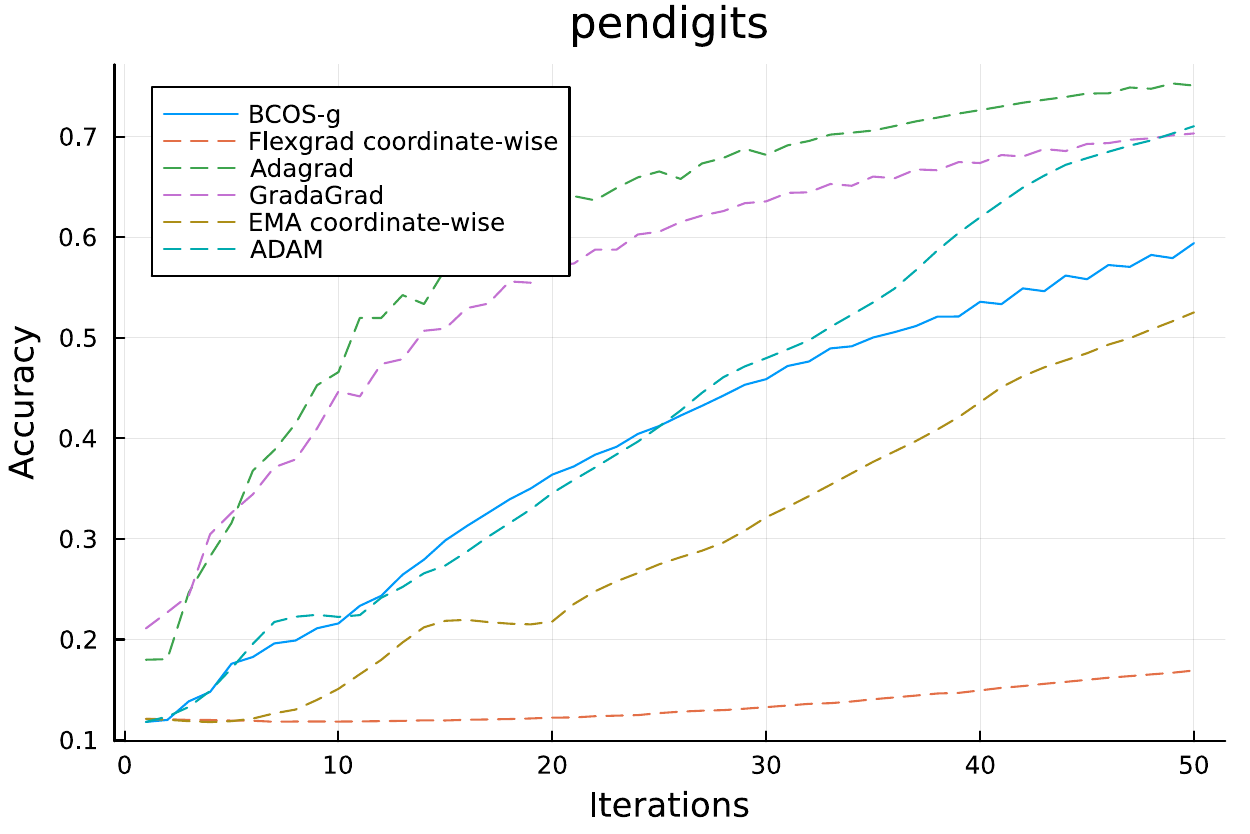}
        & \includegraphics[scale=\figscale]{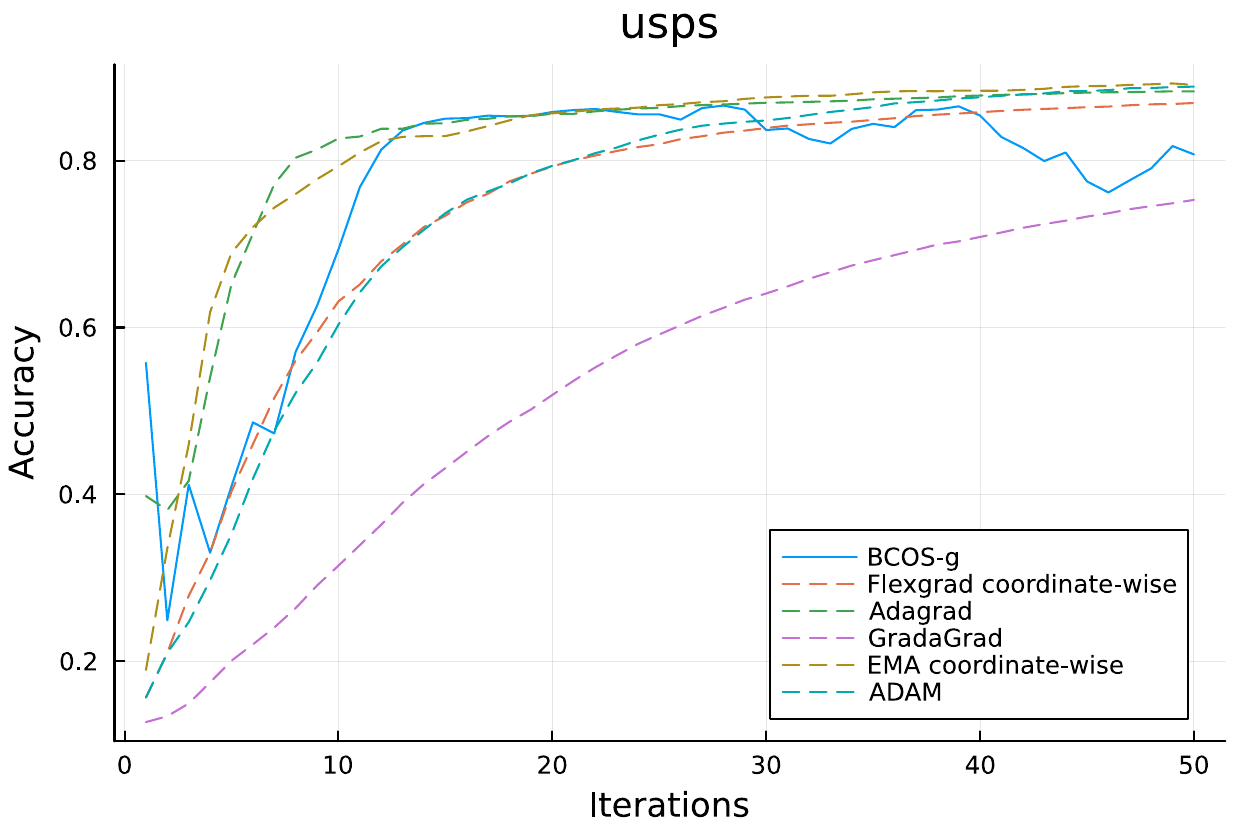}&
        \includegraphics[scale=\figscale]{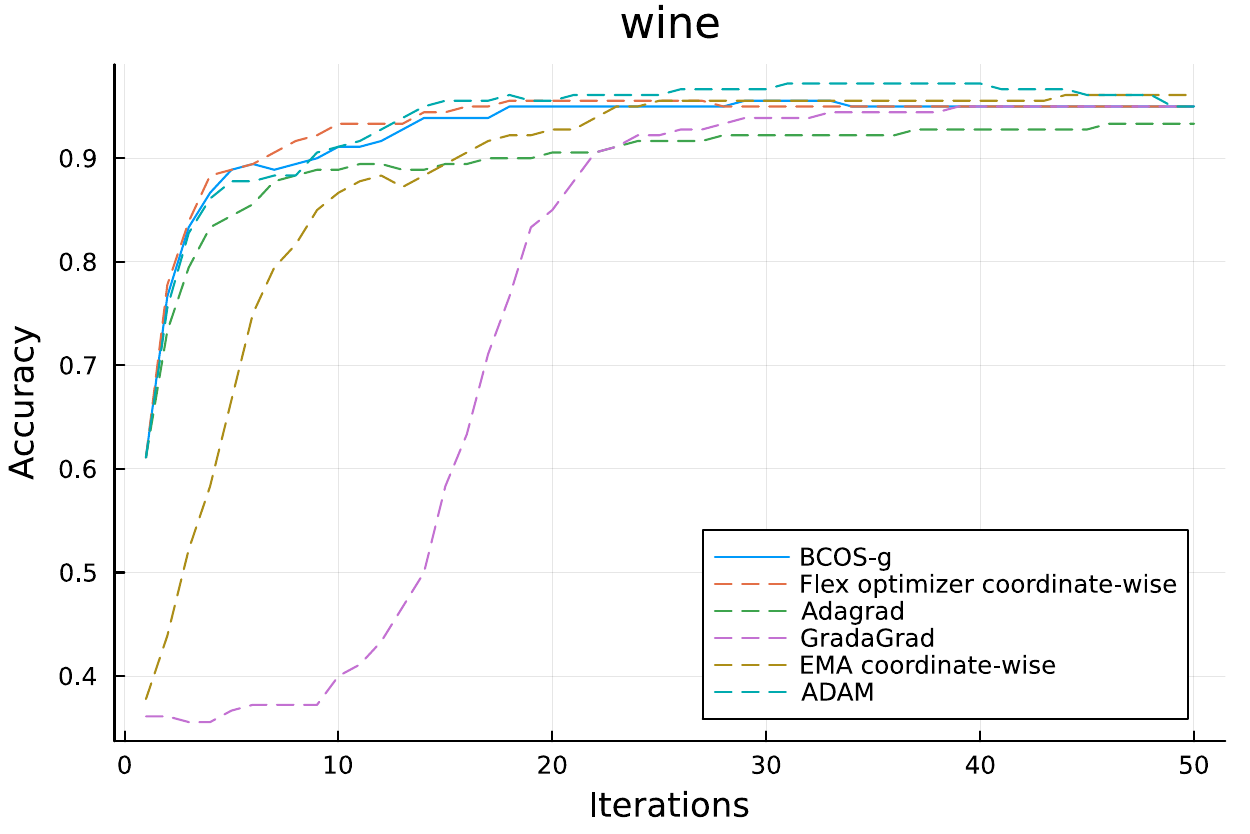}
     \end{tabular}
 \caption{Evolution of the accuracy along iterations on 6 data sets for BCOS, Flexgrad coordinate-wise, Adagrad, Gradagrad, EMA coordinate-wise, ADAM.}
\label{empprobmodel26}
\end{figure}

\begin{figure}[htbp]
    \centering
    \begin{tabular}{ccc}
  \includegraphics[scale=\figscale]{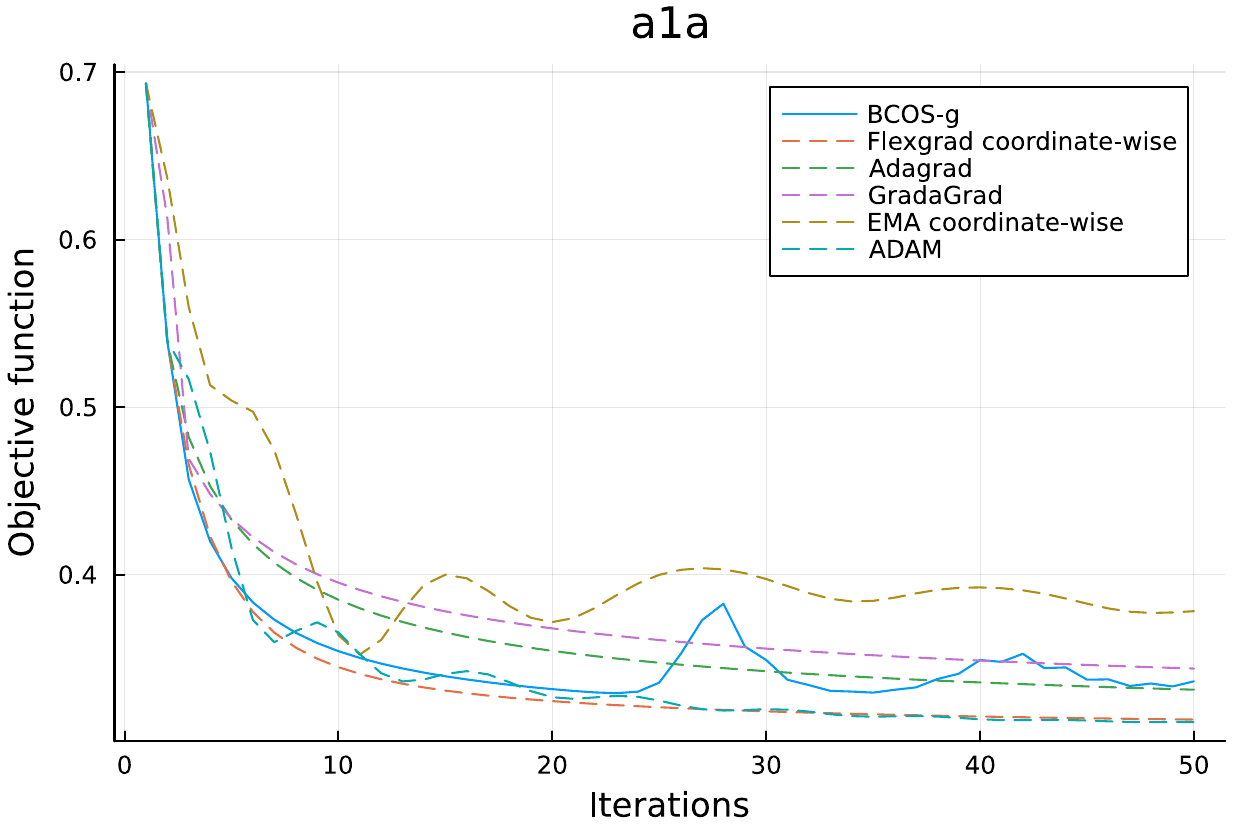}
        & \includegraphics[scale=\figscale]{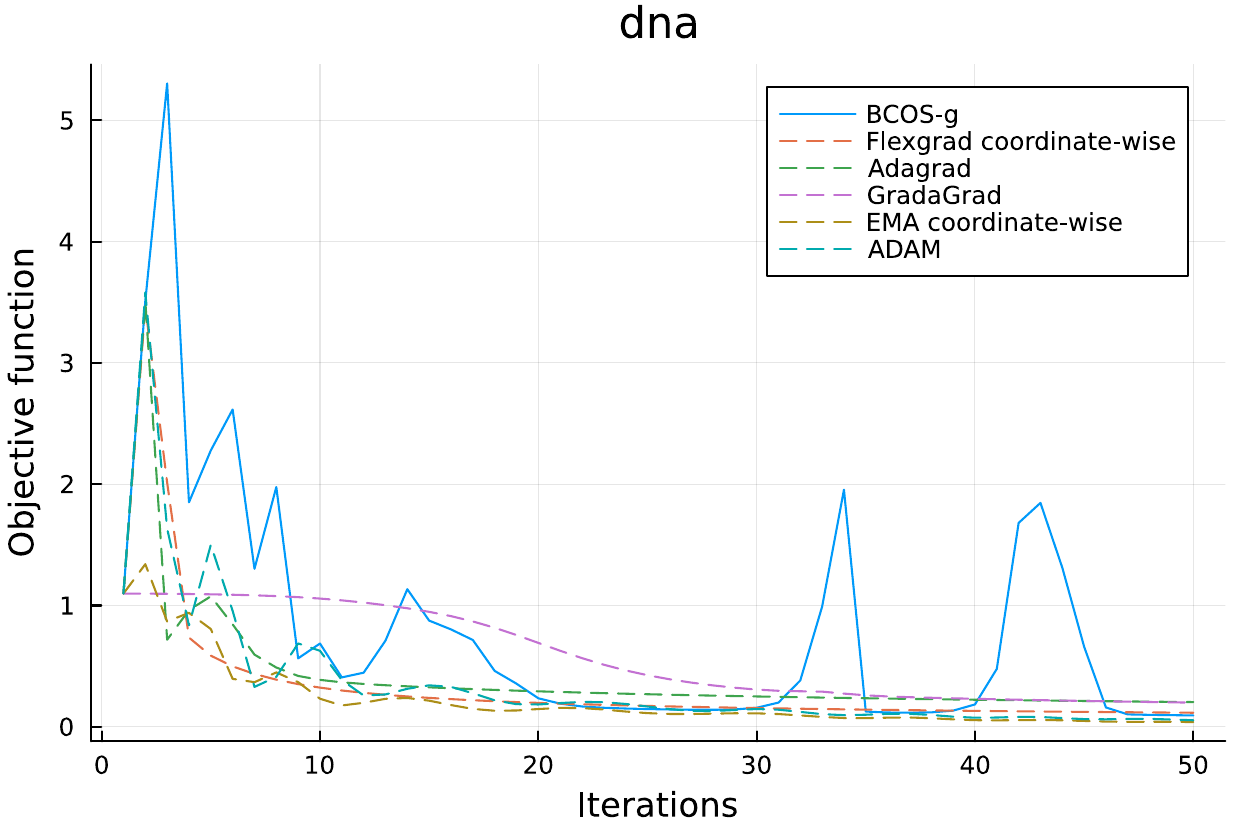}&
        \includegraphics[scale=\figscale]{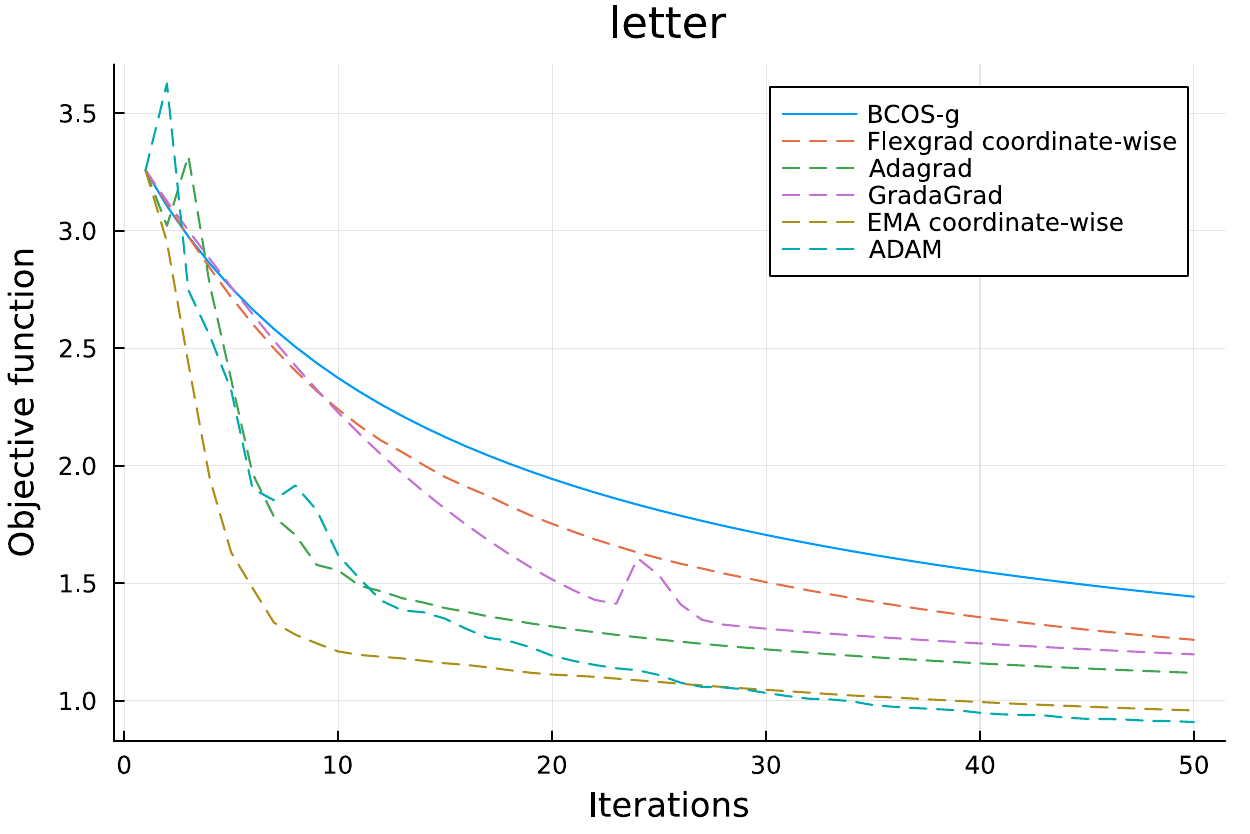}\\ 
        \includegraphics[scale=\figscale]{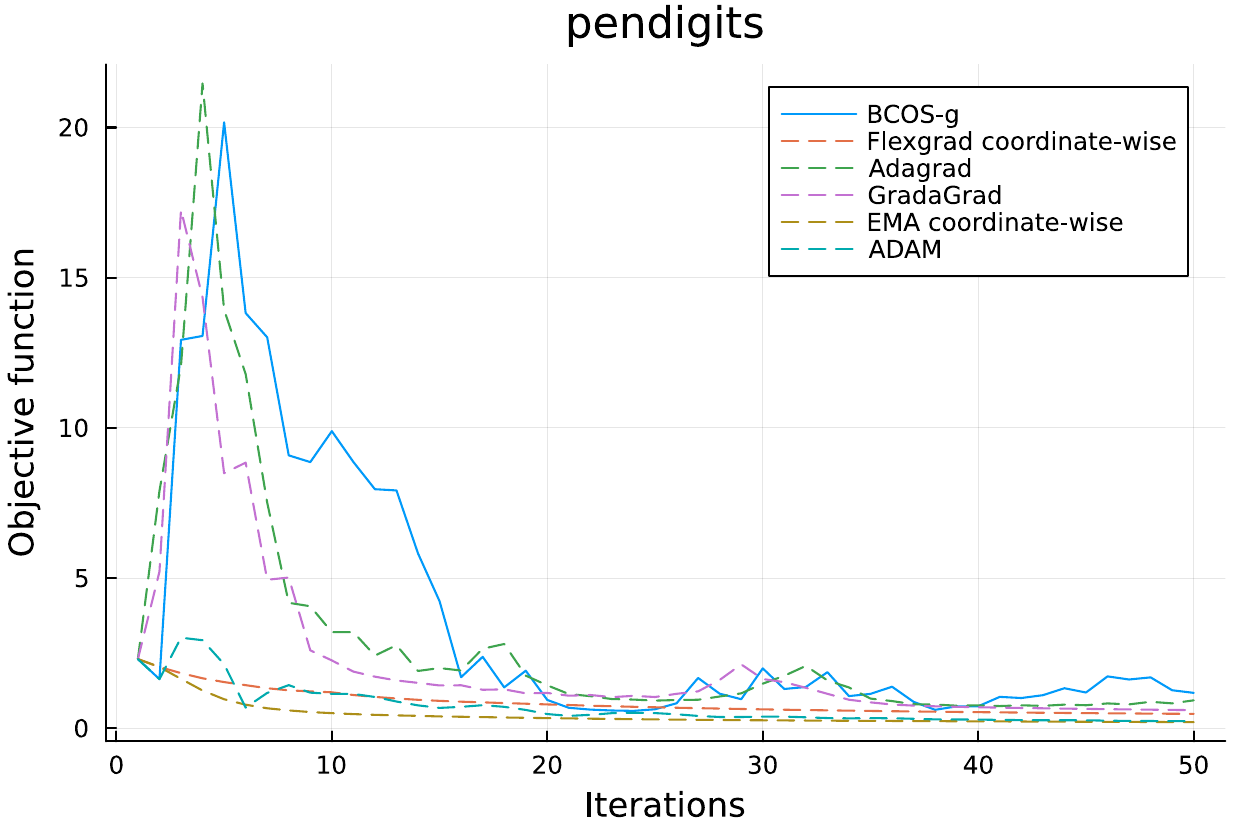}
        & \includegraphics[scale=\figscale]{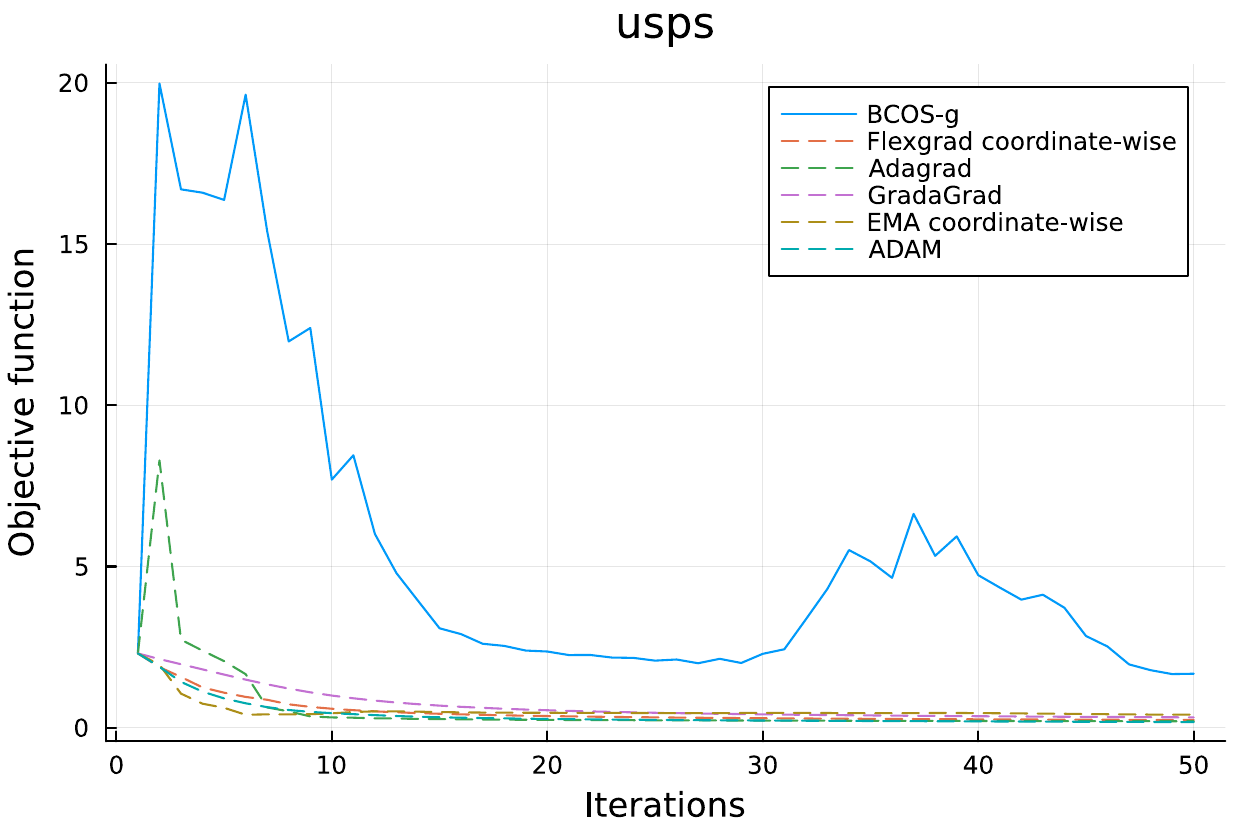}&
        \includegraphics[scale=\figscale]{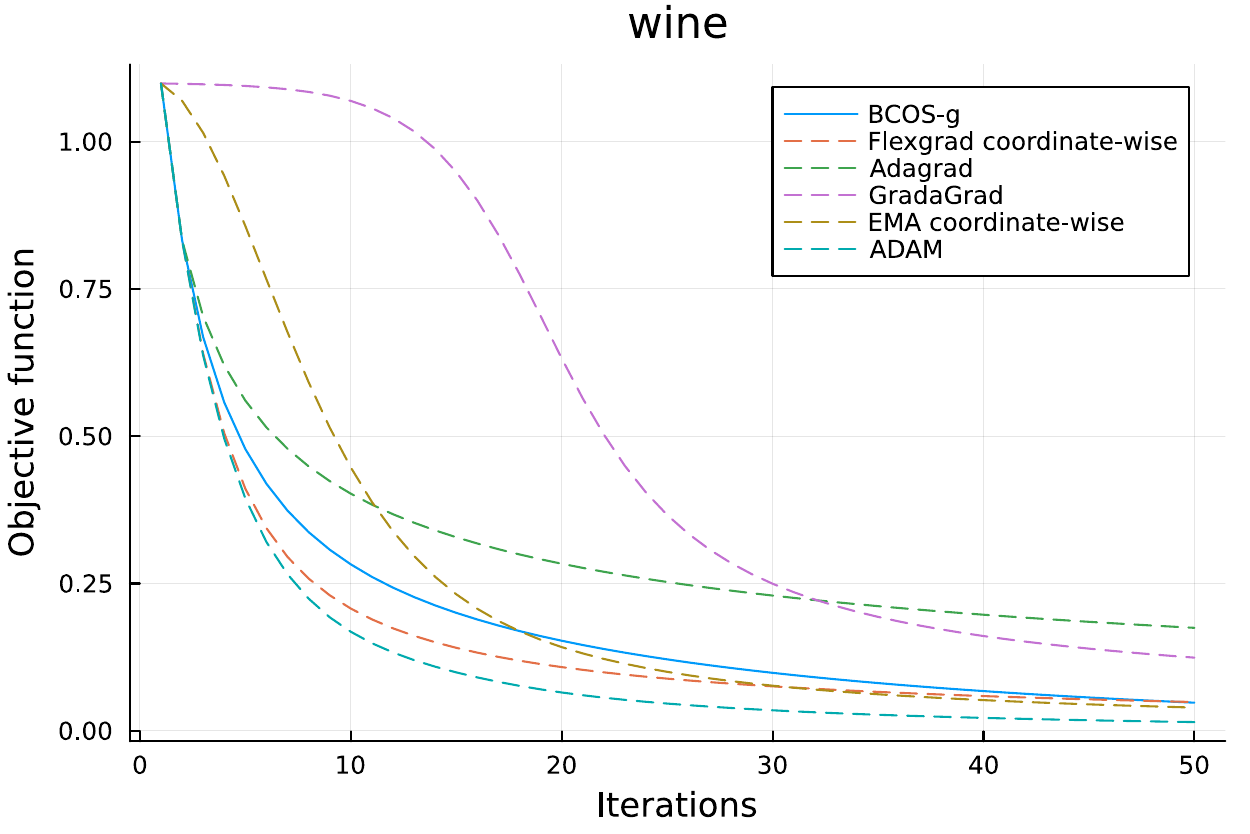}
     \end{tabular}
 \caption{Evolution of the objective function along iterations on 6 data sets for BCOS, Flexgrad coordinate-wise, Adagrad, Gradagrad, EMA coordinate-wise, ADAM.}
\label{empprobmodelo6}
\end{figure}

\begin{figure}[htbp]
    \centering
    \begin{tabular}{ccc}
  \includegraphics[scale=\figscale]{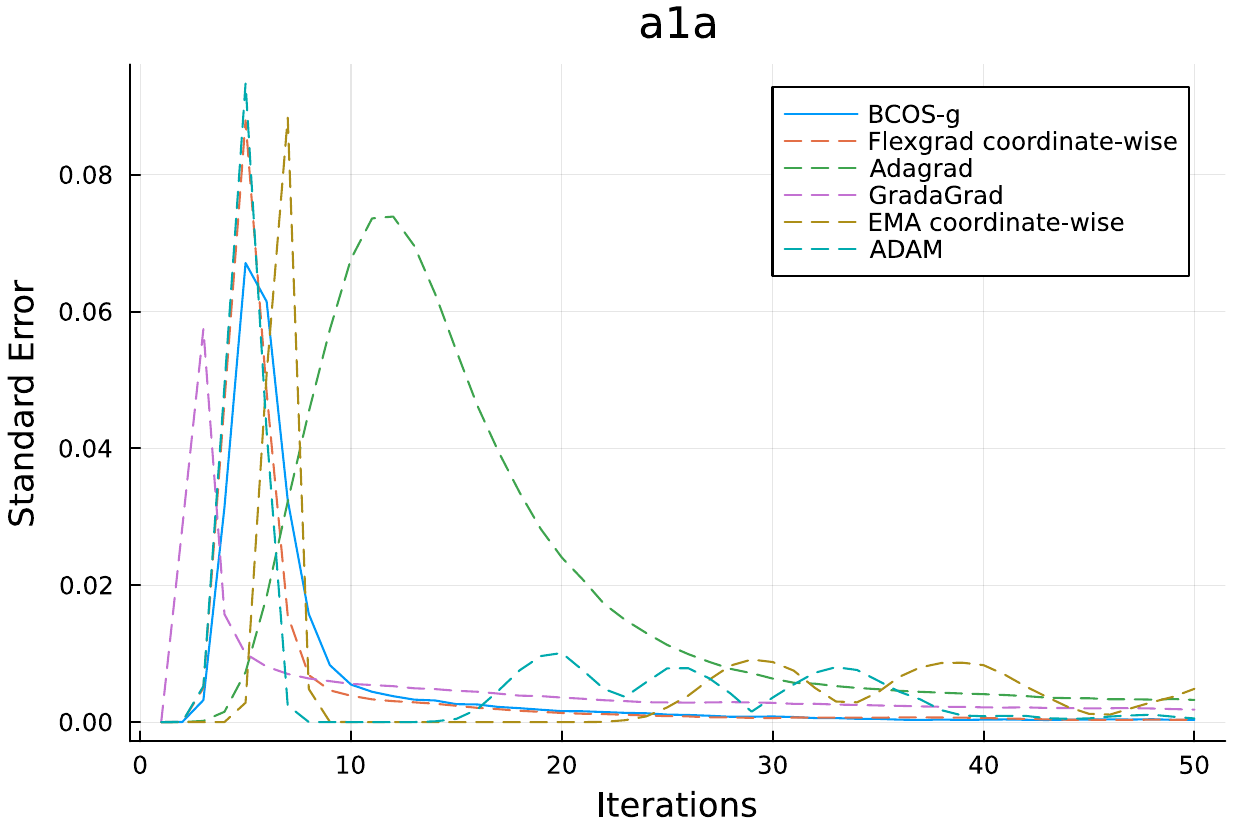}
        & \includegraphics[scale=\figscale]{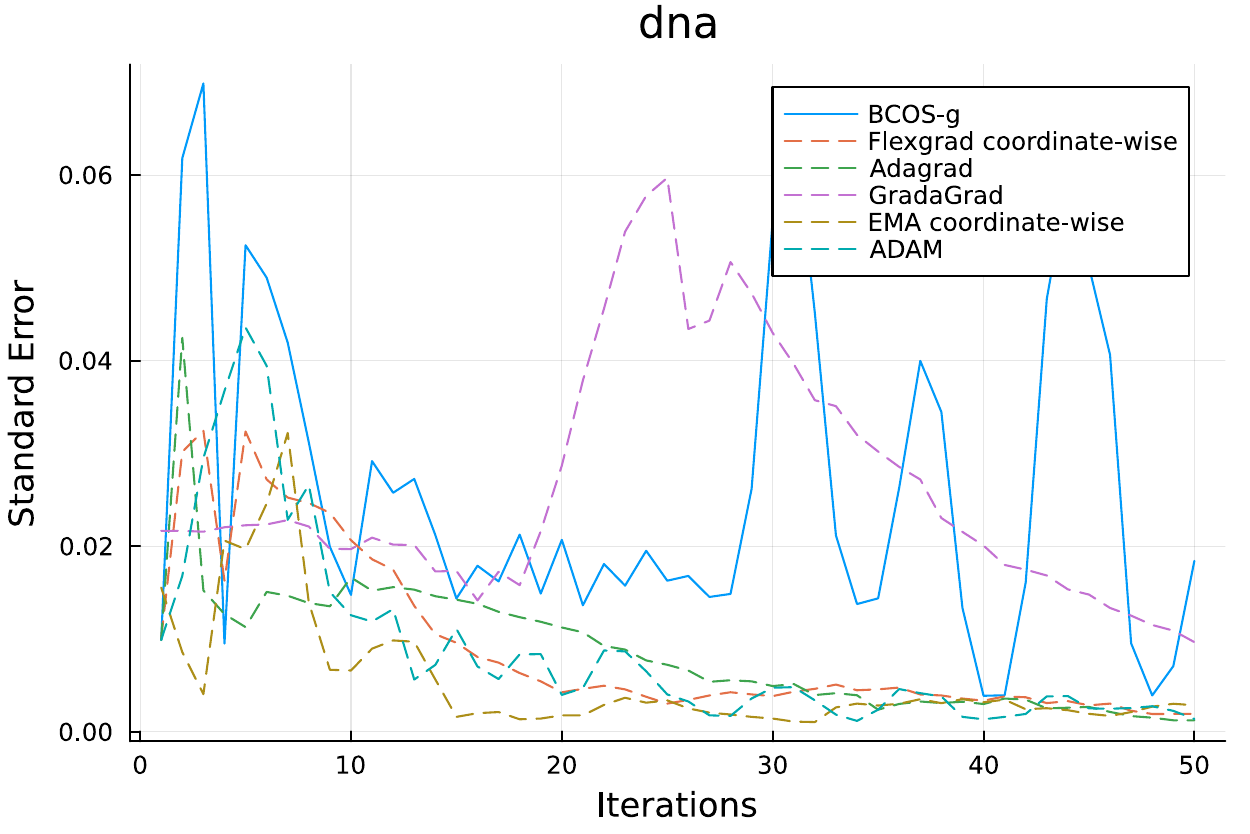}&
        \includegraphics[scale=\figscale]{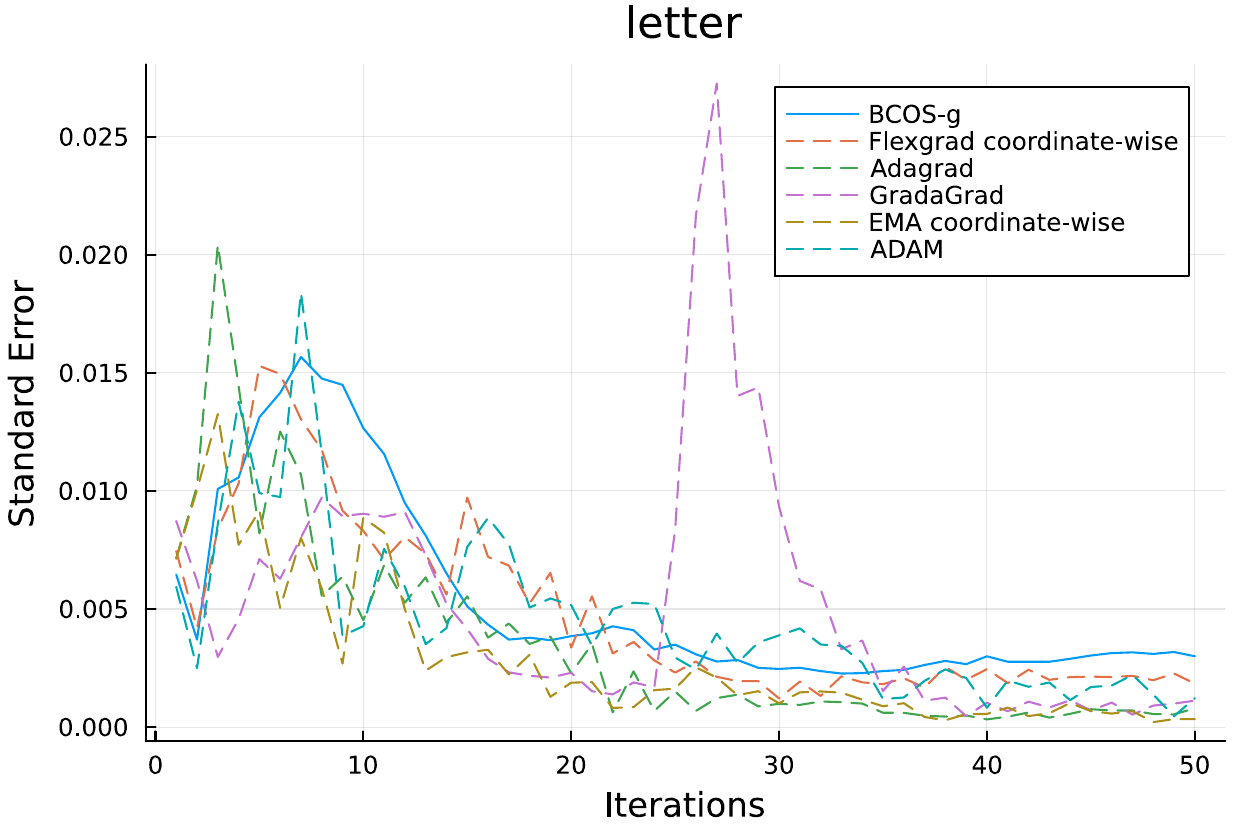}\\ 
        \includegraphics[scale=\figscale]{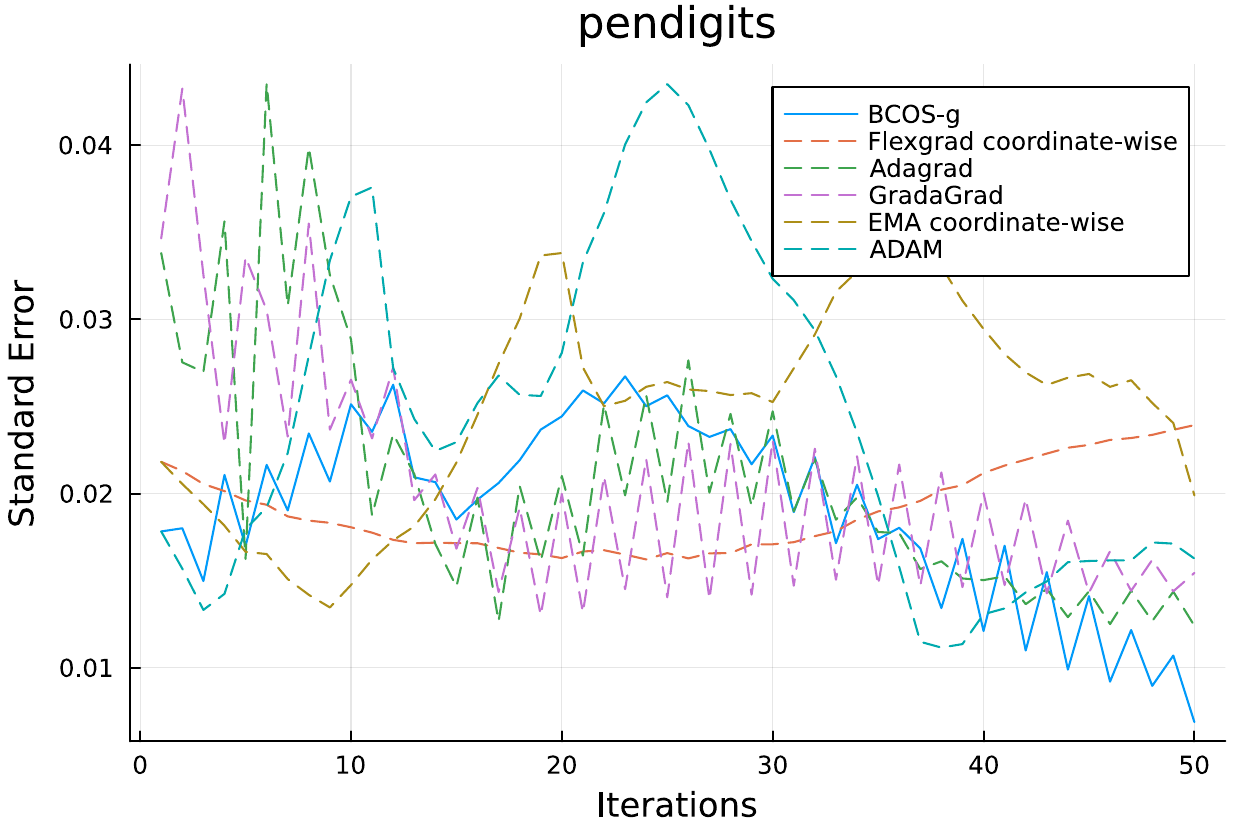}
        & \includegraphics[scale=\figscale]{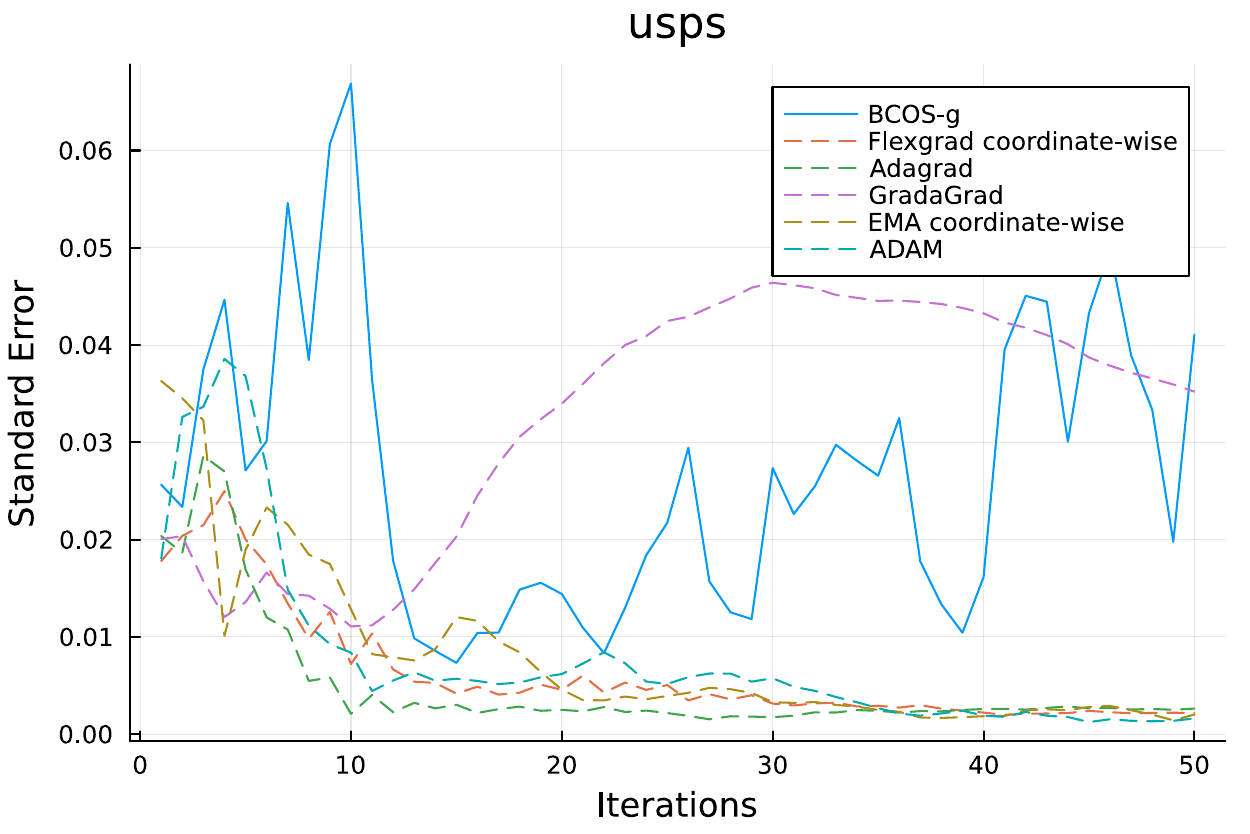}&
        \includegraphics[scale=\figscale]{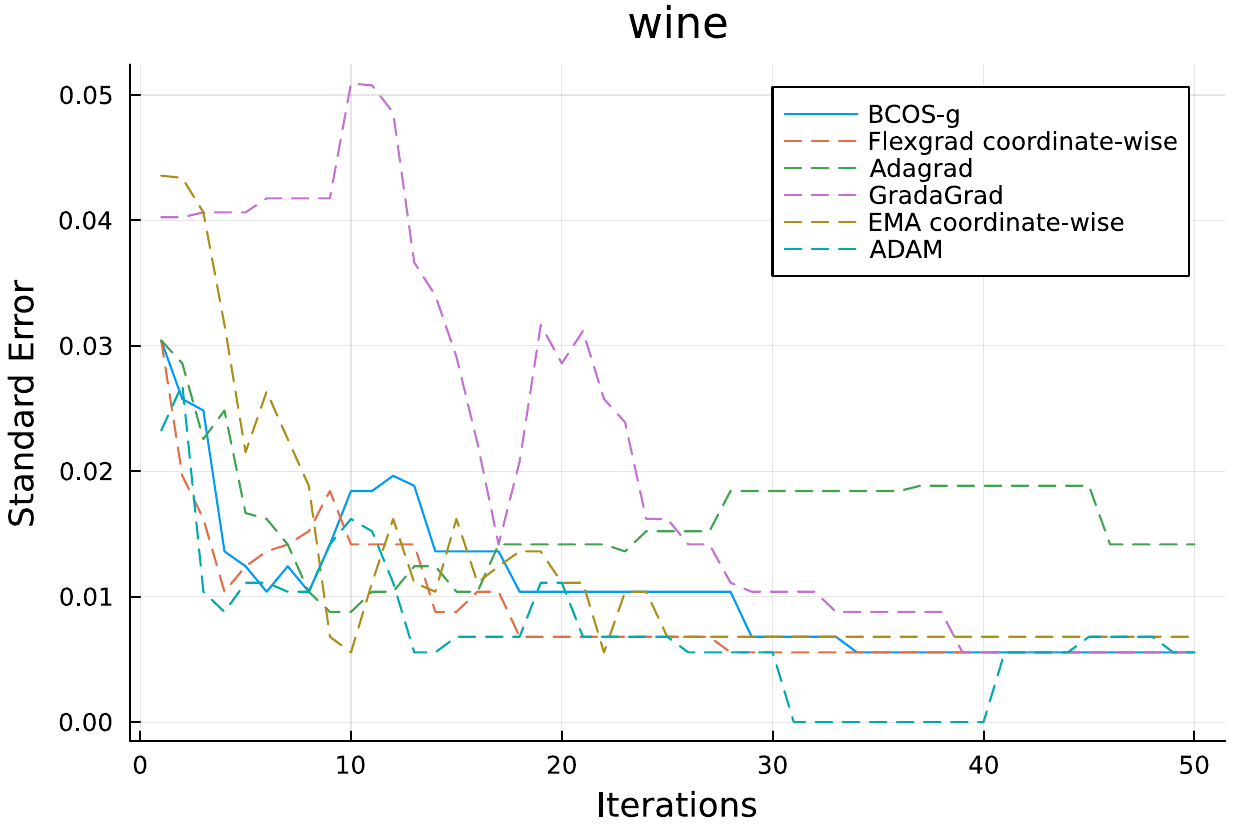}
     \end{tabular}
 \caption{Evolution of the standard deviation of the accuracy along iterations on 6 data sets for BCOS, Flexgrad coordinate-wise, Adagrad, Gradagrad, EMA coordinate-wise, ADAM.}
\label{empprobmodels6}
\end{figure}

\begin{figure}[htbp]
    \centering
    \begin{tabular}{ccc}
        \includegraphics[scale=\figscale]{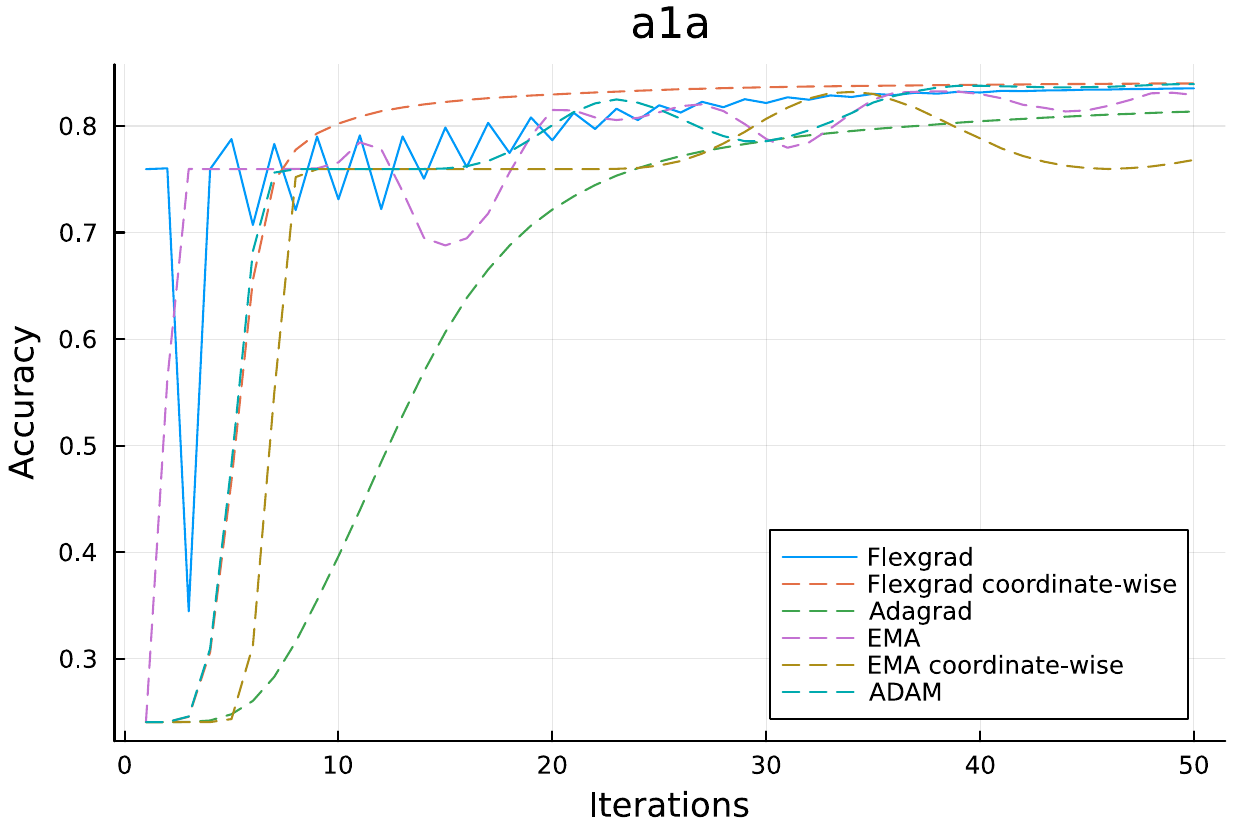}
        & \includegraphics[scale=\figscale]{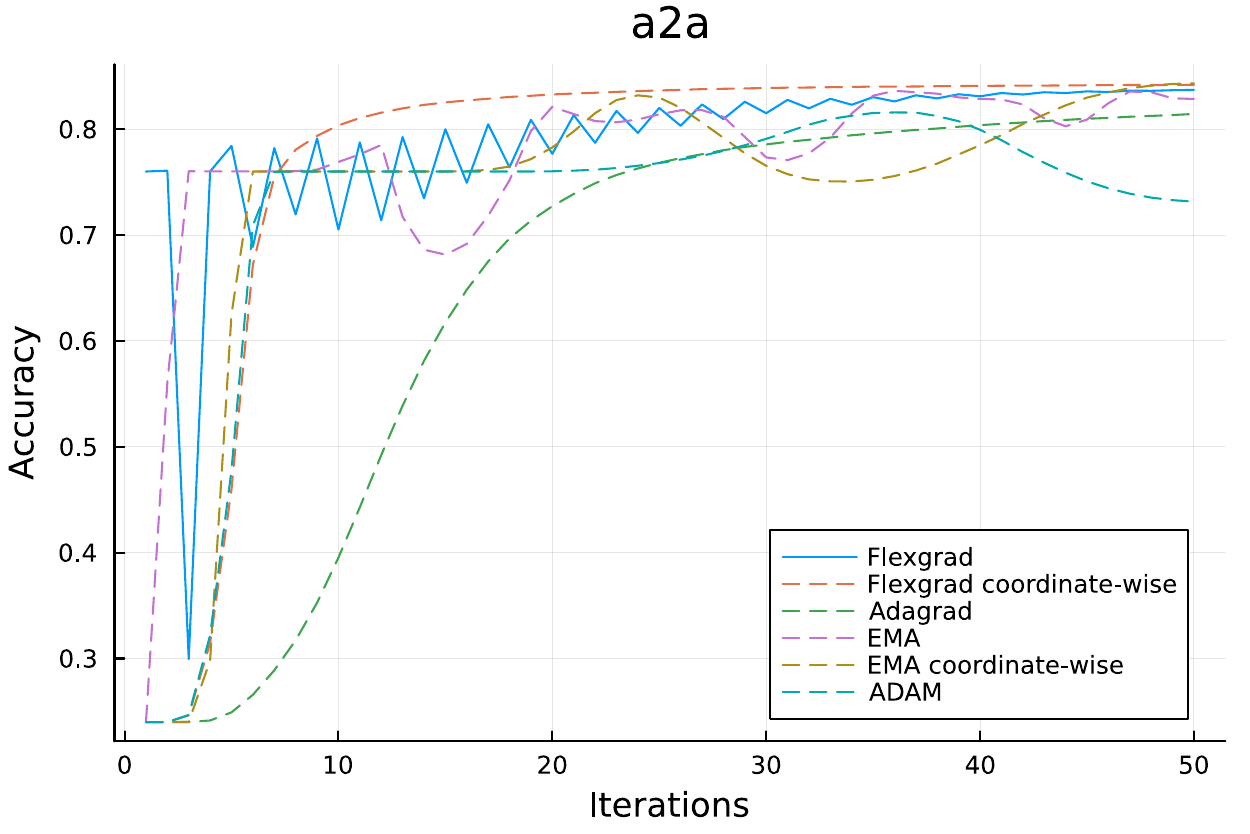}&
        \includegraphics[scale=\figscale]{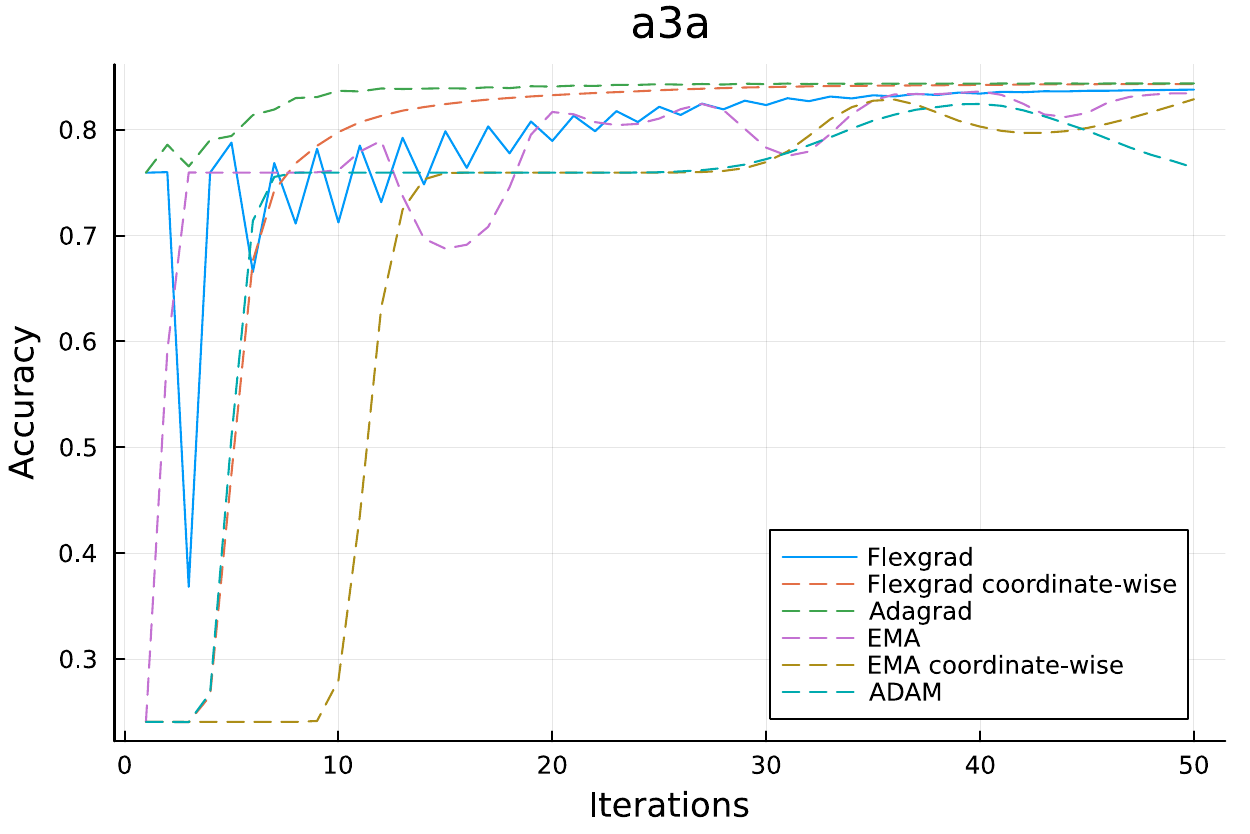}\\ 
        \includegraphics[scale=\figscale]{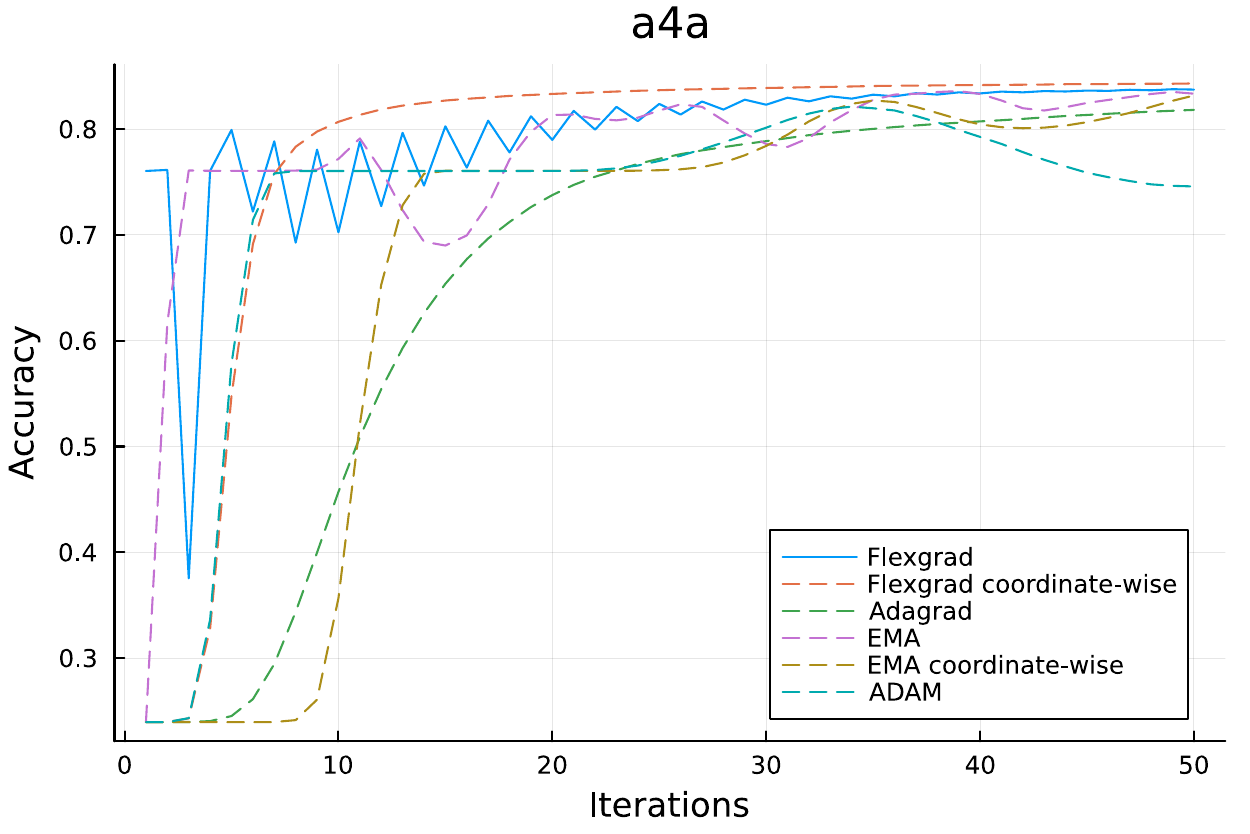}
        & \includegraphics[scale=\figscale]{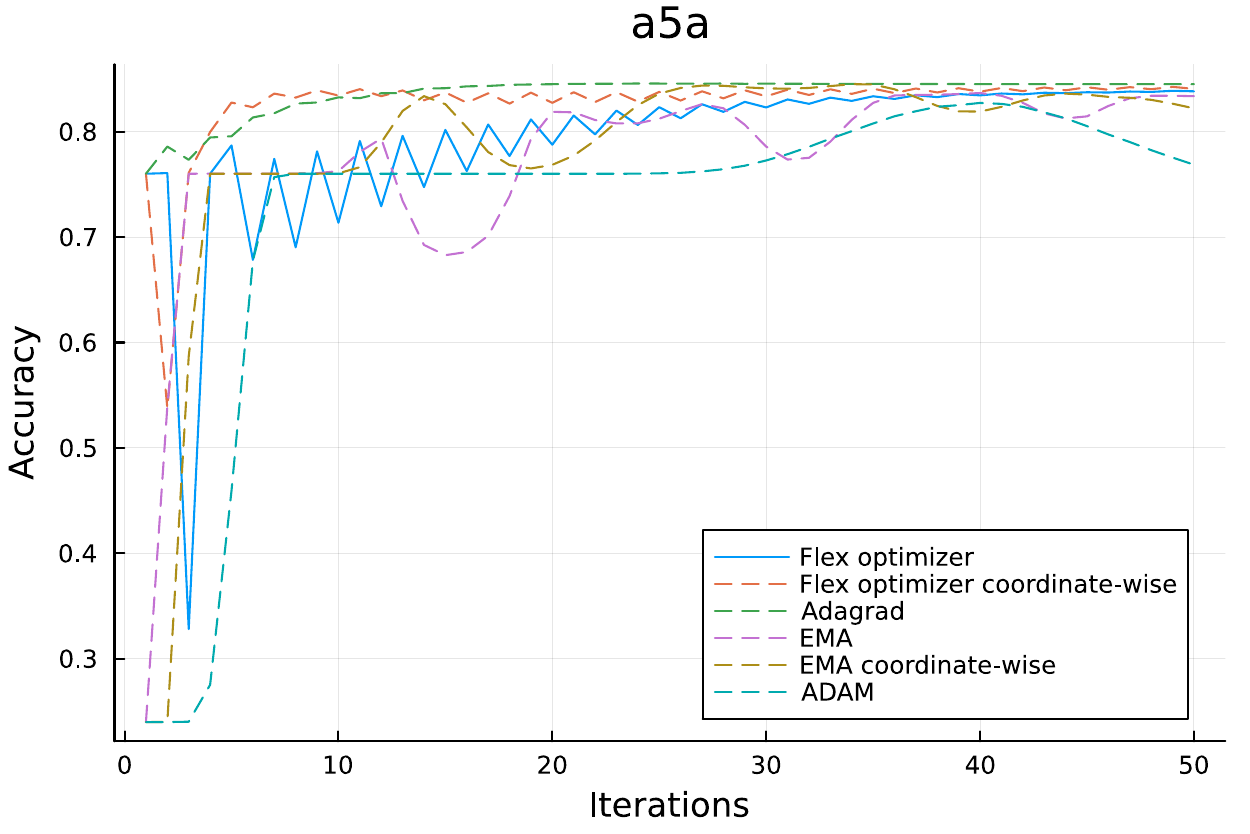}&
        \includegraphics[scale=\figscale]{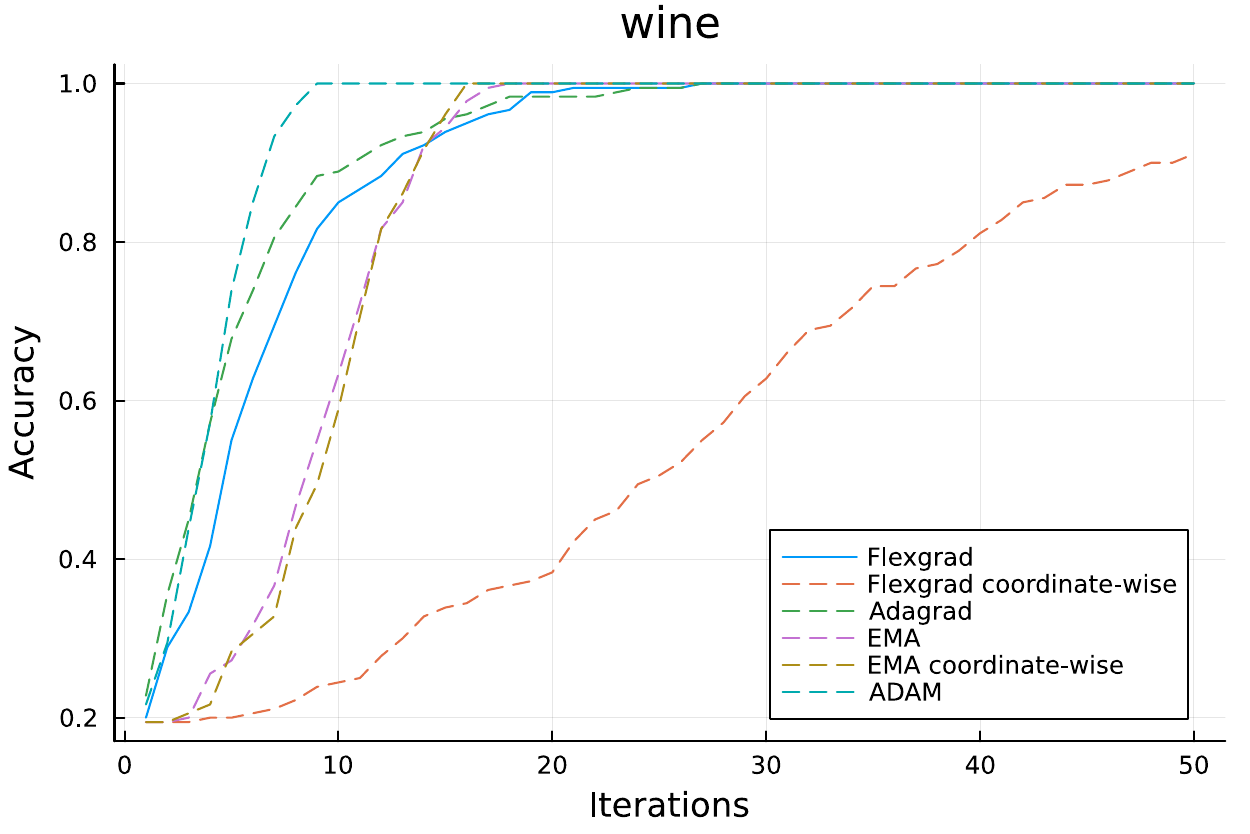}\\ 
        \includegraphics[scale=\figscale]{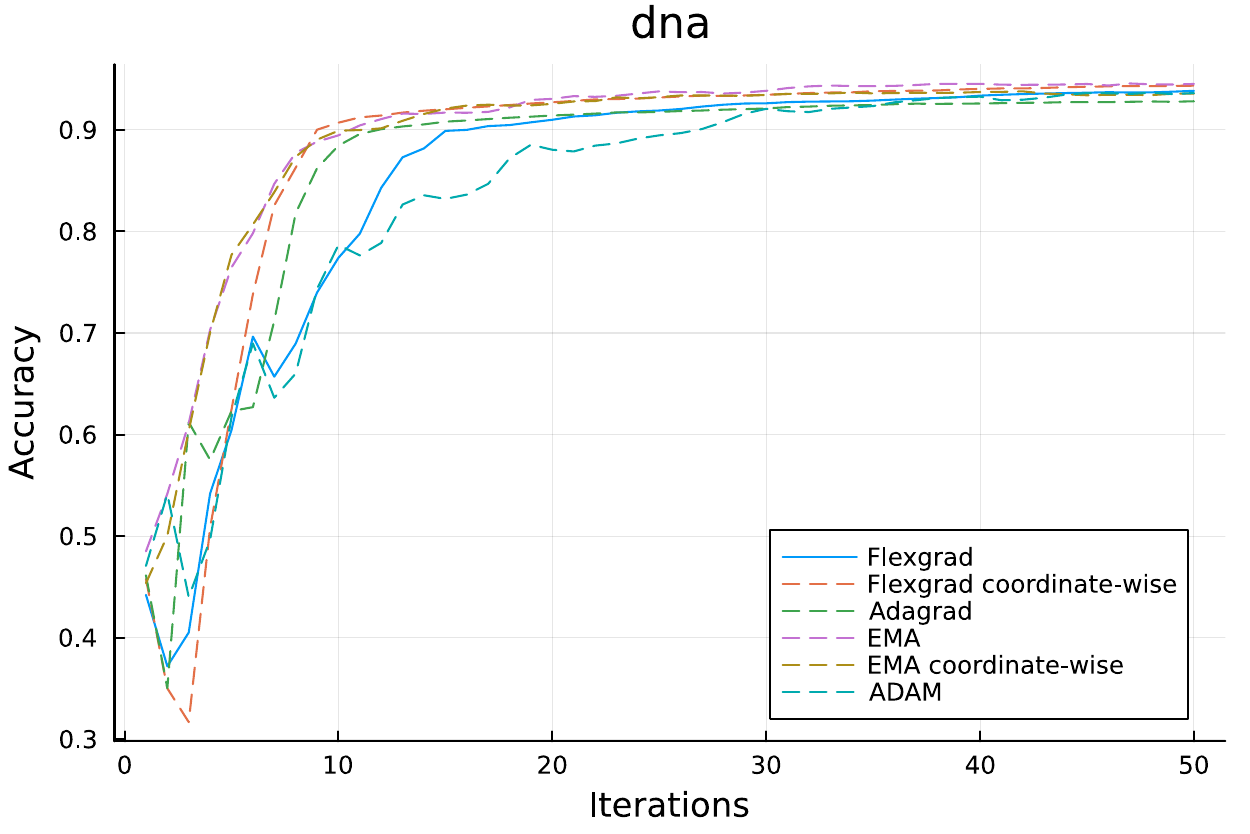}
        & \includegraphics[scale=\figscale]{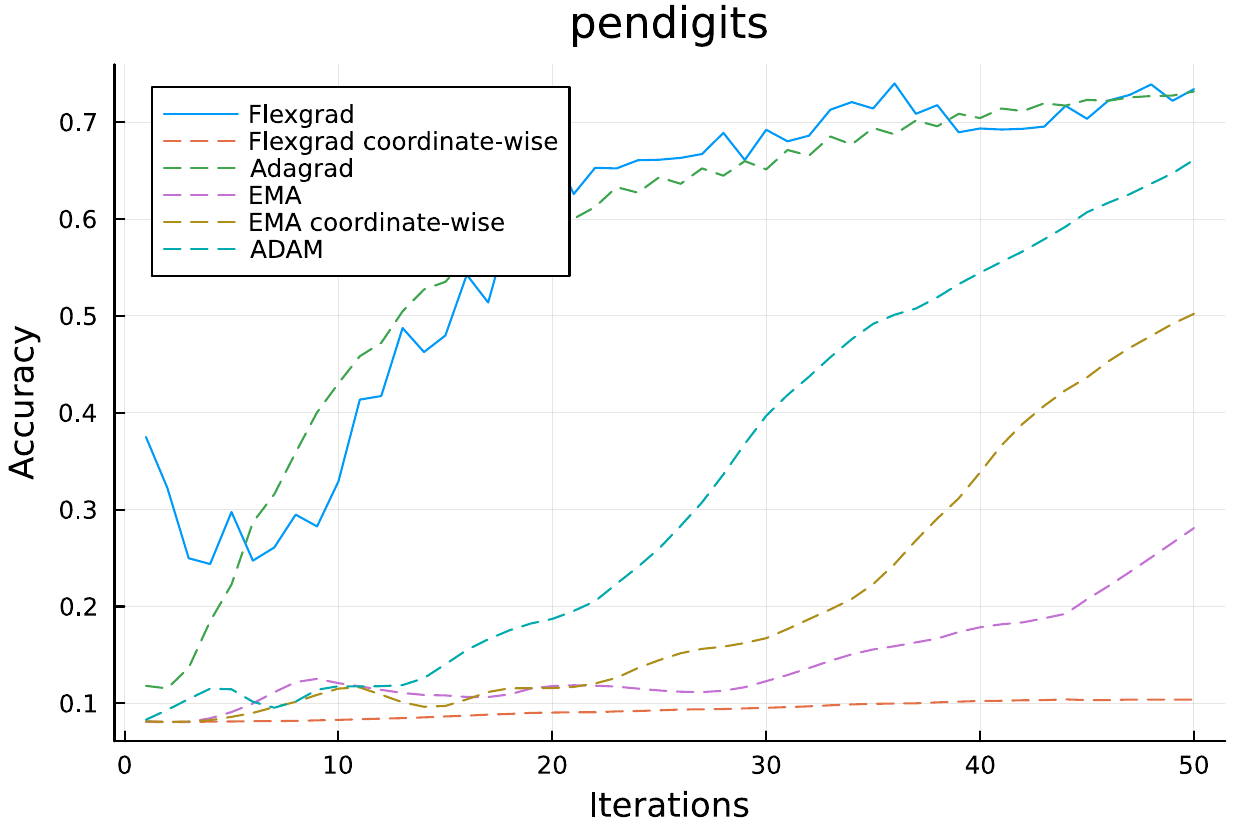}&
        \includegraphics[scale=\figscale]{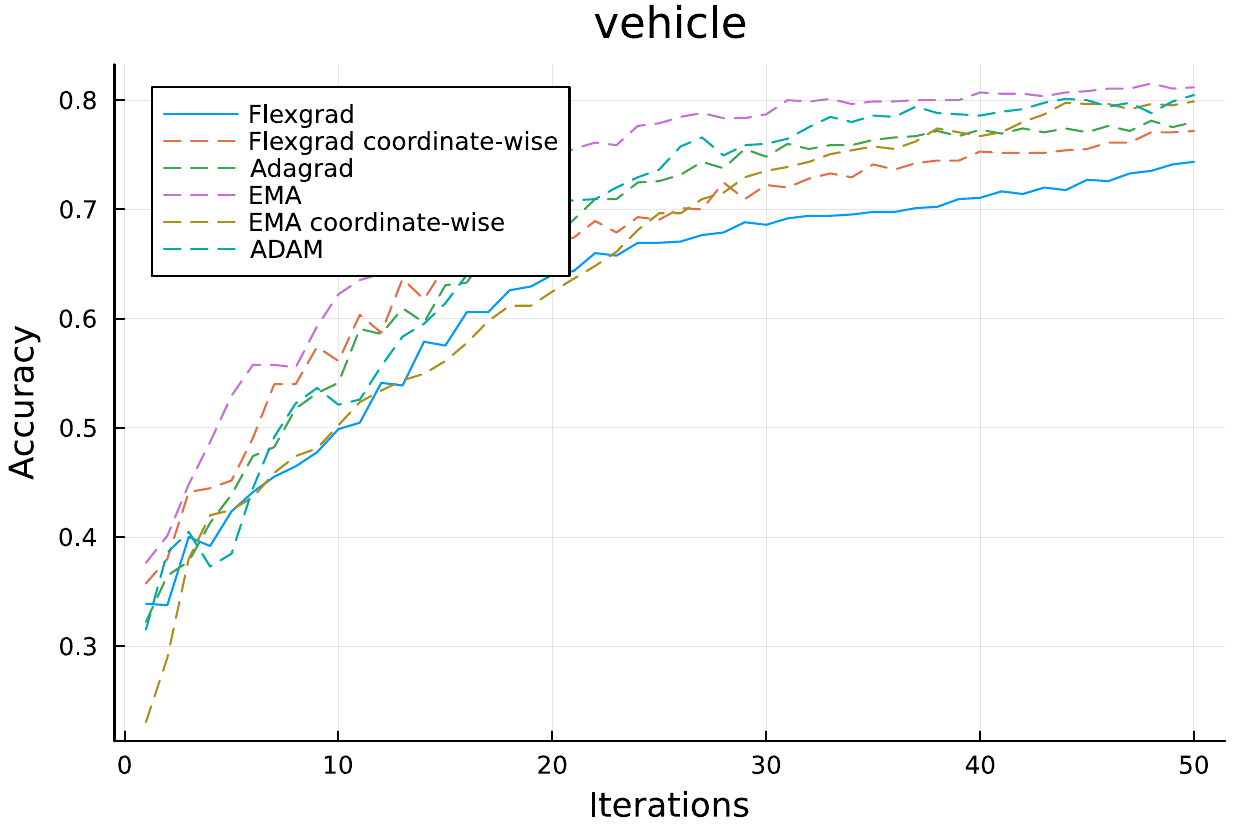}\\
        \includegraphics[scale=\figscale]{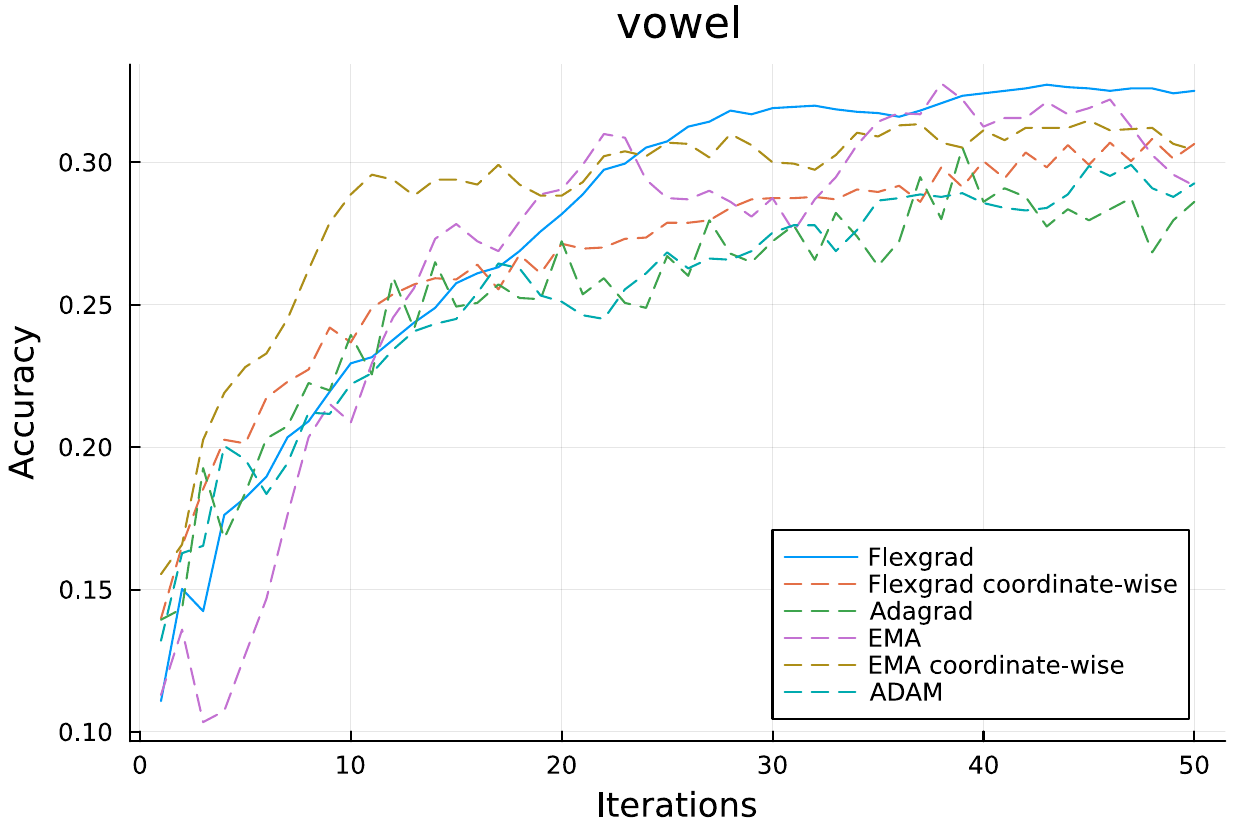}
        & \includegraphics[scale=\figscale]{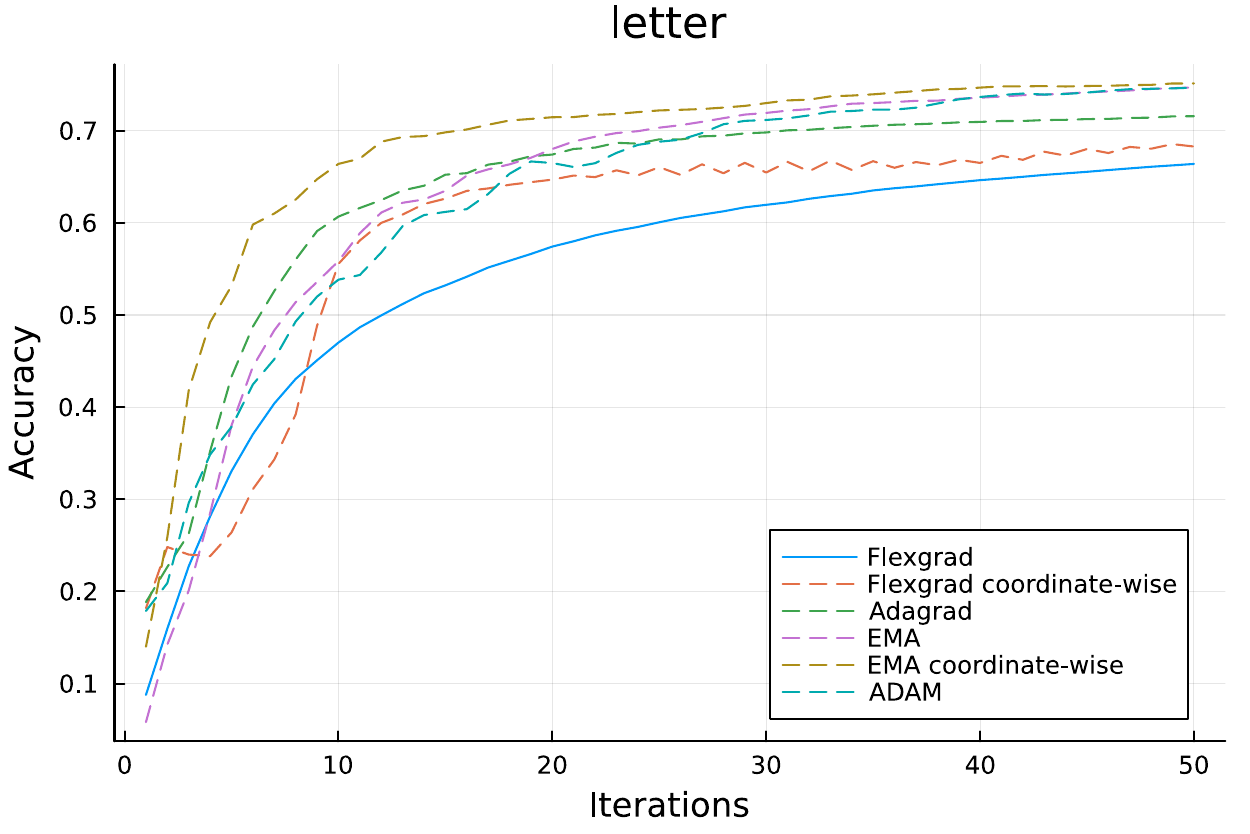}&
        \includegraphics[scale=\figscale]{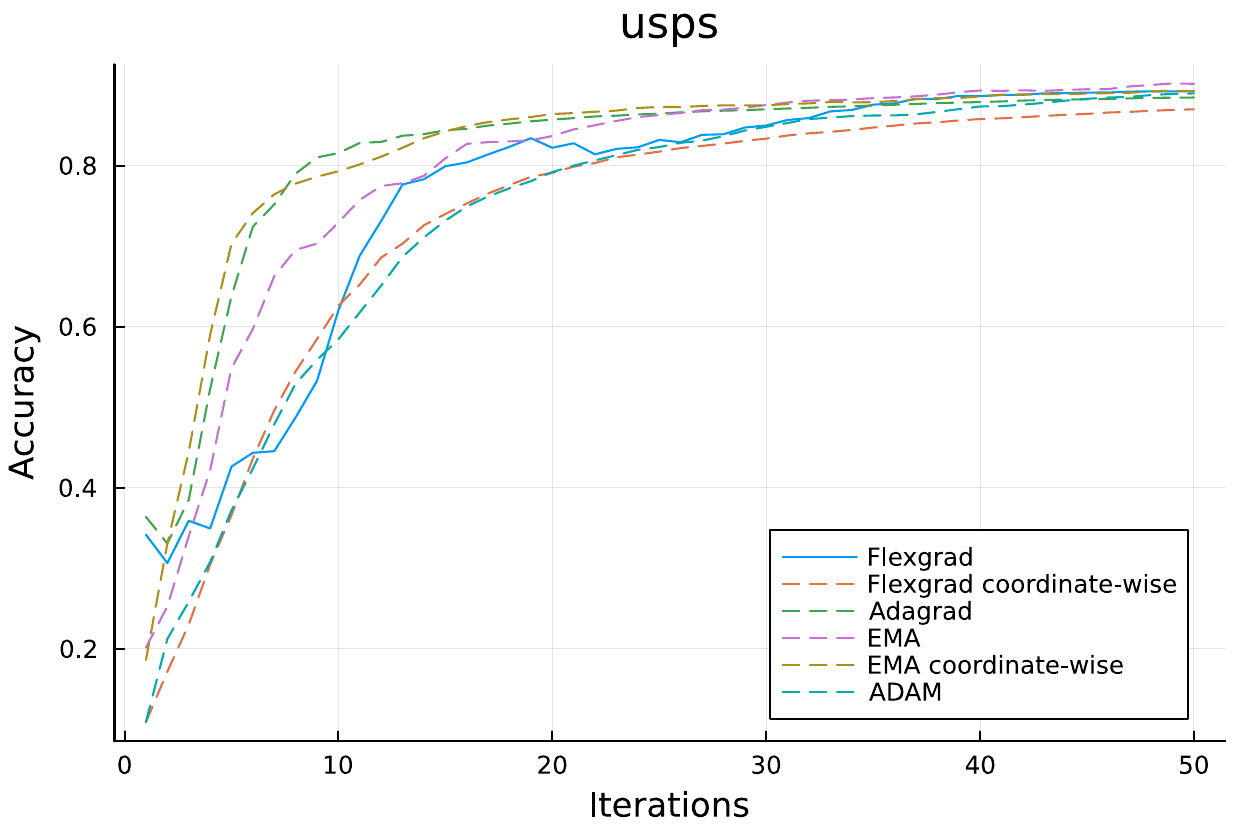}
     \end{tabular}
 \caption{Evolution of the accuracy along iterations on 12 data sets for Flexgrad, Flexgrad coordinate-wise, Adagrad, EMA, EMA coordinate-wise, ADAM.}
\label{empprobmodel2}
\end{figure}

\begin{figure}[htbp]
    \centering
    \begin{tabular}{ccc}
  \includegraphics[scale=\figscale]{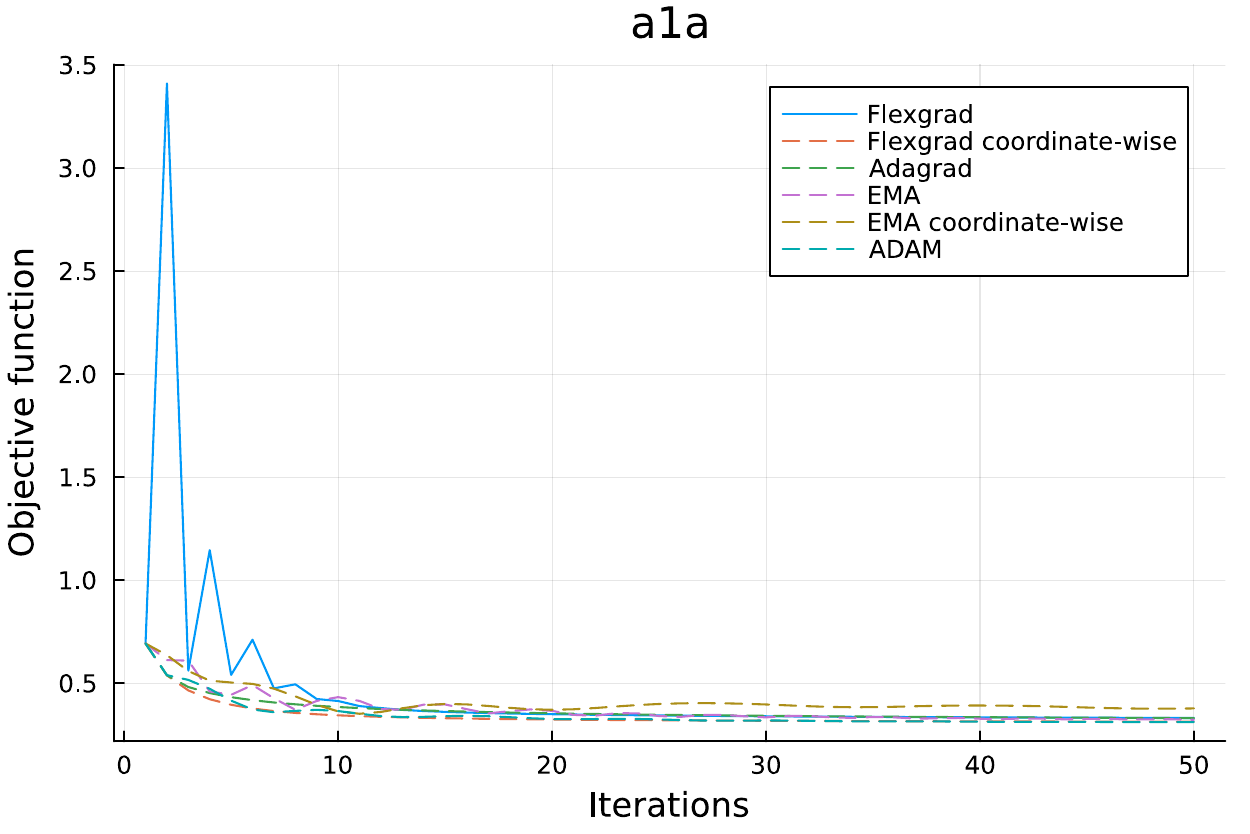}
        & \includegraphics[scale=\figscale]{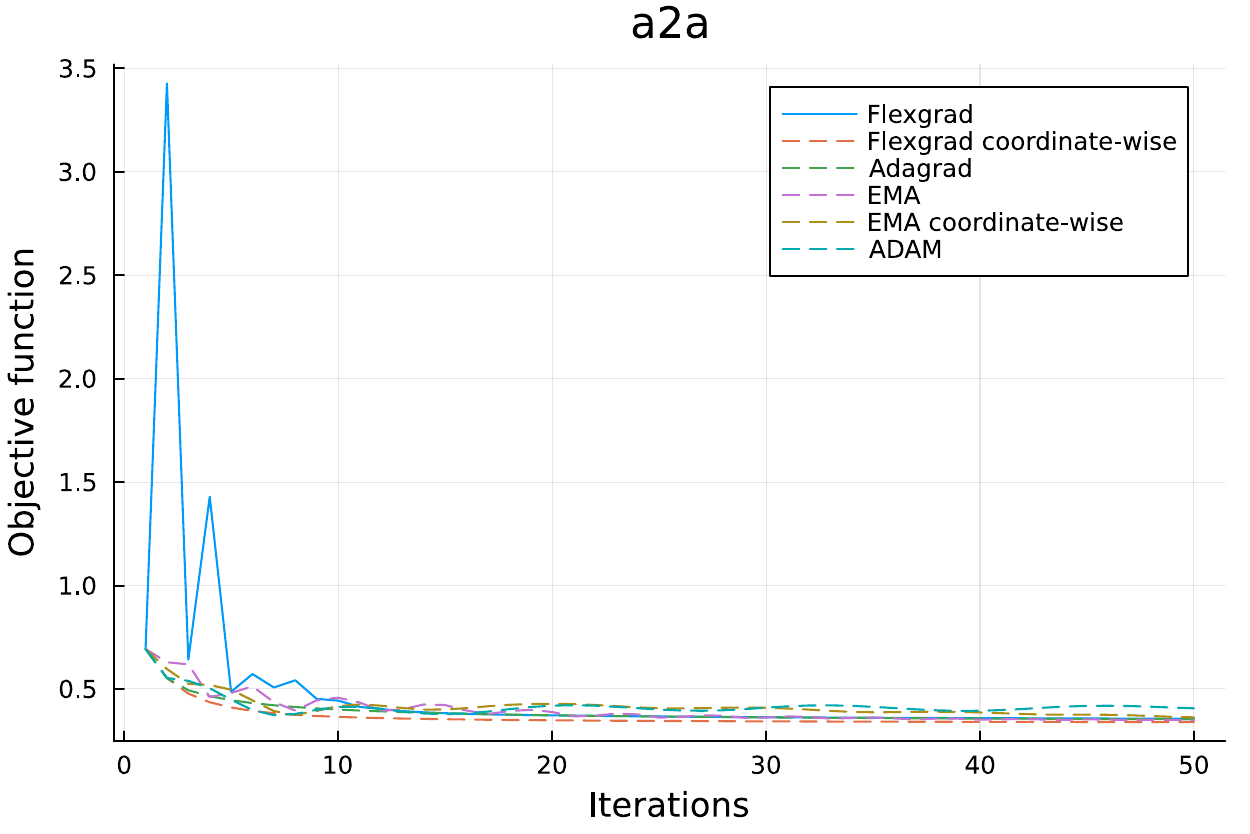}&
        \includegraphics[scale=\figscale]{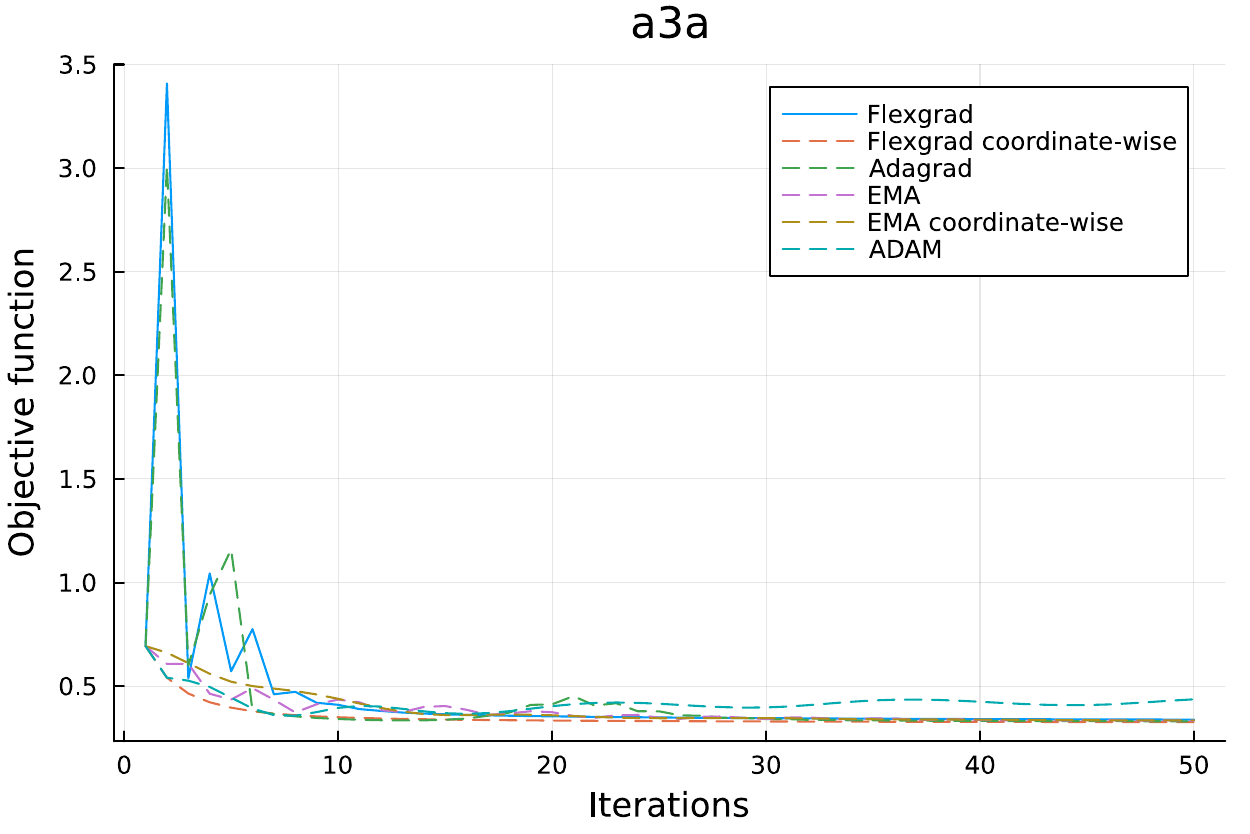}\\ 
        \includegraphics[scale=\figscale]{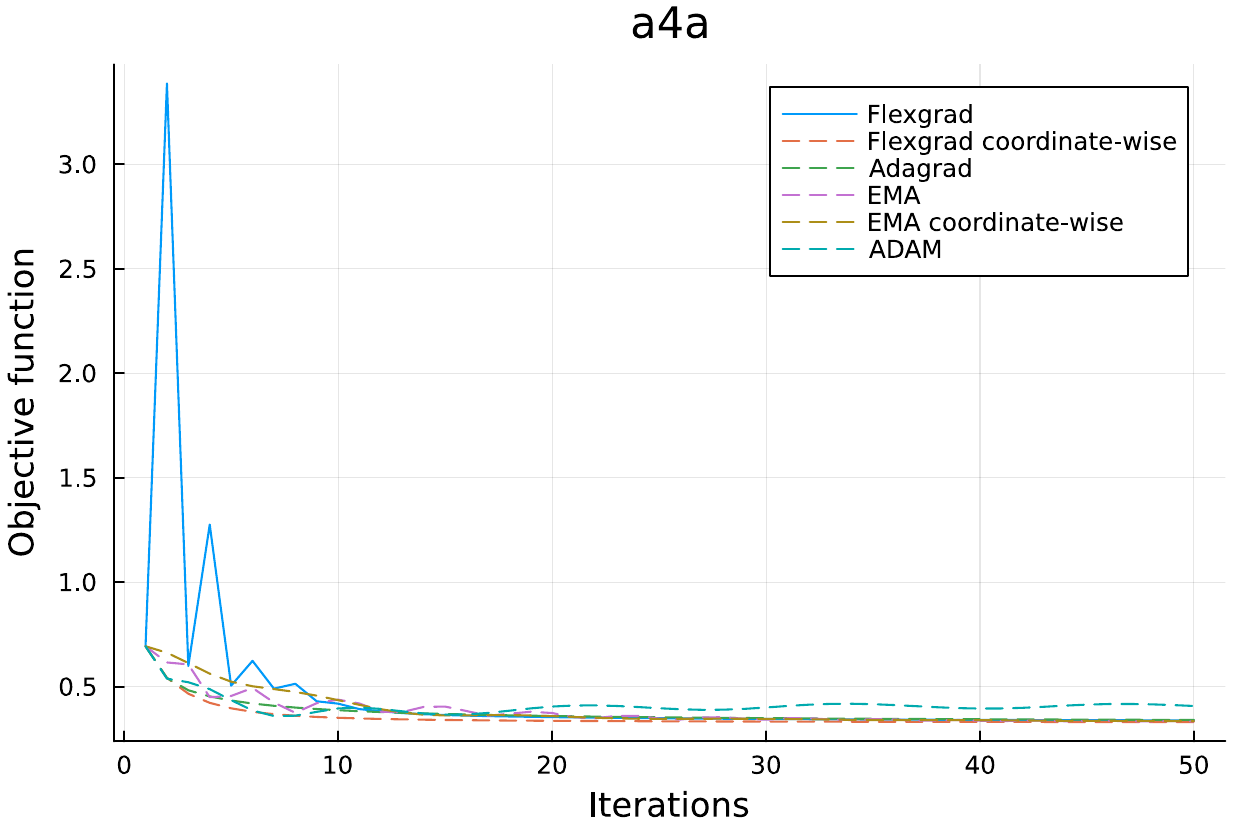}
        & \includegraphics[scale=\figscale]{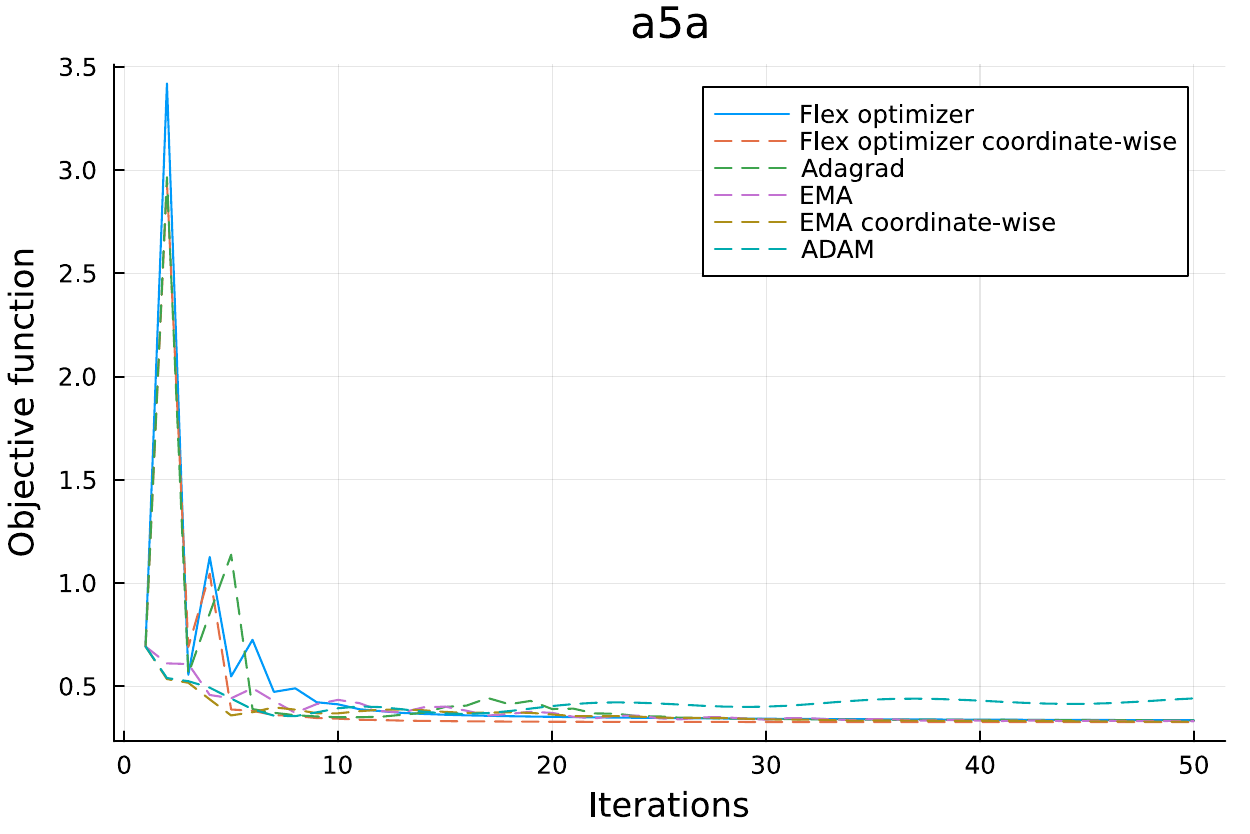}&
        \includegraphics[scale=\figscale]{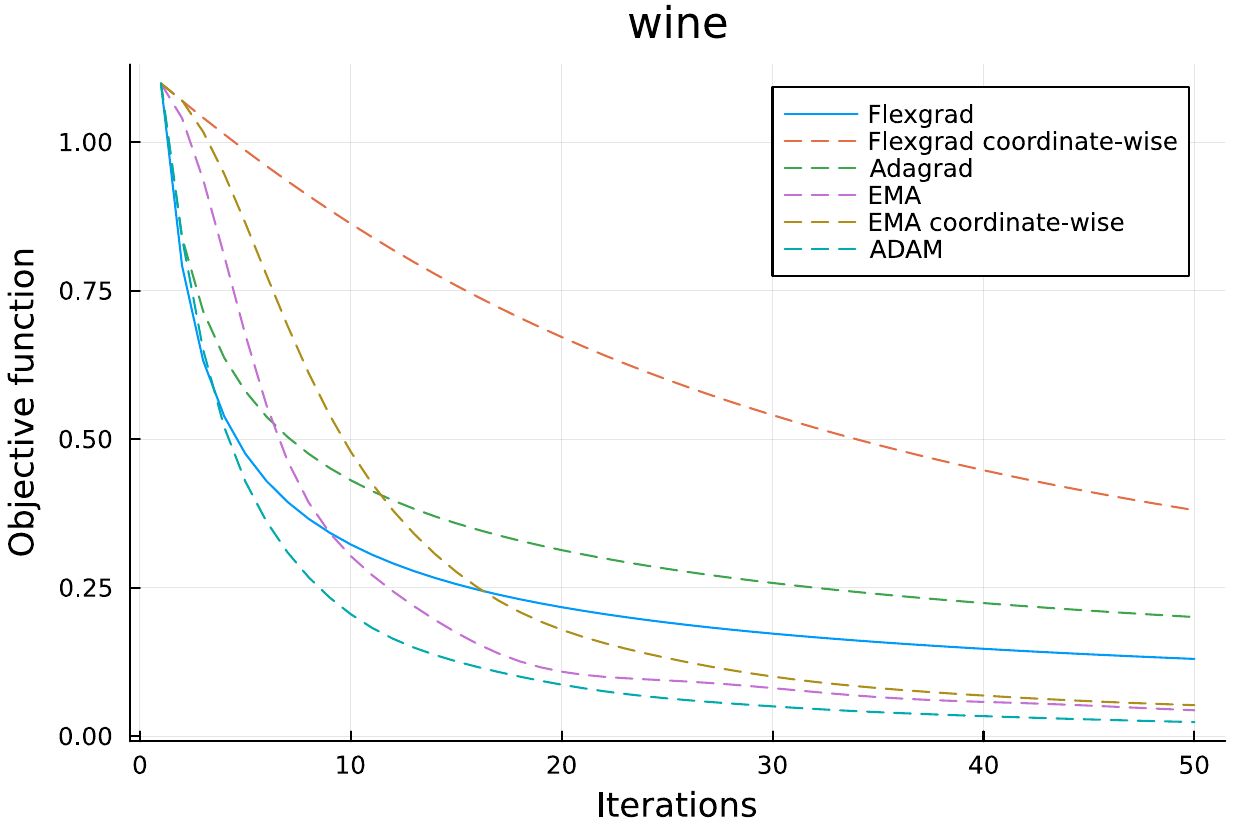}\\ 
        \includegraphics[scale=\figscale]{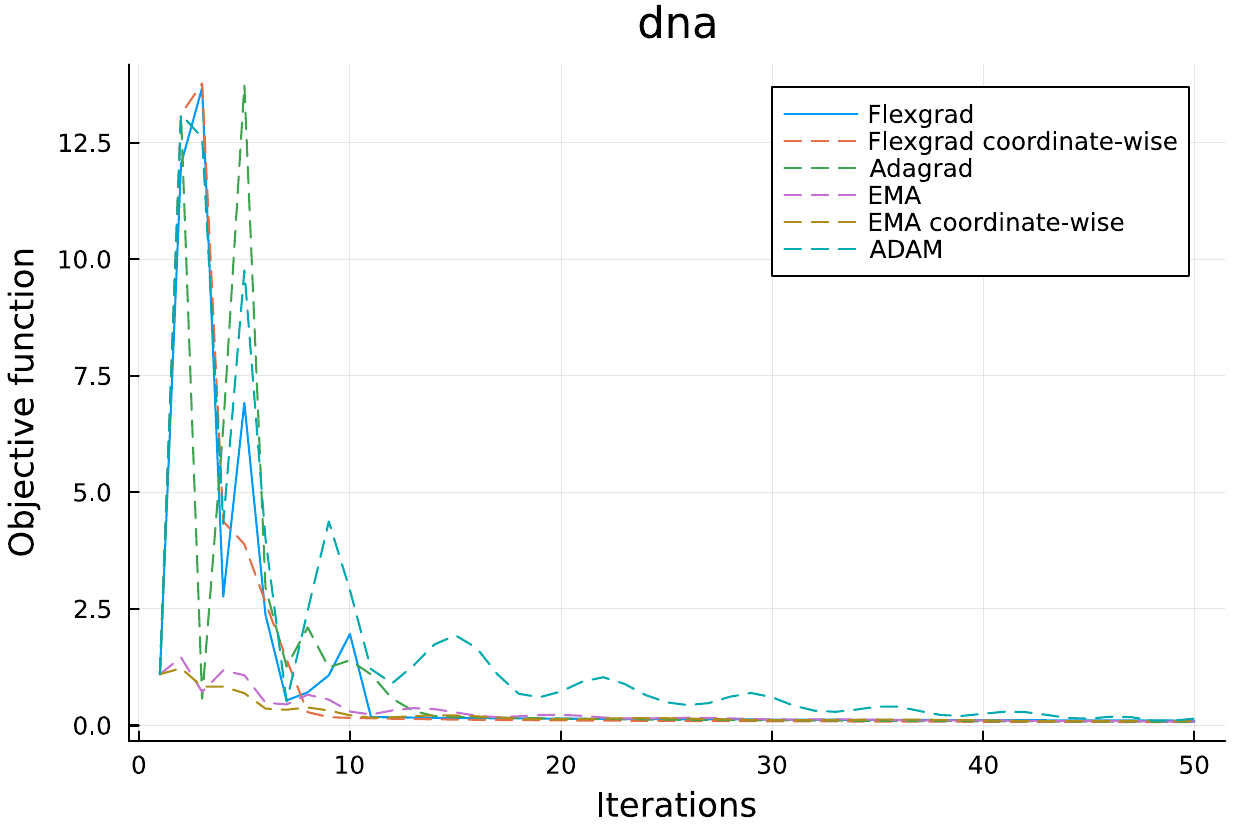}
        & \includegraphics[scale=\figscale]{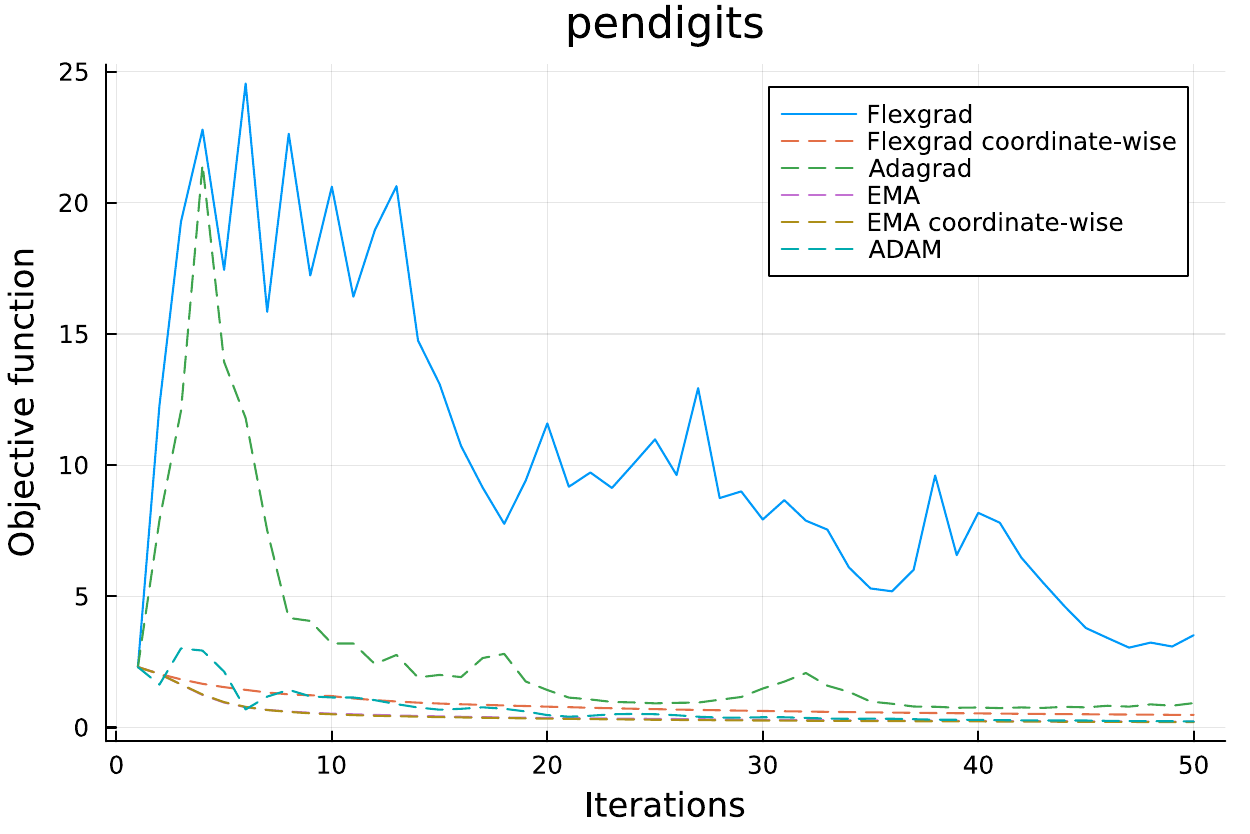}&
        \includegraphics[scale=\figscale]{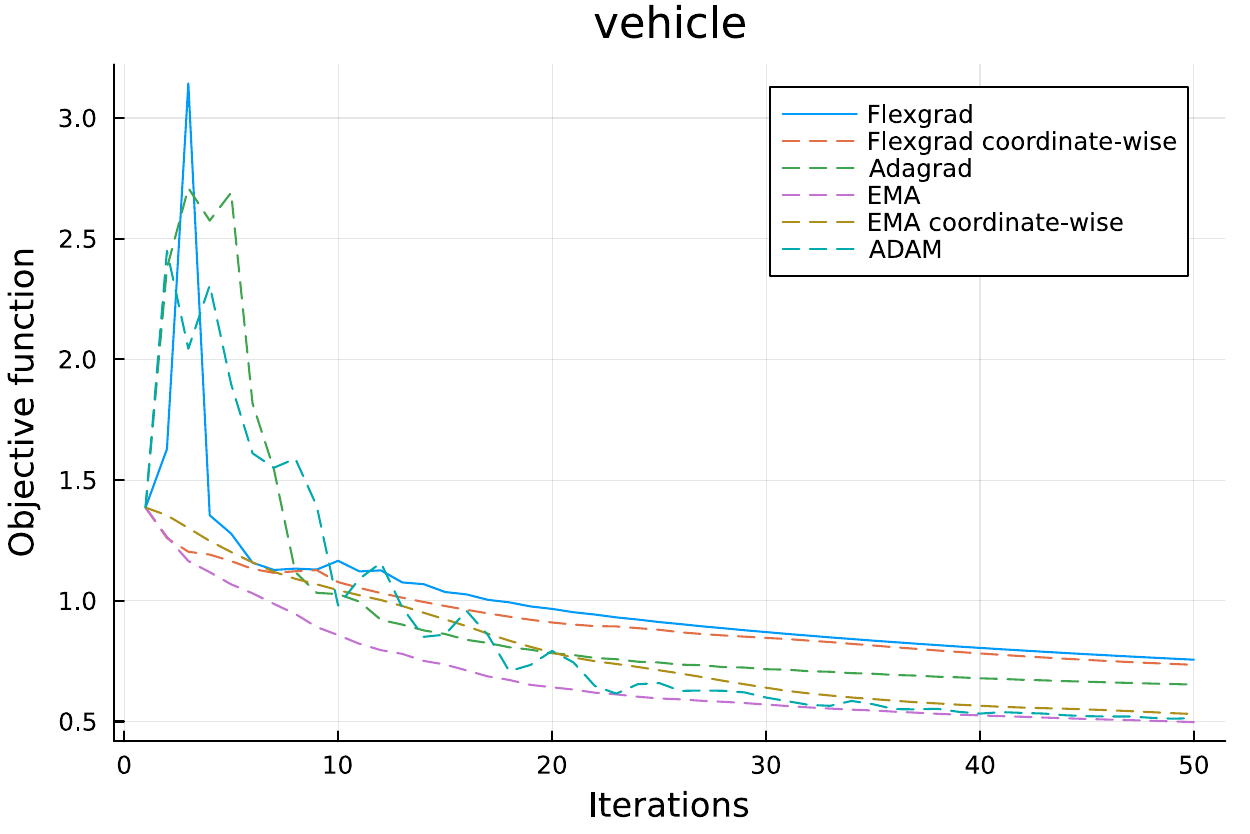}\\
        \includegraphics[scale=\figscale]{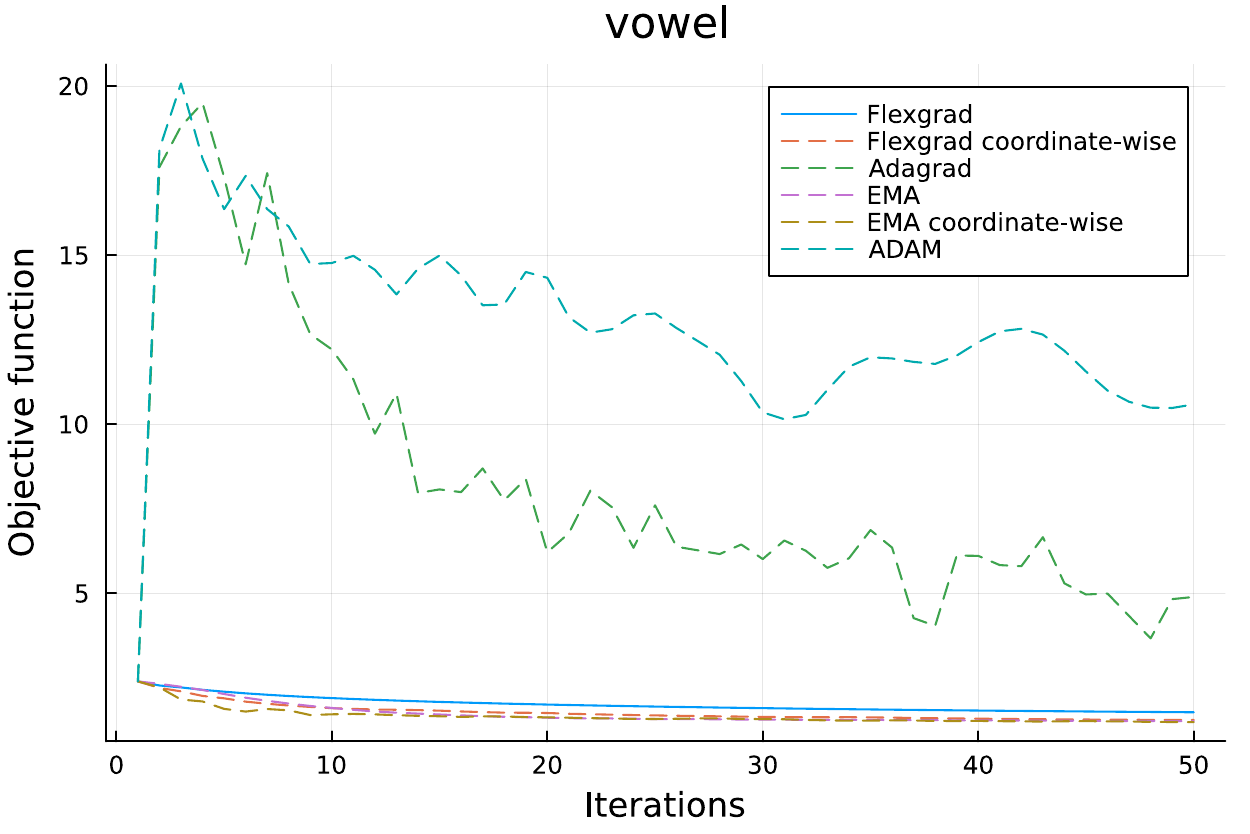}
        & \includegraphics[scale=\figscale]{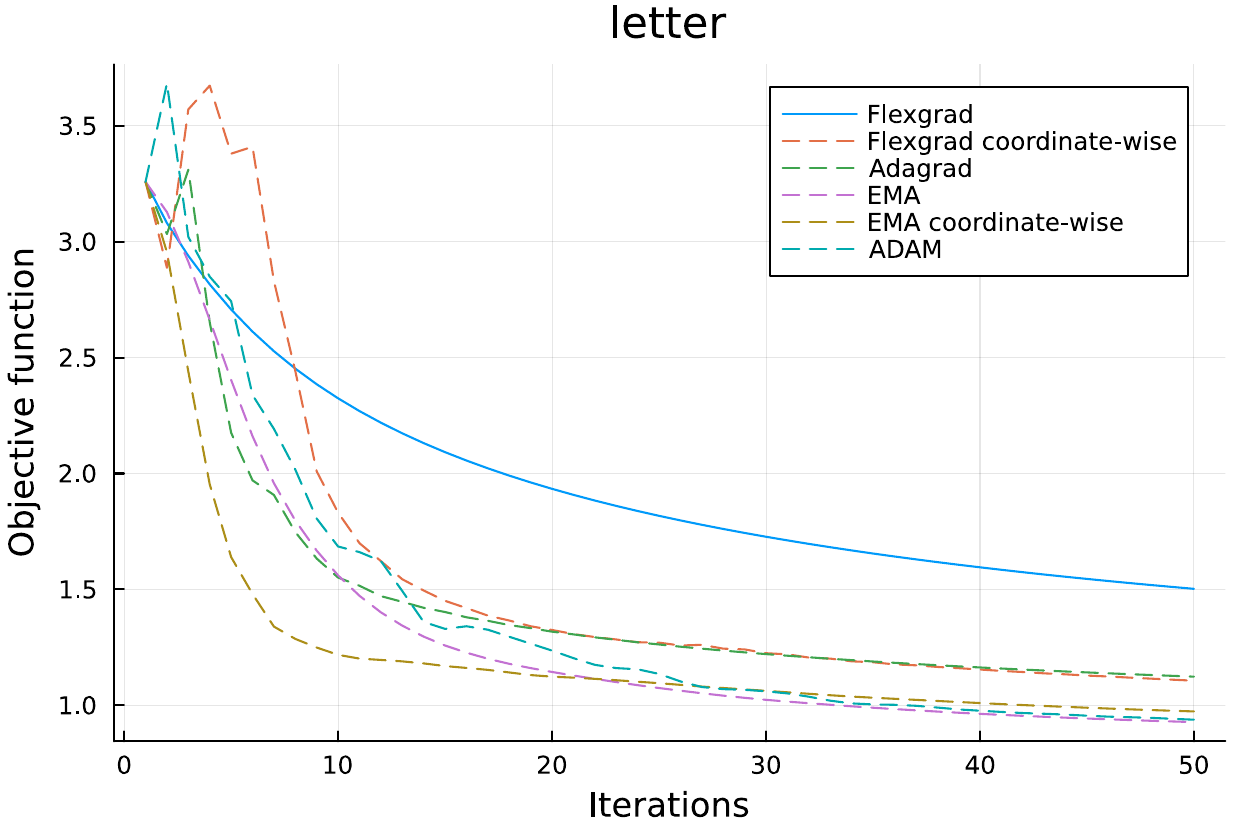}&
        \includegraphics[scale=\figscale]{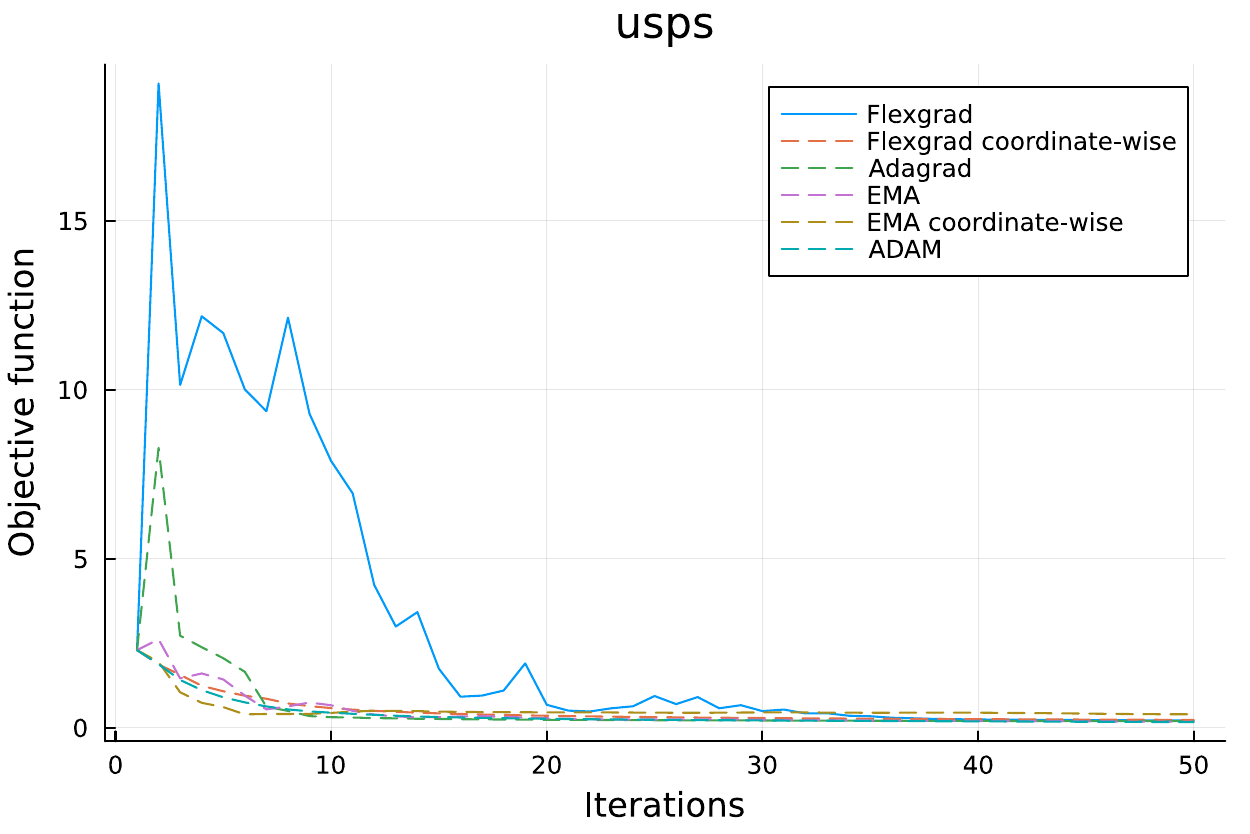}
     \end{tabular}
 \caption{Evolution of the objective function along iterations on 12 data sets for Flexgrad, Flexgrad coordinate-wise, Adagrad, EMA, EMA coordinate-wise, ADAM.}
\label{empprobmodelo}
\end{figure}

\begin{figure}[htbp]
    \centering
    \begin{tabular}{ccc}
  \includegraphics[scale=\figscale]{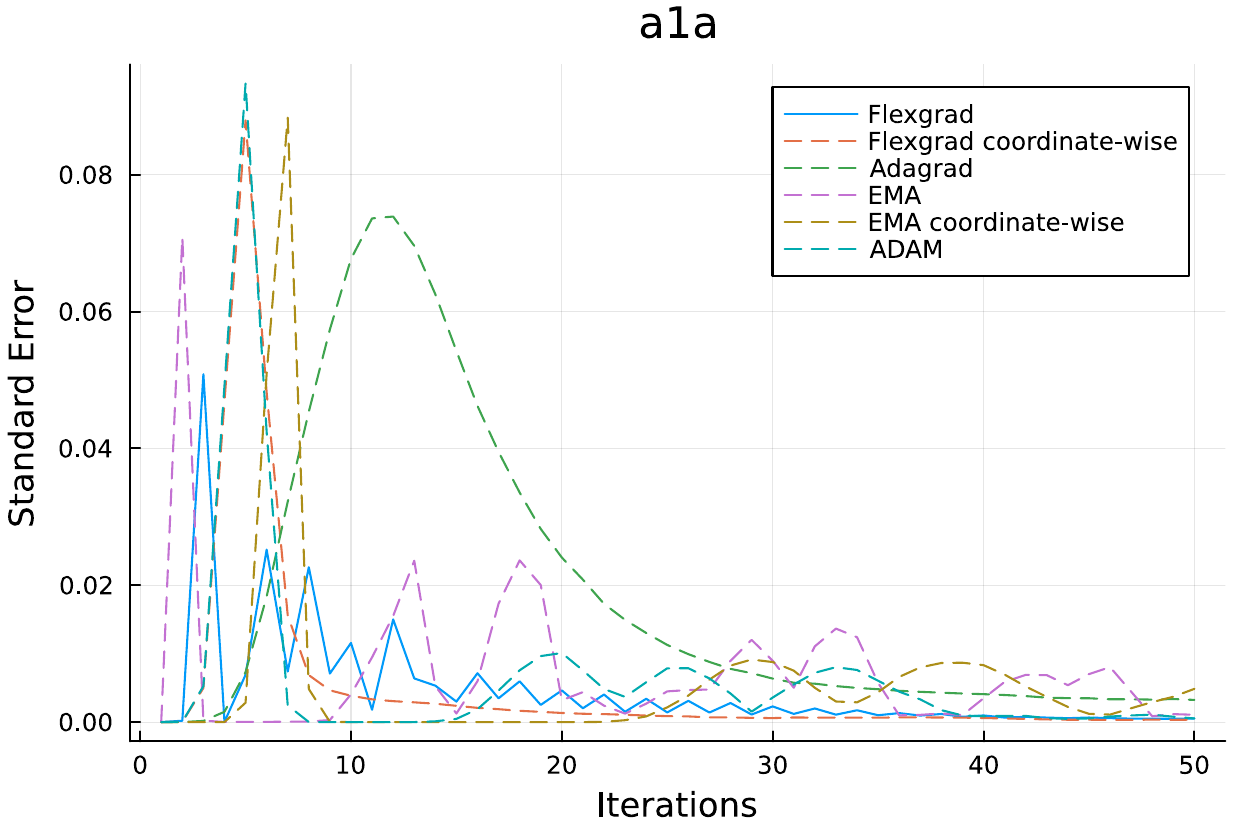}
        & \includegraphics[scale=\figscale]{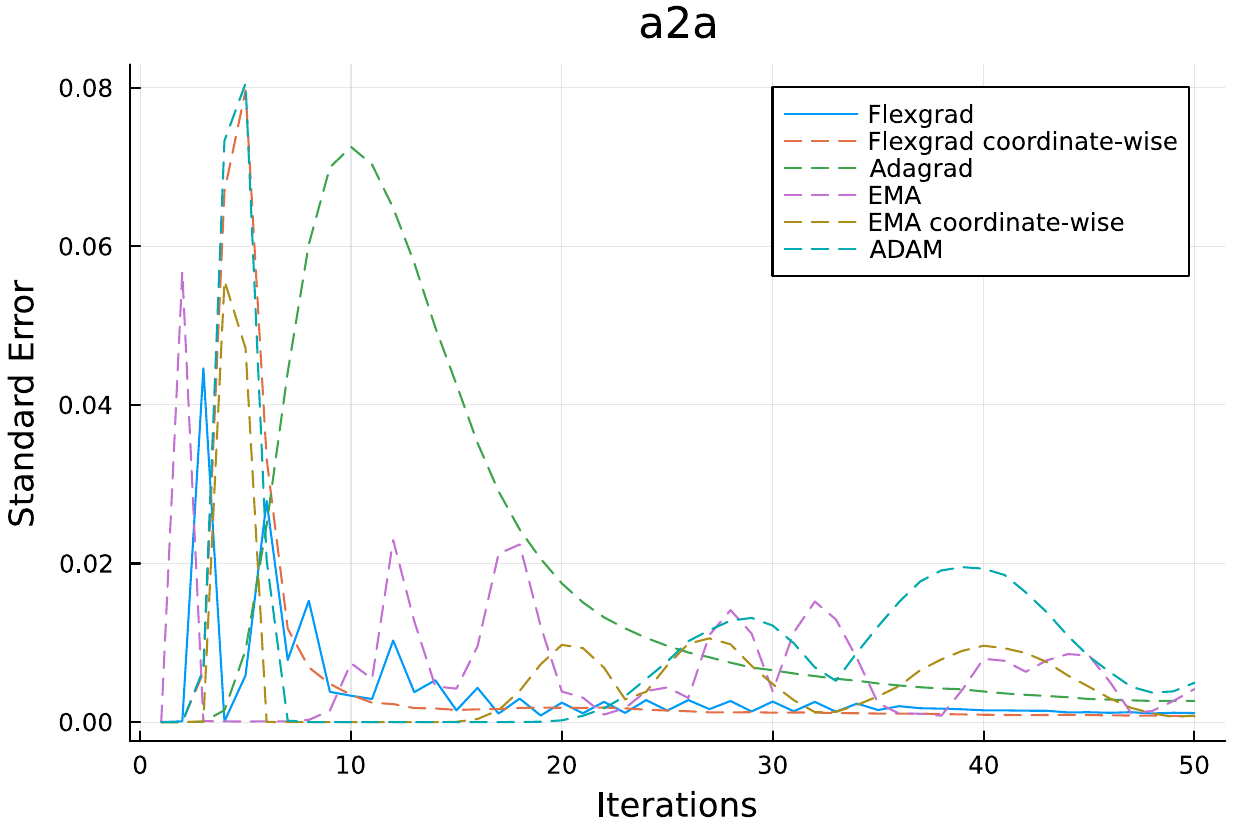}&
        \includegraphics[scale=\figscale]{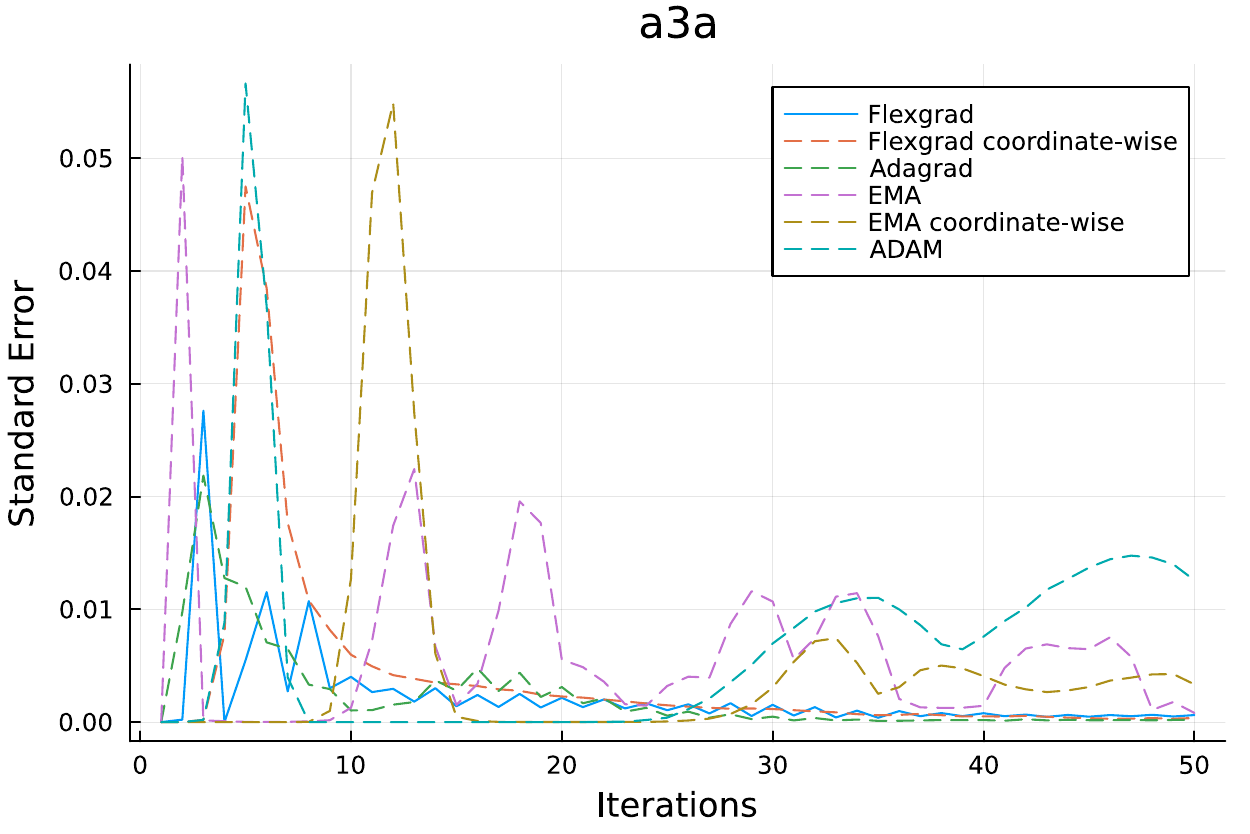}\\ 
        \includegraphics[scale=\figscale]{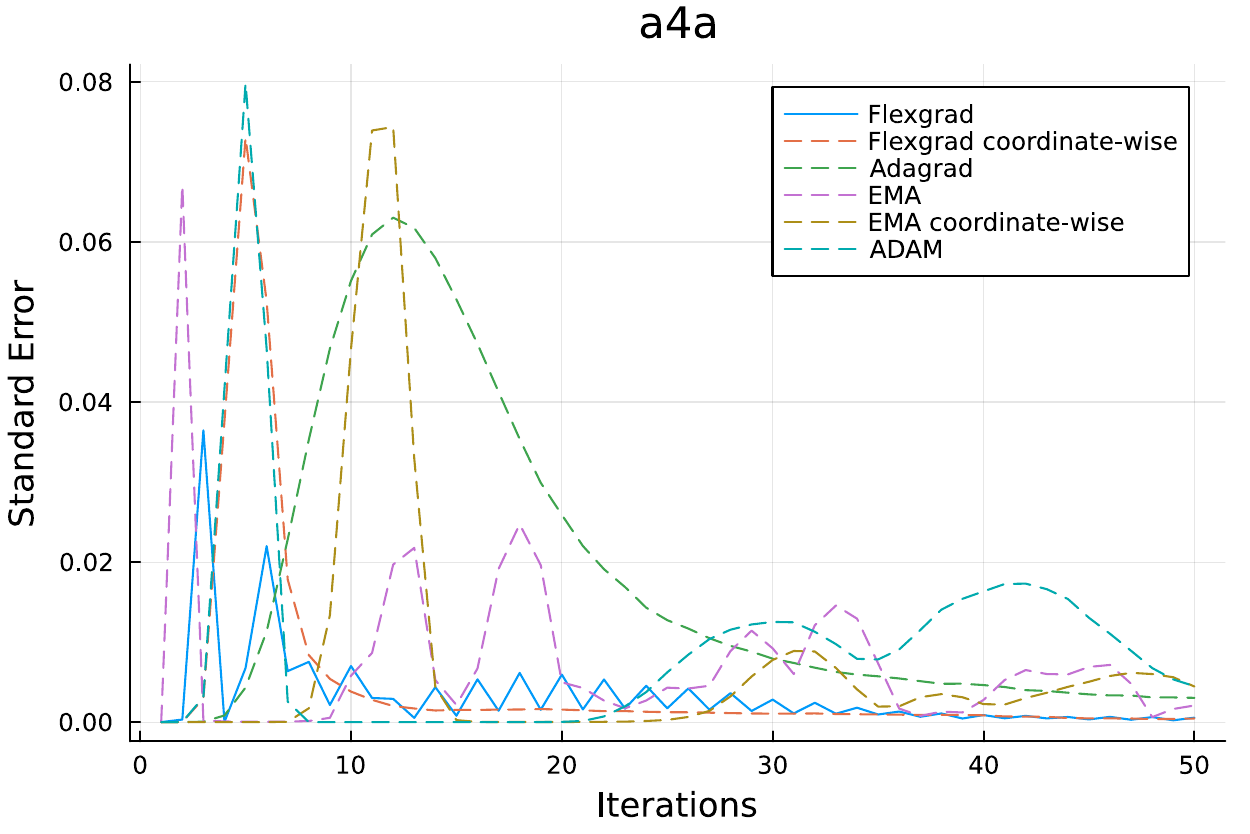}
        & \includegraphics[scale=\figscale]{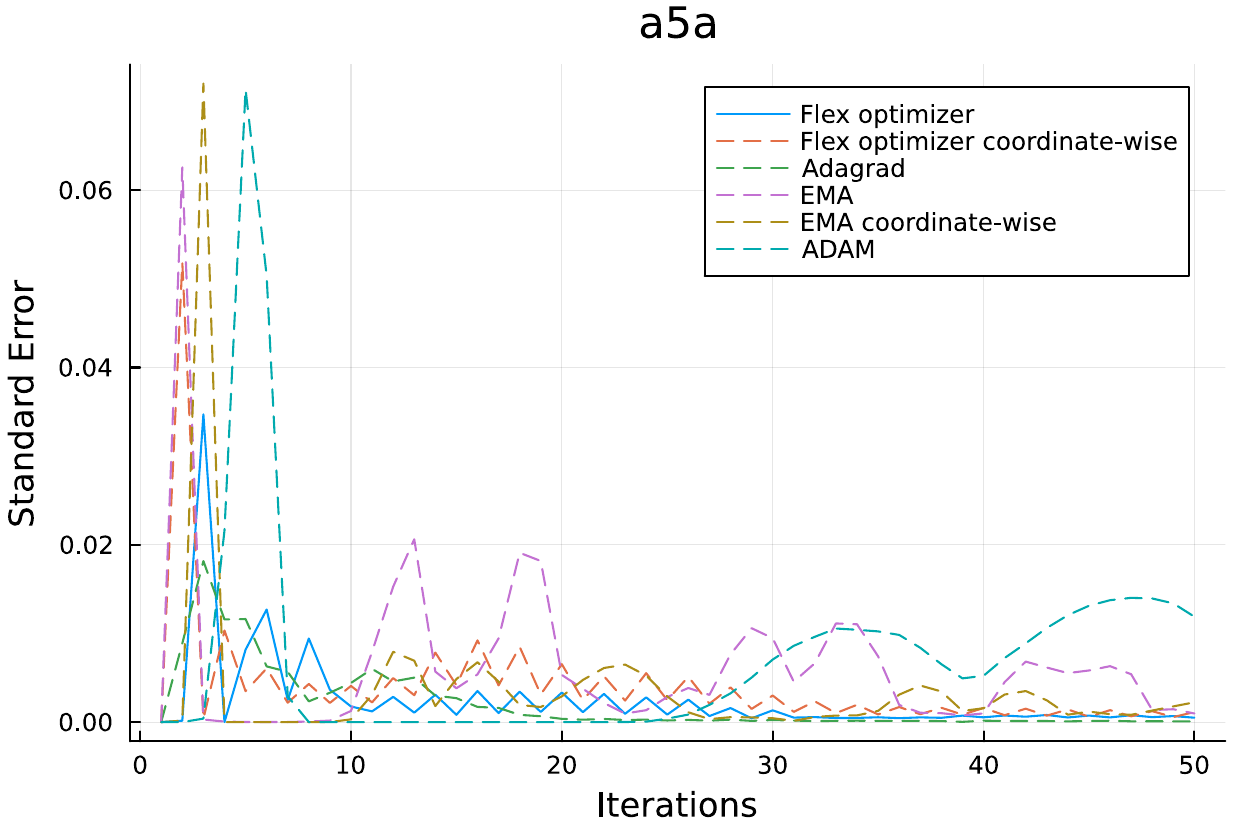}&
        \includegraphics[scale=\figscale]{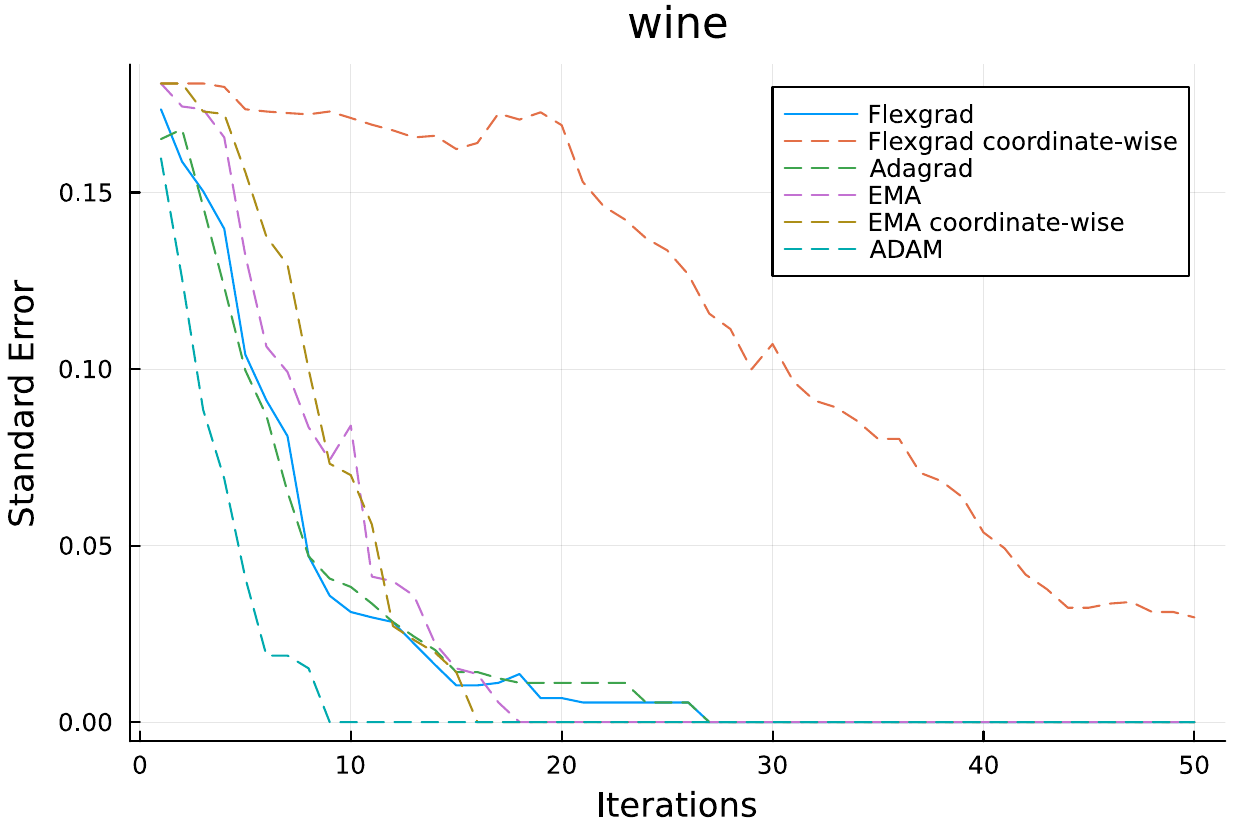}\\ 
        \includegraphics[scale=\figscale]{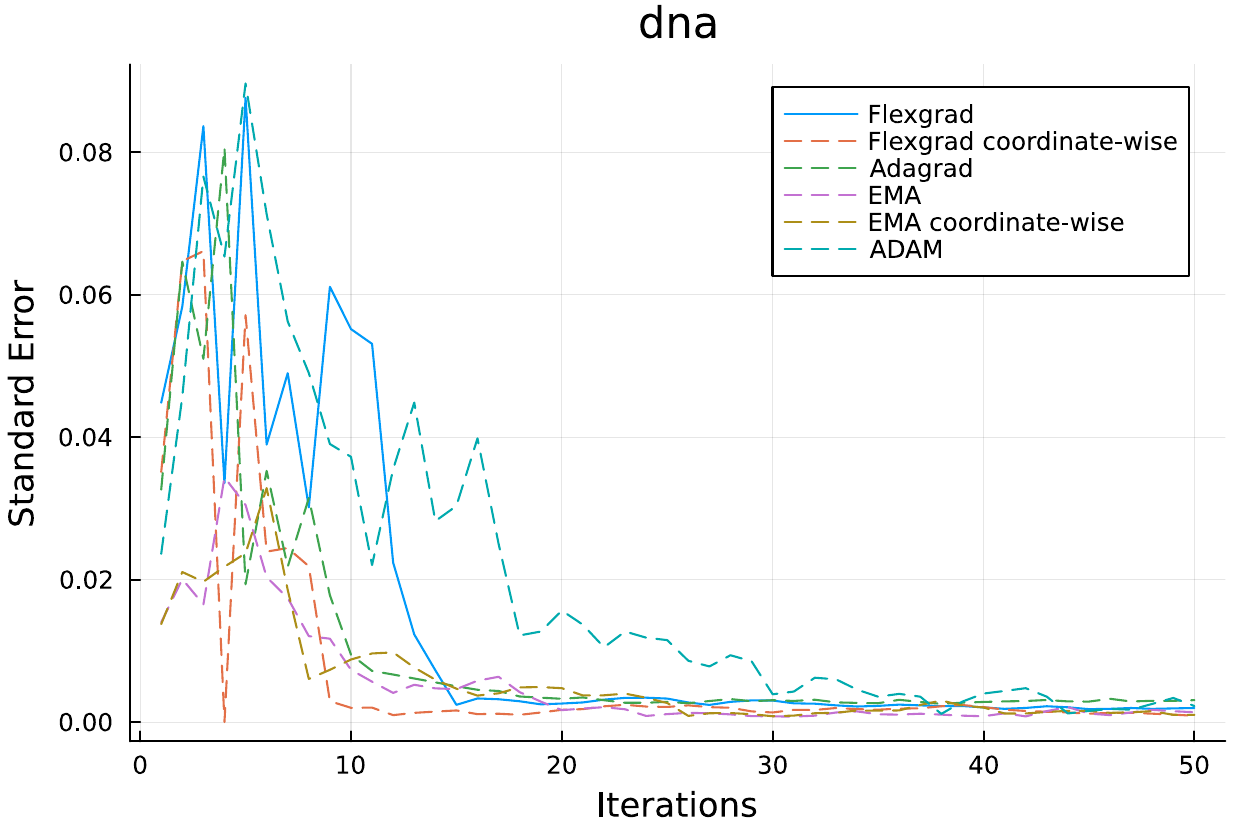}
        & \includegraphics[scale=\figscale]{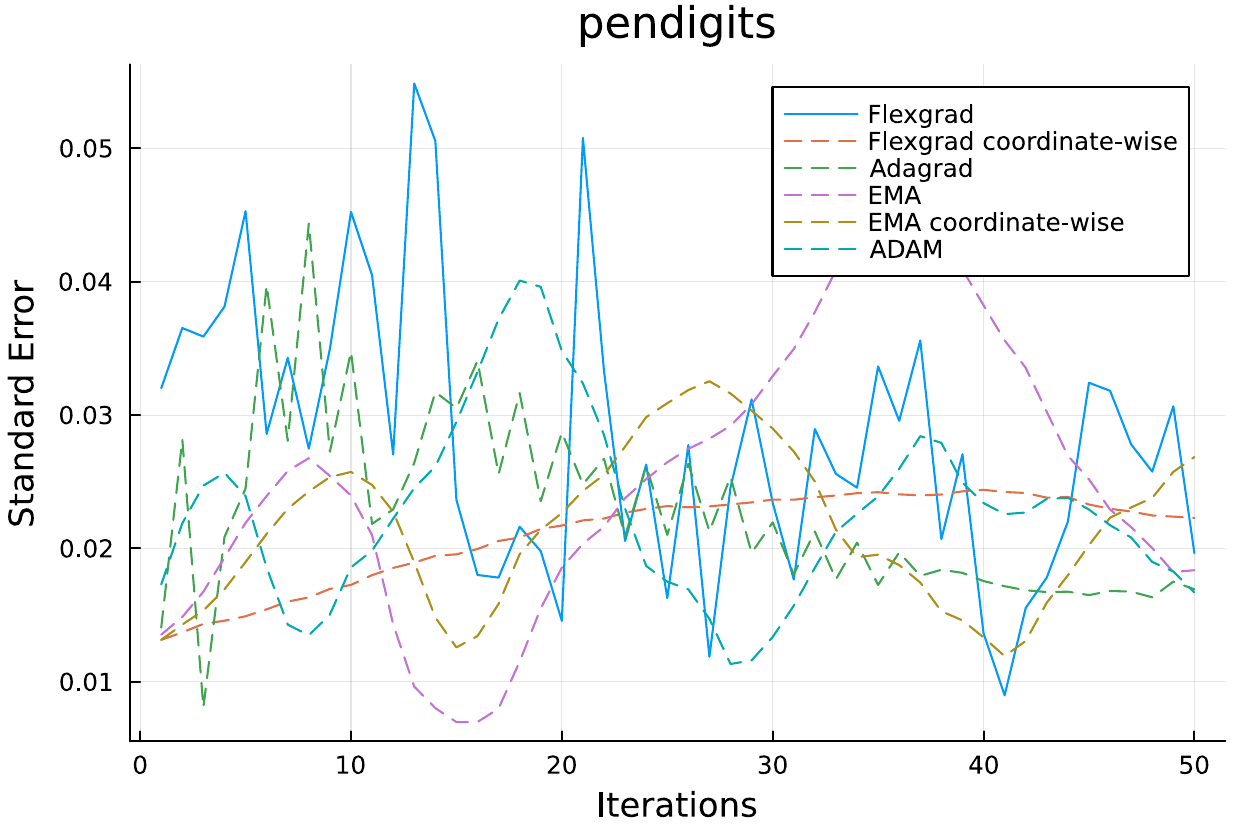}&
        \includegraphics[scale=\figscale]{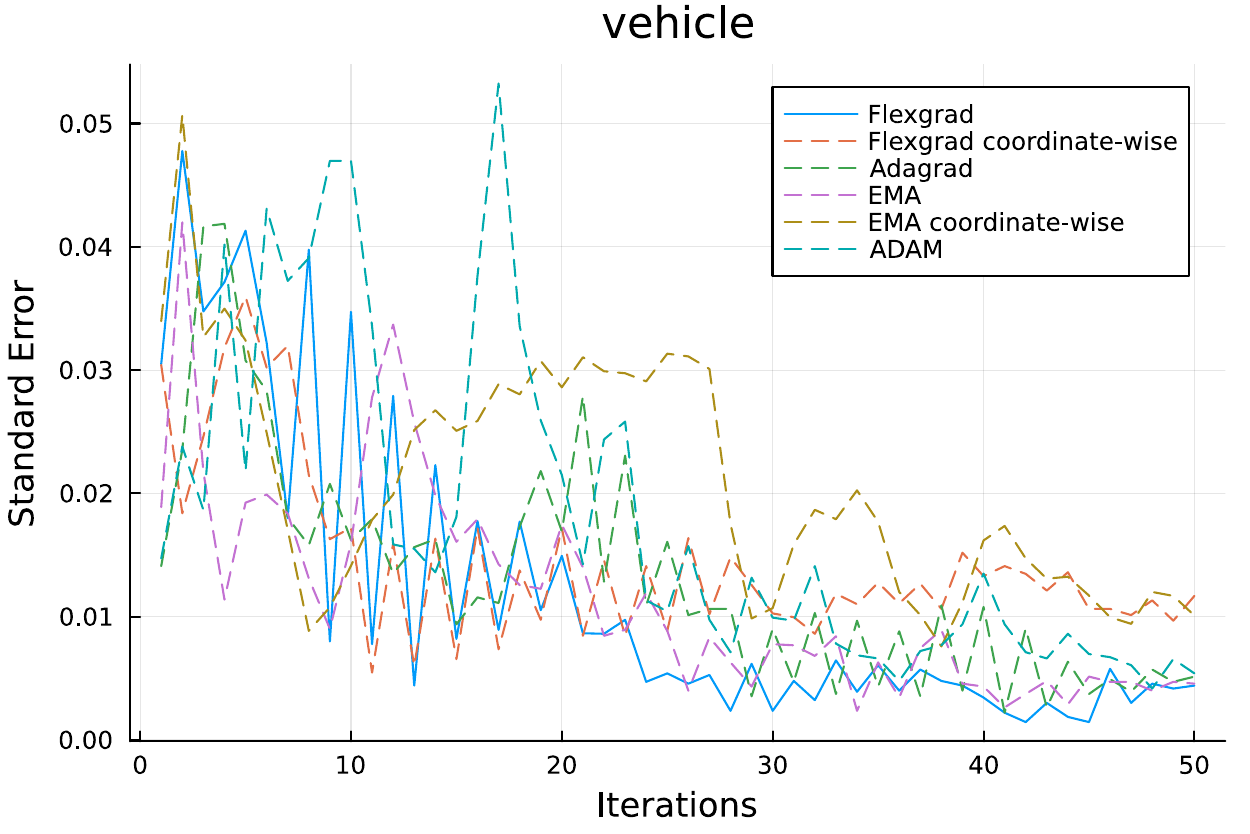}\\
        \includegraphics[scale=\figscale]{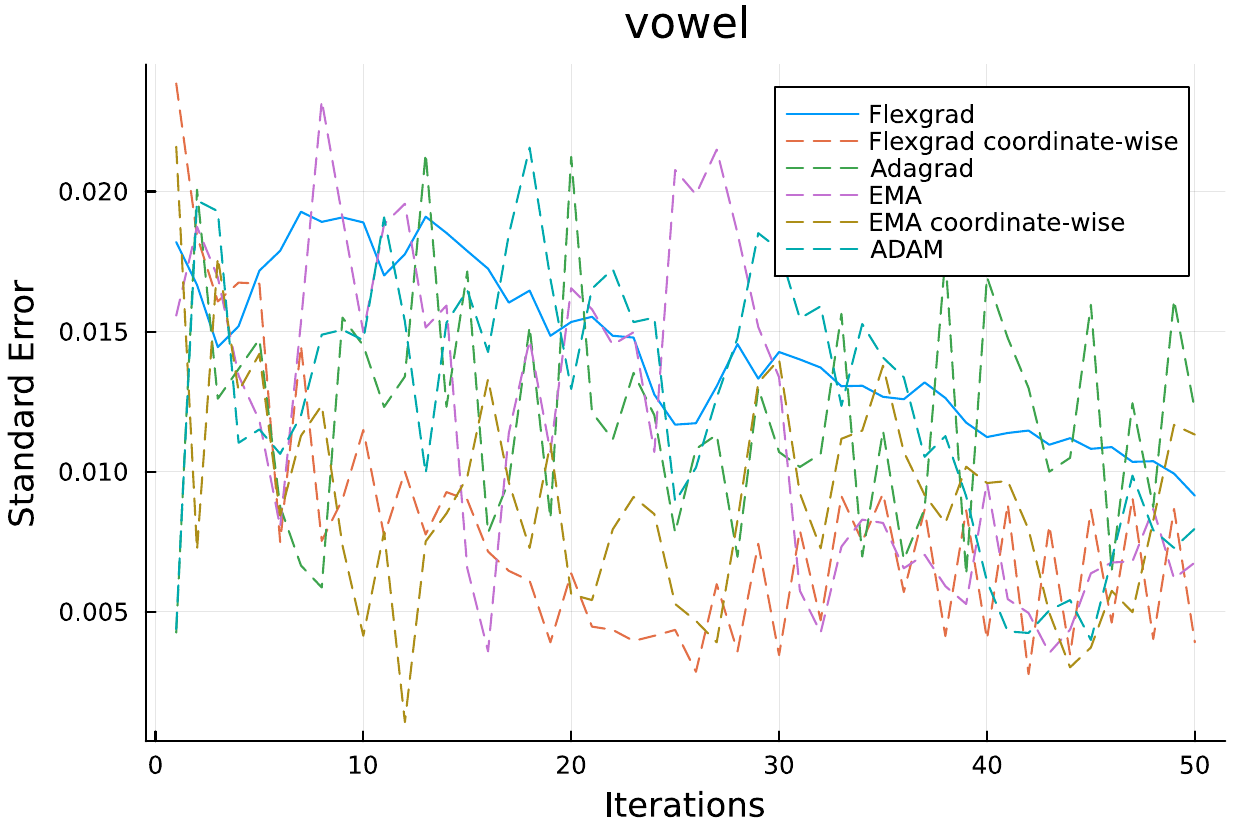}
        & \includegraphics[scale=\figscale]{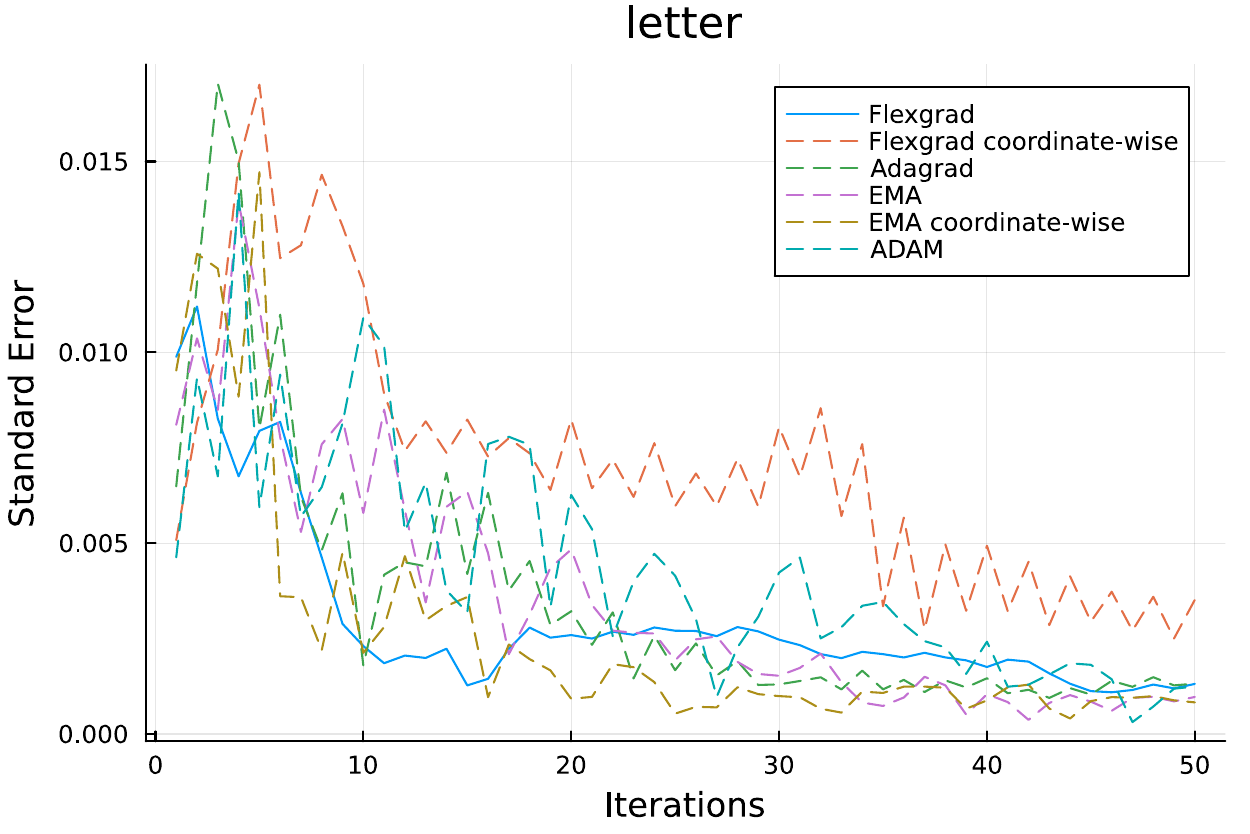}&
        \includegraphics[scale=\figscale]{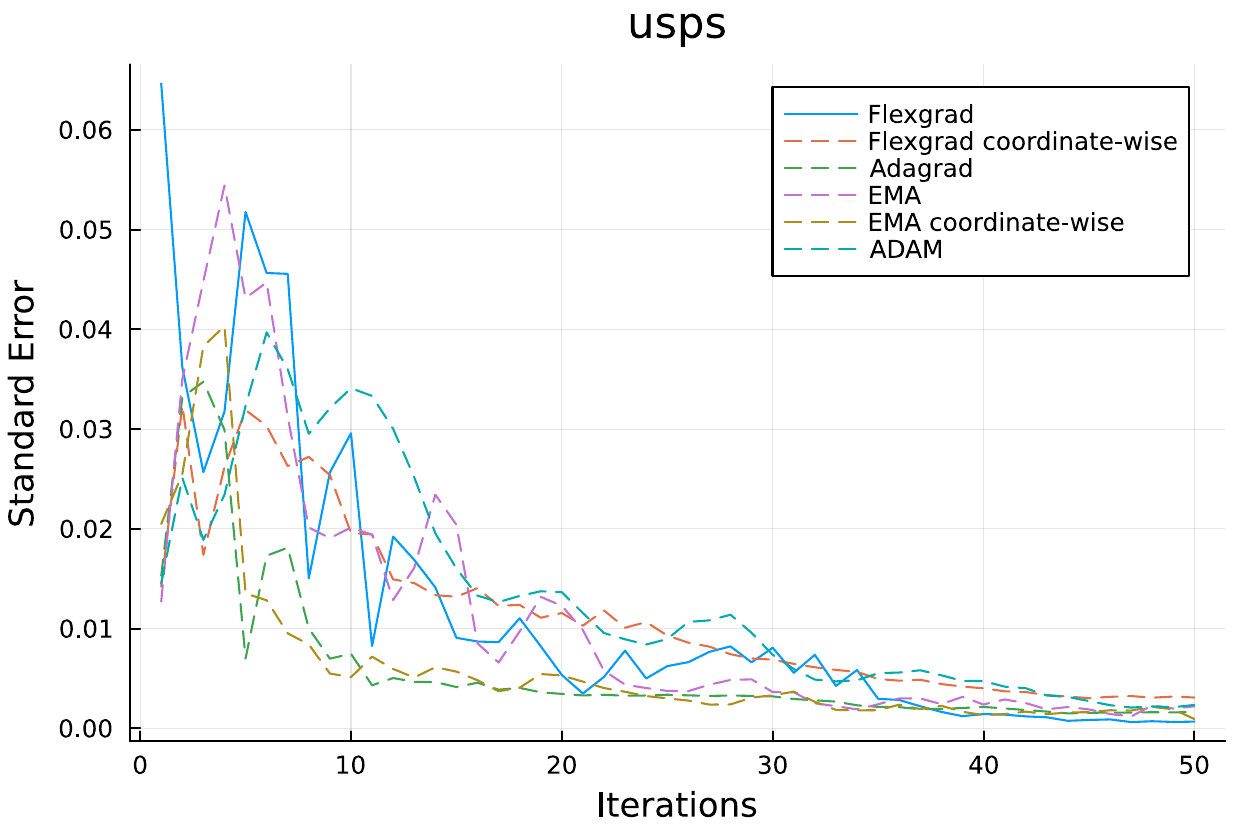}
     \end{tabular}
 \caption{Evolution of the standard deviation of the accuracy along iterations on 12 data sets for Flexgrad, Flexgrad coordinate-wise, Adagrad, EMA, EMA coordinate-wise, ADAM.}
\label{empprobmodels}
\end{figure}



\end{document}